\documentclass[11pt,onecolumn]{article}

\usepackage{a4wide}
\usepackage{amsmath,amsthm,epsfig,amssymb,amsbsy,mathtools}
\usepackage{enumerate}
\usepackage{comment}
\usepackage{algorithm}
\usepackage{algorithmic}
\usepackage{amsfonts}       
\usepackage{dsfont}
\usepackage{enumitem}
\usepackage{amssymb}

\usepackage{threeparttable}
\usepackage{chngcntr}
\usepackage{tikz}
\usepackage{tikz-3dplot}
\usepackage{pgfplots}
\usetikzlibrary{backgrounds}
\usepackage{makecell}
\usepackage{tabularx}
\usepackage{booktabs}
\usepackage{microtype}
\usepackage{subcaption}

\usepackage[giveninits=true,maxbibnames=9, maxcitenames=2, backend=biber]{biblatex}
\newenvironment{keywords}{\begin{paragraph}{Keywords:}
}
{
\end{paragraph}
}
    
\newenvironment{subclass}{\begin{paragraph}{AMS Subject Classification:}
}
{\end{paragraph}
}

\DeclareMathOperator*{\argmin}{arg\,min}

\DeclareMathOperator{\dom}{dom}
\DeclareMathOperator{\diag}{Diag}

\DeclareMathOperator{\intr}{int}
\DeclareMathOperator{\bdry}{bdry}

\DeclareMathOperator{\ran}{rge}

\DeclareMathOperator{\sign}{sign}

\DeclareMathOperator{\id}{id}
\DeclareMathOperator{\Diag}{Diag}
\DeclareMathOperator{\rank}{rank}
\DeclareMathOperator{\conv}{conv}
\DeclareMathOperator{\trace}{trace}

\newcommand{\bR}{\mathbb{R}}

\newcommand{\expec}{\mathbb{E}}

\newcommand{\bN}{\mathbb{N}}

\newcommand{\exR}{\overline{\mathbb{R}}}

\DeclareMathOperator{\gap}{gap}

\newcommand{\colfwd}{blue!70!black}
\newcommand{\colbck}{red!70!black}

\newcommand{\cC}{\mathcal{C}}

\newcommand{\cG}{\mathcal{G}}

\newcolumntype{Y}{>{\centering\arraybackslash}X}

\makeatletter
\newcommand{\prox}[3][\@nil]{%
  \def\tmp{#1}%
   \ifx\tmp\@nnil
       \operatorname{prox}_{#3}^{#2}
    \else
         \operatorname{prox}_{#3}^{#1 \star #2}
    \fi}

\newcommand{\bprox}[3][\@nil]{%
  \def\tmp{#1}%
   \ifx\tmp\@nnil
       \operatorname{bprox}_{#3}^{#2}
    \else
        \operatorname{bprox}_{#3}^{#1 #2}
    \fi}
\makeatother

\usepackage{hyperref}
\hypersetup{
    colorlinks=true,
    linkcolor=blue,
    filecolor=magenta,
    citecolor =magenta,  
    urlcolor=magenta,
    pdftitle={Constrained Stochastic Spectral Preconditioning}
    }

\usepackage[capitalize,nameinlink]{cleveref}[0.19]

\usepackage{aliascnt}

\crefname{section}{Section}{Sections}
\crefname{subsection}{Subsection}{Subsections}
\Crefname{section}{Section}{Sections}
\Crefname{subsection}{Subsection}{Subsections}

\Crefname{figure}{Figure}{Figures}

\crefformat{equation}{\textup{#2(#1)#3}}
\crefrangeformat{equation}{\textup{#3(#1)#4--#5(#2)#6}}
\crefmultiformat{equation}{\textup{#2(#1)#3}}{ and \textup{#2(#1)#3}}
{, \textup{#2(#1)#3}}{, and \textup{#2(#1)#3}}
\crefrangemultiformat{equation}{\textup{#3(#1)#4--#5(#2)#6}}%
{ and \textup{#3(#1)#4--#5(#2)#6}}{, \textup{#3(#1)#4--#5(#2)#6}}{, and \textup{#3(#1)#4--#5(#2)#6}}

\Crefformat{equation}{#2Equation~\textup{(#1)}#3}
\Crefrangeformat{equation}{Equations~\textup{#3(#1)#4--#5(#2)#6}}
\Crefmultiformat{equation}{Equations~\textup{#2(#1)#3}}{ and \textup{#2(#1)#3}}
{, \textup{#2(#1)#3}}{, and \textup{#2(#1)#3}}
\Crefrangemultiformat{equation}{Equations~\textup{#3(#1)#4--#5(#2)#6}}%
{ and \textup{#3(#1)#4--#5(#2)#6}}{, \textup{#3(#1)#4--#5(#2)#6}}{, and \textup{#3(#1)#4--#5(#2)#6}}

\newtheorem{theorem}{Theorem}[section]

\newaliascnt{lemma}{theorem}
\newtheorem{lemma}[lemma]{Lemma}
\aliascntresetthe{lemma}

\newaliascnt{proposition}{theorem}
\newtheorem{proposition}[proposition]{Proposition}
\aliascntresetthe{proposition}

\newaliascnt{corollary}{theorem}
\newtheorem{corollary}[corollary]{Corollary}
\aliascntresetthe{corollary}

\newaliascnt{assumption}{theorem}
\newtheorem{assumption}[assumption]{Assumption}
\aliascntresetthe{assumption}

\newaliascnt{definition}{theorem}
\newtheorem{definition}[definition]{Definition}
\aliascntresetthe{definition}

\newaliascnt{fact}{theorem}

\aliascntresetthe{fact}

\newtheoremstyle{boldremark}
    {\dimexpr\topsep/2\relax} 
    {\dimexpr\topsep/2\relax} 
    {}          
    {}          
    {\bfseries} 
    {.}         
    {.5em}      
    {}          

\theoremstyle{boldremark}
\newaliascnt{remark}{theorem}
\newtheorem{remark}[remark]{Remark}
\aliascntresetthe{remark}

\theoremstyle{boldremark}
\newaliascnt{example}{theorem}
\newtheorem{example}[example]{Example}
\aliascntresetthe{example}

\crefname{theorem}{Theorem}{Theorems}
\Crefname{theorem}{Theorem}{Theorems}
\crefname{lemma}{Lemma}{Lemmas}
\Crefname{lemma}{Lemma}{Lemmas}
\crefname{proposition}{Proposition}{Propositions}
\Crefname{proposition}{Proposition}{Propositions}
\crefname{corollary}{Corollary}{Corollaries}
\Crefname{corollary}{Corollary}{Corollaries}
\crefname{definition}{Definition}{Definitions}
\Crefname{definition}{Definition}{Definitions}
\crefname{fact}{Fact}{Facts}
\Crefname{fact}{Fact}{Facts}
\crefname{remark}{Remark}{Remarks}
\Crefname{remark}{Remark}{Remarks}
\crefname{example}{Example}{Examples}
\Crefname{example}{Example}{Examples}
\crefname{assumption}{Assumption}{Assumptions}
\Crefname{assumption}{Assumption}{Assumptions}

\newlist{lemenum}{enumerate}{1} 
\setlist[lemenum]{label=(\roman*), ref=\thelemma(\roman*), font=\rm}
\crefalias{lemenumi}{lemma}

\newlist{propenum}{enumerate}{1} 
\setlist[propenum]{label=(\roman*), ref=\theproposition(\roman*), font=\rm}
\crefalias{propenumi}{proposition}

\newlist{thmenum}{enumerate}{1} 
\setlist[thmenum]{label=(\roman*), ref=\thetheorem(\roman*), font=\rm}
\crefalias{thmenumi}{theorem}
\counterwithin*{thmenumi}{theorem}

\newlist{defenum}{enumerate}{1} 
\setlist[defenum]{label=(\roman*), ref=\thedefinition(\roman*), font=\rm}
\crefalias{defenumi}{definition}

\newlist{corenum}{enumerate}{1} 
\setlist[corenum]{label=(\roman*), ref=\thecorollary(\roman*), font=\rm} 
\crefalias{corenumi}{corollary}

\newlist{examplenum}{enumerate}{1} 
\setlist[examplenum]{label=(\roman*), ref=\theexample(\roman*), font=\rm} 
\crefalias{examplenumi}{example}

\usepackage{xcolor}

    \makeatletter
\def\@fnsymbol#1{\ensuremath{\ifcase#1\or \dagger\or \ddagger\or
   \mathsection\or \mathparagraph\or \|\or **\or \dagger\dagger
   \or \ddagger\ddagger \else\@ctrerr\fi}}
    \makeatother

\title{Constrained Stochastic Spectral Preconditioning Converges for Nonconvex Objectives}

\author{Konstantinos Oikonomidis$^*$\thanks{KU Leuven,
		Department of Electrical Engineering, STADIUS Center for Dynamical Systems, Signal Processing, and Data Analytics, 
		{\tt%
            \href{mailto:konstantinos.oikonomidis@esat.kuleuven.be}{\{konstantinos.oikonomidis,}%
            \href{mailto:jan.quan@esat.kuleuven.be}{jan.quan,}%
			\href{mailto:panos.patrinos@esat.kuleuven.be}{panos.patrinos\}}%
			\href{mailto:konstantinos.oikonomidis@esat.kuleuven.be,jan.quan@esat.kuleuven.be,panos.patrinos@esat.kuleuven.be}{@esat.kuleuven.be}%
		}
	} \and Jan Quan$^*$\footnotemark[1] \and Kimon Antonakopoulos\thanks{\'Ecole Polytechnique F\'ed\'erale de Lausanne (EPFL), Laboratory for Information and Inference Systems (LIONS)~
    {\tt%
    \href{mailto:kimon.antonakopoulos@epfl.ch}{\{kimon.antonakopoulos,}%
    \href{mailto:volkan.cevher@epfl.ch}{volkan.cevher\}}%
    \href{mailto:kimon.antonakopoulos@epfl.ch,volkan.cevher@epfl.ch}
    {@epfl.ch}%
    }
    }
    \and Antonio Silveti-Falls\thanks{CVN, CentraleSup\'elec, Universit\'e Paris-Saclay, Inria
    {\tt%
    \href{mailto:tonys.falls@gmail.com}{tonys.falls@gmail.com}
    }
    }
    \and Volkan Cevher$^\P$\footnotemark[2]
    \and Panagiotis Patrinos$^\P$\footnotemark[1]}

\pgfplotsset{compat=1.18}
\begin{document}

\maketitle

\def\thefootnote{*}\footnotetext{Equal contribution}\def\thefootnote{\arabic{footnote}}
\def\thefootnote{$\P$}\footnotetext{Equal supervision}\def\thefootnote{\arabic{footnote}}

\begin{abstract}
    In this work, we develop proximal preconditioned gradient methods with a focus on spectral gradient methods providing a proximal extension to the Muon and Scion optimizers. We introduce a family of stochastic algorithms that can handle a wide variety of convex and nonconvex constraints and study its convergence under heavy-tailed noise, through a novel analysis tailored to the geometry of the proposed methods. We further propose a variance-reduced version, which achieves faster convergence under standard noise assumptions. Finally, we show that the polynomial iterations used in Muon are more accurately captured by a nonlinear preconditioner than by the ideal matrix sign, leading to a convergence analysis that more faithfully reflects practical implementations.
\end{abstract}
\begin{keywords}
nonconvex optimization $\cdot$ stochastic optimization $\cdot$ spectral preconditioning
\end{keywords}
\begin{subclass}
65K05 $\cdot$ 49J52 $\cdot$ 90C30
\end{subclass}
\tableofcontents

\section{Introduction}
Stochastic spectral gradient methods \cite{carlson2015preconditioned,jordan2024muon,pethick2025trainingdeeplearningmodels} have received widespread attention recently due to their efficiency in training deep neural networks. Unlike classical stochastic gradient methods, which rely directly on noisy gradient estimates, spectral methods apply an approximation of the matrix sign \cite{amsel2025polar}, fundamentally altering the geometry of the algorithm and leading to improved performance. More precisely, if $D$ is an estimate of the gradient and $D = U \Sigma V^\top$ a reduced SVD, only the product $U V^\top$ is considered in the update.

This modification of the update direction places spectral gradient methods within a broader class of adaptive algorithms that normalize the gradients before updating the variables. Prominent examples include gradient clipping \cite{zhang2020gradientclippingacceleratestraining,koloskova2023revisiting}, normalized gradient \cite{cutkosky2020momentum} and sign gradient \cite{carlson2015preconditioned,bernstein2018signsgd} among others. These methods have been studied extensively not only for the practical benefit of stabilizing the training procedure, but also for their theoretical properties of adapting to generalized forms of smoothness \cite{zhang2020gradientclippingacceleratestraining,vankov2024optimizing} and handling heavy-tailed noise \cite{cutkosky2020momentum,hubler2024gradient}.

Despite this growing body of work, existing analyses have predominantly focused on unconstrained optimization involving a single smooth objective. As a result, comparatively little is known about the behavior of stochastic spectral and related adaptive gradient methods in the presence of constraints or explicit regularization. The few works that address such settings are restricted to convex nonsmooth terms \cite{chen2023lion,pmlr-v235-gorbunov24a,liu2024communication,pethick2025trainingdeeplearningmodels,kovalev2025understanding,chen2025muon}, most commonly indicator functions of closed, convex sets. Moreover, extending spectral or sign-based updates to constrained problems is not straightforward since naive application of standard proximal operators can result in fixed points that are unrelated to the original optimization objective~\cite{pmlr-v235-gorbunov24a}.

However, regularization and constraints play an important role in machine learning practice. Except for standard practices like $\ell_1$ and $\ell_2$ regularization \cite{shalev2014understanding} that are linked to the popular weight decay mechanism, recent works consider putting hard constraints on the weights.  Prominent examples include constraints based on the spectral norm \cite{newhouse2025training,xie2026controlled} and the Euclidean norm \cite{karras2024analyzing,loshchilov2024ngpt}, which restrict weights to the corresponding norm ball or its boundary, while empirical evidence suggests that in many neural network training tasks norm control improves performance \cite{pethick2025trainingdeeplearningmodels}. In this work we therefore aim at bridging this gap by introducing proximal preconditioned gradient methods that can naturally tackle composite minimization problems.

\paragraph{Contributions}
More concretely, our contributions can be summarized as follows.
\begin{itemize}
    \item In the deterministic setting, we extend the analysis of \cite{laude2025anisotropic} which leads to a preconditioned proximal gradient method for problems where the smooth term satisfies a relaxed smoothness assumption and the nonsmooth term is potentially nonconvex. Our analysis can be found in \cref{app:det}. We moreover extend to the matrix setup where we use tools from convex analysis to obtain closed forms of the generalized prox operators we study in \cref{sec:matrix_ext}. Some important backward steps are summarized in \cref{tab:back_step_euc}, which practitioners can utilize to explore different combinations of algorithms and constraints.

    \item We propose a stochastic proximal preconditioned gradient algorithm for composite minimization problems where the nonsmooth term can be nonconvex, including thus the indicators of interesting constraint sets such as the spectral sphere and the Stiefel manifold. We study the convergence of the method under heavy-tailed noise and obtain improved convergence guarantees using variance-reduction techniques under bounded variance in \cref{sec:stoch}. To the best of our knowledge, this is the first work on stochastic methods for problems with general nonsmooth terms $g$ and under heavy-tailed noise. Moreover, our analysis is unifying, covering a whole family of algorithms, and the proposed methods are adaptive, requiring no prior knowledge of smoothness or noise constants.
    
    \item As a separate contribution, we observe that nonlinear preconditioning more faithfully captures the spectral gradient update arising in polynomial approximations (e.g., Newton--Schulz, Polar Express) of the matrix sign function, see \cref{fig:pe_approx}. We analyze the properties in both deterministic and stochastic regimes through a novel analysis in \cref{subsec:inexact_main_text} and \cref{app:polar_full}. Notably, we are also able to capture the pre-normalization step prior to the polynomial iteration and thus more accurately describe the convergence of Muon and related spectral algorithms.
\end{itemize}

\begin{figure}[hbpt]
    \centering
    \begin{subfigure}[b]{0.48\textwidth}
        \centering
        \includegraphics[width=0.8\linewidth]{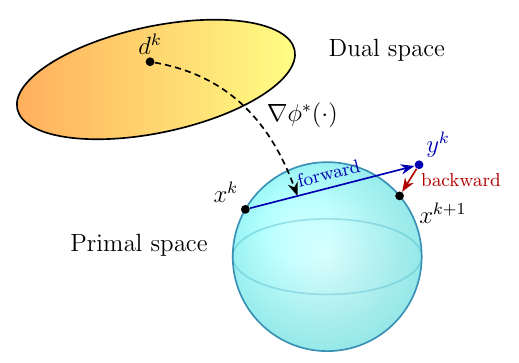}
        \caption{Visualization algorithm.}
        \label{fig:fwd_bck_vis}
    \end{subfigure}
    \hfill
    \begin{subfigure}[b]{0.48\textwidth}
        \centering
        \includegraphics[width=0.8\linewidth]{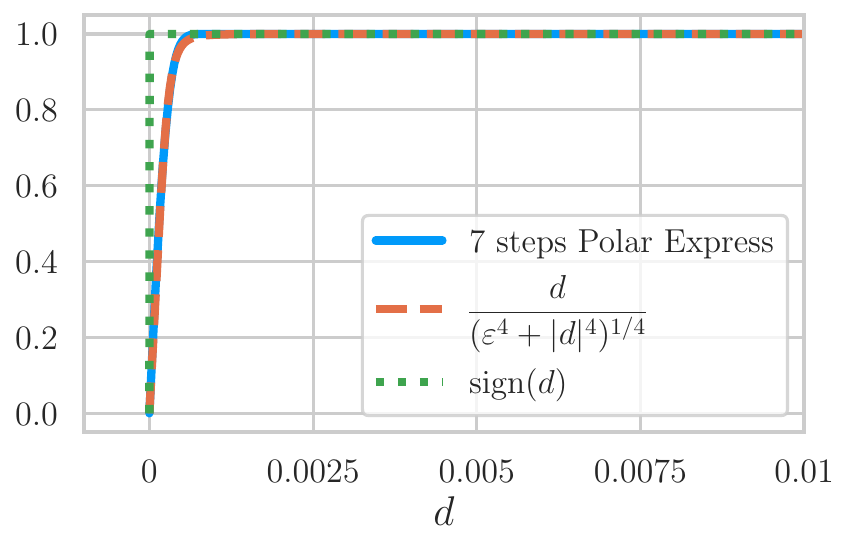}
        \caption{Polar Express and nonlinear preconditioner.}
        \label{fig:pe_approx}
    \end{subfigure}
    
    \caption{(a) Diagram of the proximal preconditioned gradient method as described in \eqref{eq:update}. The forward step applies a nonlinear preconditioner $\nabla \phi^*$ to the dual space direction $d^k$ (e.g., the gradient $\nabla f(x^k)$) to compute the forward step $y^k$ in the primal space. Since this might leave the constraint set $C$, a backward step is performed to compute $x^{k+1}$. (b) The Polar Express \cite{amsel2025polar} fits more closely to $\frac{d}{(\varepsilon^4 + |d|^4)^{1/4}}$ with $\varepsilon=3\times 10^{-4}$ than to the actual sign, showing that our theory more faithfully captures the practical implementations of Muon.}
    \label{fig:merged_figure}
\end{figure}

\subsection{Related work}
\paragraph{Spectral gradient and norm-constrained lmos} The algorithms that we introduce in this paper are mostly related to the layer-wise family of algorithms proposed in recent works~\cite{jordan2024muon,pethick2025trainingdeeplearningmodels}. The standard theoretical analysis for this class of methods was performed in \cite{pethick2025trainingdeeplearningmodels}, based on the linear minimization oracle over a (user-defined) norm ball. Moreover, constrained optimization problems where the constraint sets match those of the linear minimization oracle were studied via a Frank--Wolfe scheme. The literature on this type of methods has since been vast and fast expanding and so we cover only a subset of the relevant works here. 

In \cite{pethick2025generalizedgradientnormclipping} the algorithmic family was extended using a hybrid between the conditional gradient and steepest descent algorithm to tackle problems under generalized smoothness, and in \cite{kovalev2025understanding}, a non-Euclidean trust region perspective was employed that also leads to faster convergence under star-convexity of the objective function. A preconditioning and steepest descent point of view was given in \cite{lau2025polargrad}, and in \cite{chen2025muon}, Muon (with decoupled weight decay) was connected to the LION-$\mathcal{K}$ family of algorithms and thus to an optimization problem with constraints on the spectral norm of the parameters, following the more general analysis of \cite{chen2023lion}. A stochastic Frank--Wolfe analysis for the convergence of Muon with weight decay was also performed in~\cite{sfyraki2025lions}.

Further extensions to the setting where the cost function is layer-wise $(L_0,L_1)$-smooth were provided in \cite{riabinin2025gluon}, while \cite{zhang2025adagrad} introduced an adaptive method that combines AdaGrad with Muon. In \cite{liu2025mars}, the MARS \cite{yuan2024mars} variance-reduction technique was utilized on top of the Muon algorithm leading to improved convergence rates at the cost of additional samples per iteration and a similar approach was considered in \cite{qian2025muon}. We remark that significant attention has also been given to improving the computation of the generalized matrix sign operation employed by spectral gradient methods \cite{cesistasqueezing,ahn2025dion}, with \cite{amsel2025polar} using an optimal polynomial approximation in some sense, that leads to improved performance when used in Muon. Furthermore, various works have been devoted to analyzing the convergence of Muon under inexactness, in an effort to better describe what is happening in practice~\cite{shulgin2025beyond}.

\paragraph{Normalized gradient methods}
The methods we analyze are tightly connected to the normalized gradient and gradient clipping literature. Having found widespread use in neural network training, these methods have also received interest due to their significant theoretical properties. Normalizing (or clipping) the stochastic gradient leads to tighter control on the trajectory of the algorithm since the distance (or an upper bound of the distance) between two consecutive iterates depends solely on the stepsize and thus greatly simplifies the analysis under more aggressive forms of noise. This approach has been utilized in \cite{cutkosky2021high} where normalization was combined with clipped stochastic gradient estimates to prove high probability bounds under heavy-tailed noise and in \cite{hubler2024gradient,liu2024nonconvex} where the requirement of clipped gradients in the momentum estimate was lifted. An extension of this approach to the sign gradient method \cite{carlson2015preconditioned,bernstein2018signsgd} was considered in \cite{kornilov2025sign}. Nevertheless, recent results \cite{fatkhullin2025can} seem to indicate that vanilla SGD converges under heavy-tailed noise. We remark that \cite{pmlr-v235-gorbunov24a} also considers composite optimization problems, though both the smooth and nonsmooth term are assumed to satisfy some form of convexity.

Gradient normalization and its many variants have also been connected to tackling problems under generalized smoothness assumptions. Starting with the introduction of $(L_0,L_1)$-smoothness in \cite{zhang2020gradientclippingacceleratestraining} in an effort to better describe the smoothness conditions of neural networks, a large body of work has been devoted to this line of research \cite{zhang2020improved,koloskova2023revisiting,wang2023convergence,li2023convex,vankov2024optimizing}. This idea has been expanded upon, with other notions of generalized smoothness and adaptive algorithms appearing in \cite{chen2023generalized,tyurin2024toward}. A unifying nonlinear preconditioning perspective, which associated this family of methods with the anisotropic smoothness condition from \cite{laude2025anisotropic} was presented in \cite{oikonomidis2025nonlinearlypreconditionedgradientmethods}. Further related work to the method we analyze is provided in \cref{sec:alg}.

\subsection{Notation}
Throughout the paper $E$ is a Euclidean space with inner product $\langle\cdot,\cdot\rangle$ and induced norm $\|\cdot\|$. We denote by $\mathcal{C}^k(Y)$ the class of functions which are $k$ times continuously differentiable on an open set $Y \subseteq E$. We denote $\exR = \bR \cup\{-\infty, +\infty\}$ and say that $f:E \to \exR$ is proper if $f$ is not identically $+\infty$ and $f(x) > -\infty$ for all $x \in E$. The indicator function of a set $C \subseteq E$ is defined as $\delta_C(x) = 0$ if $x \in C$ and $+\infty$ otherwise. For a proper function $f:E \to \exR$ and $\lambda\geq 0$ we define the episcaling $(\lambda \star f)(x) = \lambda f(\lambda^{-1}x)$ for $\lambda > 0$ and $(\lambda \star f)(x) = \delta_{\{0\}}(x)$ otherwise. The convex conjugate is defined as $f^*(y) = \sup_{x \in E}\langle y,x \rangle - f(x)$ with $(\lambda \star f)^*=\lambda f^*$. A function $f:E \to \exR$ is prox-bounded if there exists $\gamma > 0$ such that $\inf_{x \in E}f(x) + \tfrac{1}{2\gamma}\|x-\bar x\|^2 > -\infty$ for some $\bar x \in E$. For a strictly convex and $\cC^1(E)$ function $f$ we denote its Bregman divergence $D_f(x,\bar x) = f(x)-f(\bar x) - \langle \nabla f(\bar x),x-\bar x \rangle$ for $x,\bar x \in E$. The real orthogonal group in dimension $n$ is denoted as $O(n) = \{A \in \bR^{n\times n} \mid A^\top A = AA^\top = I_n\}$, with $I_n$ the identity matrix of size $n\times n$. A function $F:\bR^{m\times n} \to \exR$ is real orthogonal invariant if $F(UXV) = F(X)$ for all $X\in\bR^{m\times n}, U\in O(m), V\in O(n)$. The singular value mapping $\sigma : \bR^{m\times n} \to \bR^{\min(m,n)}$ maps a matrix $X\in\bR^{m\times n}$ to its vector of singular values in nonincreasing order. We use $\Diag x$ to denote a (possibly rectangular) diagonal matrix with the vector $x$ on its main diagonal, where the dimensions are implied by the context. A function $f:\bR^n\to \exR$ is absolutely symmetric if $f(x)=f(|x|^\downarrow)$ for all $x\in\bR^n$, where $|x|^\downarrow$ denotes the vector with entries $|x_i|$ in nonincreasing order.

\section{Problem formulation and main algorithm} \label{sec:alg}
In the following we formulate the problem that we tackle in this paper
\begin{equation} \label{eq:problem} \tag{P}
    \min_{x \in E}\ F(x) := f(x) + g(x),
\end{equation}
where $f \in \cC^1(E)$ is a generally nonconvex function and $g:E \to \exR$ is a possibly nonsmooth and nonconvex regularizer. Throughout the paper we assume that $F_\star = \inf F > -\infty$, i.e., the objective is lower bounded. When we have access to the full gradient $\nabla f(x)$ we refer to \eqref{eq:problem} as the deterministic setting, whereas access only to a stochastic estimate $\nabla f(x,\xi)$ of $\nabla f(x)$ corresponds to the stochastic setting. We are mainly interested in the case where $g$ encodes constraints, i.e., it is the indicator of some closed set. Our main tool to tackle \eqref{eq:problem} is a generalization of the anisotropic proximal gradient method from \cite{laude2025anisotropic}, which takes the following form: starting from $x^0 \in \dom g$, compute
\begin{equation} \label{eq:update}
    \begin{aligned}
        \textcolor{\colfwd}{y^k} & \textcolor{\colfwd}{= x^k - \gamma_k \nabla \phi^*(d^k)}
        \\
        \textcolor{\colbck}{x^{k+1}} & \textcolor{\colbck}{\in \argmin_{x \in E}\; g(x) +( \gamma_k \star \phi)(x-y^k)},
    \end{aligned}
\end{equation}
for some direction $d^k \in E$ that carries information on $\nabla f(x^k)$, i.e., a (stochastic) gradient or momentum estimate. In a classical manner, we refer to the two steps as the \textcolor{\colfwd}{\emph{forward step}} and the \textcolor{\colbck}{\emph{backward}} or \textcolor{\colbck}{\emph{proximal step}} respectively. The function $\phi:E \to \exR$ plays the role of the \emph{reference function} and its convex conjugate $\phi^*$ that of the \emph{dual reference function}. This iteration is visualized in \cref{fig:fwd_bck_vis}. We next formulate our assumptions on $\phi$, which we consider valid throughout the remainder of the paper.
\begin{assumption} \label[assumption]{assum:sc}
    The reference function $\phi:E \to \exR$ is lsc, $\mu$-strongly convex and even with $\phi \geq 0$ and $\phi(0) = 0$. $\intr \dom \phi \neq \emptyset$; $\phi \in \cC^1(\intr \dom \phi)$ and for any sequence $\{x^k\}_{k \in \bN_0}$ in $\intr \dom \phi$ that converges to some boundary point of $ \dom \phi$, $\|\nabla \phi(x^k)\| \to +\infty$.
\end{assumption}
Note that $\phi^*$ is strictly convex and $\nabla\phi^*$ is odd by \cref{thm:conv_conj}. \Cref{assum:sc} thus serves two purposes: on the one hand since $\phi$ is strongly convex the minimization problem in \eqref{eq:update} is meaningful even when $g$ is nonconvex and on the other hand, it implies that the fixed points of the algorithm correspond to stationary points of the original problem. To better see that, note that for $d^k=\nabla f(x^k)$, using the optimality conditions for the subproblem, $x^{k+1}=x^k$ implies that $-\nabla \phi^*(\nabla f(x^{k+1})) = \nabla \phi^*(\widetilde \nabla g(x^{k+1}))$ for some $\widetilde \nabla g(x^{k+1}) \in \partial g(x^{k+1})$ and thus $-\nabla f(x^{k+1}) = \widetilde \nabla g(x^{k+1})$ due to the strict monotonicity of $\nabla \phi^*$.

\begin{figure}[tbp]
    \centering
    \begin{subfigure}[t]{0.24\linewidth}
        \centering
        \includegraphics[trim=0cm 0cm 11cm 0cm, clip, width=\linewidth]{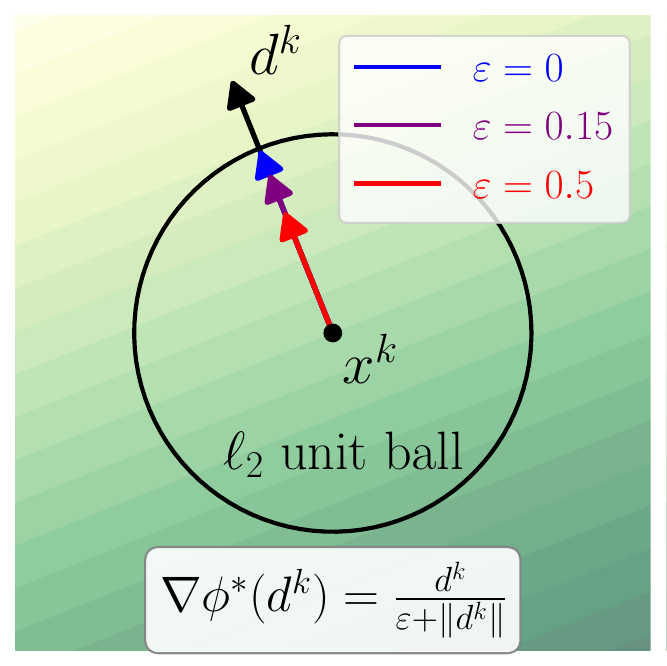}
        \caption{}
        \label{fig:smooth_lmo_left}
    \end{subfigure}
    \hfill
    \begin{subfigure}[t]{0.24\linewidth}
        \centering
        \includegraphics[trim=11cm 0cm 0cm 0cm, clip, width=\linewidth]{figs/smooth_lmo.pdf}
        \caption{}
        \label{fig:smooth_lmo_right}
    \end{subfigure}
    \hfill
    \begin{subfigure}[t]{0.24\linewidth}
        \centering
        \includegraphics[width=\linewidth, page=1]{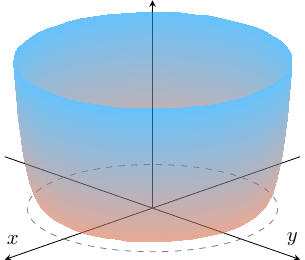}
        \caption{}
        \label{fig:barrier_iso}
    \end{subfigure}
    \hfill
    \begin{subfigure}[t]{0.24\linewidth}
        \centering
        \includegraphics[width=\linewidth, page=2]{figs/barrier.pdf}
        \caption{}
        \label{fig:barrier_aniso}
    \end{subfigure}
    
    \caption{(a-b) Illustration of the nonlinear preconditioning induced by $\phi$ as described in Example~\ref{example:norm_grad}. As $\varepsilon\to 0^+$, the linear minimization oracle over the unit ball is recovered. (c-d) Visualization of the reference functions $\phi$ in \cref{example:norm_grad} as barriers. Both isotropic (c) and anisotropic (d) explode towards the boundary of their domain thus enforcing the corresponding constraints. In contrast to the indicator function that is identically zero in the set, the barrier also acts as regularization.}
    \label{fig:combined_all}
\end{figure}

\begin{remark} \label{rem:stationarity}
    The preceding discussion also highlights why the standard Euclidean prox is not a good choice for the backward step. Indeed the fixed points of the algorithm would satisfy $-\nabla \phi^*(\nabla f(x^{k+1})) \in \partial g(x^{k+1})$ and thus in general would not imply the stationarity of $F$, except for specific cases. One such a case is when $\phi = h \circ \|\cdot\|$ for some $h$ satisfying \cref{assum:sc} and $g$ is the indicator of some closed set, where the inclusion takes the form $-\lambda \nabla f(x^{k+1}) \in \partial g(x^{k+1})$ for some constant $\lambda \geq 0$. This then implies $-\nabla f(x^{k+1}) \in \partial g(x^{k+1})$ due to the conic structure of the subdifferential of $g$ \cite[Proposition 1.79]{Mordukhovich2006}.
\end{remark}

It has been shown \cite{jakovetic2023nonlinear,chen2023lion,oikonomidis2025nonlinearly} that (the unconstrained version of) the update described in~\eqref{eq:update} covers a wide variety of algorithms from the normalized gradient literature.
\begin{example} \label{example:norm_grad}
    Let $\phi(x) = -\varepsilon(\ln(1-\|x\|)+\|x\|)$. Then, we have that 
    \begin{equation*}
        x^{k+1} = x^k - \gamma_k d^k / (\varepsilon + \|d^k\|).
    \end{equation*}
    It is straightforward thus that the scheme recovers the smooth version of the normalized gradient method \cite[Equation (6)]{zhang2020gradientclippingacceleratestraining}, while as $\varepsilon \to 0^+$, it approximates the standard normalized gradient method. On the other hand, by choosing $\phi(x) = -\varepsilon \sum_{i=1}^n (\ln(1-|x_i|)+|x_i|)$ we are led to an algorithm where each coordinate is updated as
    \begin{equation*}
        x^{k+1}_i = x^k_i - \gamma_k d^k_i/(\varepsilon + |d^k_i|)
    \end{equation*}
    and we thus approximate the sign gradient method. \Cref{fig:smooth_lmo_left,fig:smooth_lmo_right} visualize the effect of the smoothing parameter $\varepsilon$ for both of these reference functions.
\end{example}

As we explain in the following remarks, the algorithm is closely related to the norm-constrained lmo methods and actually provides a proximal viewpoint on the use of decoupled weight decay in adaptive methods.
\begin{remark}[Connection with unconstrained SCG \cite{pethick2025trainingdeeplearningmodels}]
    The forward step $x^k - \gamma \nabla \phi^*(\nabla f(x^k))$ indicates also the relation of the method with the norm-constrained lmo methods \cite{pethick2025trainingdeeplearningmodels,pethick2025generalizedgradientnormclipping} and more precisely the unconstrained SCG algorithm, where a linear minimization oracle is used, $\text{lmo}(\nabla f(x^k)) \in \argmin_{x \in C} \langle \nabla f(x^k),x \rangle$ and $C$ is some convex, compact set. It is then straightforward that $\text{lmo}(\nabla f(x^k)) \in \partial \phi^*(-\nabla f(x^k))$ where $\phi = \delta_C$ the indicator of $C$ and thus $\phi^*$ is its support function \cite[Example 13.3(i)]{bauschke2017correction}. When $C$ is some unit norm ball, the reference functions $\phi$ from \cref{example:norm_grad} can be viewed as barrier functions enforcing respectively $\ell_2$ and $\ell_\infty$ constraints while keeping desired properties such as strong convexity, thus clarifying the connection between the two methods. 
\end{remark}
\begin{remark}[Connection with SCG \cite{pethick2025trainingdeeplearningmodels}] \label{rem:scion_conn}
    In \cite{pethick2025trainingdeeplearningmodels}, a method for constrained optimization over convex compact sets, based on the Frank--Wolfe scheme, was also introduced. The main iterate there takes the form
    \begin{equation*}
        x^{k+1} = (1-\gamma_k)x^k + \gamma_k \text{lmo}(d^k),
    \end{equation*}
    where $\gamma_k$ plays the role of the stepsize. This algorithm can approximately be seen as an instance of \eqref{eq:update} when $\phi$ is a barrier function as in the previous remark and $g = \tfrac{1}{\lambda} \star \phi$. From the optimality conditions of the proximal step in \eqref{eq:update} it is then clear that 
    \[
        x^{k+1} = \left(1-\tfrac{\gamma_k \lambda}{1+\gamma_k \lambda}\right)x^k - \tfrac{\gamma_k}{1+\gamma_k \lambda} \nabla \phi^*(d^k)
    \]
    thus leading to SCG with a suitably chosen stepsize. In fact, minimizing \(f + \tfrac{1}{\lambda} \star \phi\) corresponds to an approximation of the barrier function method \cite[\S 1.3]{nesterov2004introductory} and we obtain the solution of the original constrained problem as $\varepsilon \to 0^+$. An illustration of $\phi$ acting as a barrier function is shown in \cref{fig:barrier_iso,fig:barrier_aniso}. The fact that adding weight decay to the Muon optimizer leads to the solution of the ball-constrained optimization problem was studied extensively in \cite{chen2025muon} where $\phi$ is the indicator of the spectral ball.
\end{remark}

\paragraph{Dual space preconditioning and anisotropic proximal gradient}
The forward step in \eqref{eq:update} was introduced in \cite{maddison2021dual} for smooth convex minimization problems under a generalized cocoercivity condition and the proximal step was studied in \cite{laude2019optimization} for general nonconvex composite problems. In fact, the approximate barrier functions described in \cref{example:norm_grad} can be found in \cite[Equation (51)]{maddison2021dual} and \cite[Table 2]{laude2019optimization}. In \cite{laude2023dualities}, it was shown that the forward step corresponds to a majorization-minimization procedure for a new class of functions that was then extended in \cite{laude2025anisotropic} into a full convergence analysis of the method described in \eqref{eq:update} in the deterministic setting, for $\phi$ with full domain. There, it was analyzed under a condition called anisotropic smoothness that generalized Euclidean Lipschitz smoothness, inspired by the theory of $\Phi$-convexity \cite{dolecki1978convexity,Vil08}.

The forward step of \eqref{eq:update} was then shown in \cite{chen2023lion} to be connected to the Lion \cite{chen2023symbolic} optimizer and a Lyapunov analysis in both continuous and discrete time was performed, showing that weight decay leads to the solution of a constrained optimization problem. A similar approach was considered then in \cite{chen2025muon} for the Muon algorithm under spectral ball constraints, relying on increasing batch sizes in the stochastic setting. In \cite{oikonomidis2025nonlinearly,bodard2025escapingsaddlepointslipschitz} it was shown that the forward step of \eqref{eq:update} can actually tackle problems that are beyond Lipschitz smoothness and in fact even $(L_0,L_1)$-smoothness.

Note that the approaches of \cite{chen2023lion,pethick2025trainingdeeplearningmodels} couple the nonsmooth term with the preconditioner $\nabla \phi^*$, i.e., in our notation $g \equiv \phi$ ($\mathcal{K}^*$ in \cite{chen2023lion} and $\delta_D$ where $D$ is the constraint set in    \cite[Theorem 5.6]{pethick2025trainingdeeplearningmodels}), and thus cannot be generalized to nonconvex $g$, or in fact to any nonsmooth term that is not coupled with the update of the algorithm. For that reason we build upon the approach of \cite{laude2025anisotropic} that leads to effective proximal algorithms for general $g$, while also handling relaxed smoothness conditions on $f$. Nevertheless, the analysis in \cite{laude2025anisotropic} is limited to reference functions with full domain and thus does not immediately cover the setting of this paper. To that aim we extend the analysis in \cref{app:det}.

\paragraph{The proximal step}
The backward step of the main iteration \eqref{eq:update} might seem unintuitive upon first inspection. Nevertheless, it is a well-studied object known as the anisotropic proximity operator \cite[Definition 2.10]{combettes2013moreau} or anisotropic proximal mapping \cite[Definition 3.7]{laude2025anisotropic}. It is straightforward that for $\phi = \tfrac{1}{2}\|\cdot\|^2$ it corresponds to the standard Euclidean proximal operator. In fact, it admits a generalization of the classical Moreau's decomposition \cite[Theorem 3.2]{teboulle1992entropic} and \cite[Proposition 1.3]{combettes2013moreau} which involves the Bregman proximal mapping \cite{bauschke2017descent} that is standard in the related literature.

In many interesting cases, computing the backward step in \eqref{eq:update} corresponds to computing (some nonlinear transformation of) the standard Euclidean prox, thus reducing the overhead. Following \cite{oikonomidis2025nonlinearlypreconditionedgradientmethods} to ease presentation, we say that $\phi$ is \emph{isotropic} if $\phi = h \circ \|\cdot\|$ for some $h : \bR \to \exR$ satisfying \cref{assum:sc} and \emph{anisotropic} or separable if $\phi(x) = \sum_{i=1}^n h_i(x)$. 
\begin{example} \label{ex:isotropic_proj}
    Let $\phi$ be isotropic and $C$ a closed set. Then,
    \begin{equation*}
        \argmin_{x \in E}\; \delta_C(x) + (\gamma \star \phi)(x-y^k) = P_C(y^k),
    \end{equation*}
    i.e., the backward step is the standard Euclidean projection of $y^k$ onto $C$. This follows by noting that $\phi$ is simply a strictly increasing function of the Euclidean norm and is thus minimized at the same point. We present several examples of constraint sets and corresponding reference functions where the proximal step is easy to compute in \cref{tab:back_step_euc}.
    
    It is important to stress that in our analysis we only assume prox-boundedness of $g$ thus covering a wide variety of nonsmooth functions. Prox-bounded functions include convex ones \cite[Theorem 2.26]{RoWe98} and in general functions that are bounded from below \cite[Exercise 1.24]{RoWe98} thus including the indicators of closed but possibly nonconvex sets. Therefore, the proposed methods can tackle interesting constraint sets such as the spectral sphere and Stiefel manifold that have recently received attention for their applications in deep learning \cite{miyato2018spectral,newhouse2025training,xie2026controlled}.
\end{example}

\begin{table*}
    \centering
    \caption{\label{tab:back_step_euc} Examples of the backward step described in \eqref{eq:update} for various constraint sets $C$. The first half is associated to anisotropic reference functions, while the second half assumes spectral anisotropic reference functions as in \Cref{cor:spectral_ref}. Their isotropic counterparts always reduce to the Euclidean projection in light of \Cref{ex:isotropic_proj}. We moreover assume $n \leq m$ without loss of generality. $U\in O(m)$ and $V\in O(n)$ denote real orthogonal matrices such that $Y^k = U\diag(\sigma(Y^k))V^\top$. We denote $H_s$ for a hard-thresholding operator that keeps $s$ of the largest elements (in magnitude) and zeros out the remaining elements.}
    \begin{tabular}{@{}cccc@{}}
    \toprule
          $C$ & Backward step & Comment \\ 
        \midrule
         $\{x \in \bR^n \mid |x_1| = \cdots = |x_n| = r\}$ & $r\sign(y_i)$ & Elementwise sign \\
         $\{x \in \bR^n \mid \|x\|_2 \leq r\}$ & Section~\ref{sec:aniso_ball} & $\ell_2$-ball \\
         $\{x \in \bR^n \mid \|x\|_\infty \leq r\}$ &  $\min(r, \max(-r, y^k_i))$ & Clipping & \\
         $\{x \in \bR^n \mid \|x\|_\infty = r\}$ & Section \ref{sec:aniso_proj_linfty_sphere} & $\ell_\infty$-sphere \\
         $\{x \in \bR^n \mid \|x\|_0 \leq s\}$ & $H_s(y^k)$ & Top $s$ magnitude \\
        \midrule
        $\{X \in \bR^{m\times n} \mid X^\top X = r^2I_n\}$ & $rU_{:,1:n}V^\top$ &  Stiefel manifold\\
       $\{X\in \bR^{m\times n} \mid \|X\|_F \leq r\}$ & Section~\ref{sec:aniso_ball} & Frobenius ball \\
        $\{X \in \bR^{m\times n} \mid \sigma_{\rm max}(X) \leq r\}$ & $U\Diag(\min(r, \sigma_i(Y^k)))V^\top$ & Singular value clipping \\
        $\{X\in \bR^{m\times n} \mid \sigma_{\rm max} = r\}$ & Section \ref{sec:aniso_proj_linfty_sphere} & Spectral sphere \\
         $\{X \in \bR^{m\times n} \mid \rank(X) \leq s\}$ & $U\Diag(H_s(\sigma(Y^k)))V^\top$ & Top $s$ singular values \\
 \bottomrule
    \end{tabular}
\end{table*}

\section{Reference functions for neural networks} \label{sec:matrix_ext}
\paragraph{Matrix setting}
Inspired by the recent success of optimizers that act on 2D parameters, e.g., Muon \cite{jordan2024muon}, spectral descent \cite{carlson2015preconditioned}, Shampoo \cite{gupta2018shampoo}, and others \cite{li2017preconditionedstochasticgradientdescent}, we extend our results to the matrix setting by considering real orthogonal invariant reference functions. Such functions are in one-to-one correspondence with absolutely symmetric functions acting on the singular values (see \cref{fac:abs_symm}) which allows for extensions of convex-analytic results from the vector setting. A first consequence of this observation is that if some absolutely symmetric function $\phi$ satisfies \Cref{assum:sc}, then so does the real orthogonal invariant reference function $\phi \circ \sigma$. This is formalized in the following.
\begin{lemma} \label[lemma]{lem:sc_mat}
   Suppose $\phi : \bR^{\min(m,n)} \to \exR$ is absolutely symmetric and satisfies \Cref{assum:sc}. Then, $\Phi = \phi \circ \sigma : \bR^{m\times n} \to \exR$ satisfies \Cref{assum:sc}. Further, we have that $\Phi^* = \phi^* \circ \sigma$, with $\Phi^* \in \cC^1(\bR^{m\times n})$ and $\nabla \Phi^*(Y) = U\Diag(\nabla \phi^*(\sigma(Y)))V^\top$, where $U\in O(m)$ and $V\in O(n)$ satisfy $Y = U\Diag(\sigma(Y))V^\top$ (full SVD).
\end{lemma}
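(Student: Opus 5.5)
We would derive everything from the classical spectral-function theory of Lewis, summarized in \cref{fac:abs_symm}. The facts we need are: for absolutely symmetric $\phi$, $\phi\circ\sigma$ is convex and lsc iff $\phi$ is; $(\phi\circ\sigma)^* = \phi^*\circ\sigma$; and $\phi\circ\sigma$ is differentiable at $X$ iff $\phi$ is differentiable at $\sigma(X)$. In the latter case $\nabla(\phi\circ\sigma)(X) = U\Diag(\nabla\phi(\sigma(X)))V^\top$ for any $U,V$ with $X=U\Diag(\sigma(X))V^\top$. The matrix version of subdifferentials works the same way with $\partial\phi$.

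\textbf{Elementary properties.} We check the items of \Cref{assum:sc} one at a time.
\begin{itemize}
\item Lower semicontinuity and convexity transfer by the fact above.
\item Evenness holds since $\sigma(-X)=\sigma(X)$.
\item $\Phi\ge 0$ and $\Phi(0)=\phi(\sigma(0))=\phi(0)=0$ are immediate.
\end{itemize}

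\textbf{Strong convexity.} Since $\phi$ is lsc, convex and $\mu$-strongly convex, $\psi:=\phi-\tfrac\mu2\|\cdot\|^2$ is convex, lsc and absolutely symmetric (the Euclidean norm is absolutely symmetric). Hence $\psi\circ\sigma$ is convex. Because $\|\sigma(X)\|^2=\|X\|_F^2$, we get $\Phi = \psi\circ\sigma + \tfrac\mu2\|\cdot\|_F^2$, which is $\mu$-strongly convex.

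\textbf{Conjugate.} The conjugate formula gives $\Phi^*=\phi^*\circ\sigma$. By \cref{thm:conv_conj} (strong convexity of $\phi$), $\phi^*$ is differentiable everywhere with $(1/\mu)$-Lipschitz gradient. It is also absolutely symmetric, since the conjugate of an absolutely symmetric function is absolutely symmetric. Lewis's gradient formula then yields differentiability of $\Phi^*$ everywhere and the stated expression for $\nabla\Phi^*$. Continuity of the gradient follows without checking the SVD formula directly: $\Phi$ is $\mu$-strongly convex, so $\Phi^*$ has $(1/\mu)$-Lipschitz gradient, hence $\Phi^*\in\cC^1$. We note that the formula is independent of the choice of $U,V$, as part of Lewis's theorem.

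\textbf{Domain and boundary behavior.} We have $\dom\Phi=\sigma^{-1}(\dom\phi)$. By continuity of $\sigma$, $\sigma^{-1}(\intr\dom\phi)$ is open and contained in $\dom\Phi$, and it is nonempty since it contains $0$. Indeed, $0\in\intr\dom\phi$, because $\phi$ is even and convex with nonempty interior of its domain, so $0$ is the midpoint of $\pm x$ for an interior point $x$. We then argue $\intr\dom\Phi=\sigma^{-1}(\intr\dom\phi)$, using that convex sets which are absolutely symmetric correspond under $\sigma$, and that interiors correspond. This is the main obstacle.

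What we would try first for the reverse inclusion: if $\sigma(X)\in\bdry\dom\phi$, take a supporting hyperplane at $\sigma(X)$. By symmetry it can be chosen with a nonnegative, nonincreasing normal $a$. Then $\langle U\Diag(a)V^\top,\cdot\rangle$, using von Neumann's trace inequality, supports $\dom\Phi$ at $X$, so $X\notin\intr\dom\Phi$.

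On this interior, differentiability of $\phi$ at $\sigma(X)$ gives $\Phi\in\cC^1$. Here continuity of the gradient comes from monotone-operator arguments, or directly: a convex function differentiable on an open set is $\cC^1$ there.

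Finally, take $X^k\to\bar X\in\bdry\dom\Phi$. Then $\sigma(X^k)\to\sigma(\bar X)\in\bdry\dom\phi$, and $\|\nabla\Phi(X^k)\|_F=\|\nabla\phi(\sigma(X^k))\|\to\infty$ by the unitary invariance of the Frobenius norm.
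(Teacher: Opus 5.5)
Your proposal is correct, and for lower semicontinuity, evenness, $\Phi(0)=0$, strong convexity (via convexity of $(\phi-\tfrac{\mu}{2}\|\cdot\|^2)\circ\sigma$), the conjugate formula, $\Phi^*\in\cC^1$ and the gradient formula it follows the paper's argument. The genuine difference is in the essential-smoothness part of \Cref{assum:sc}: nonempty interior of the domain, $\cC^1$ on that interior, and gradient blow-up at the boundary.

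The paper never identifies $\intr\dom\Phi$. It uses \cite[Theorem 26.1]{rockafellar1970convex}: a closed proper convex function is essentially smooth if and only if its subdifferential is single-valued. It then transfers single-valuedness from $\partial\phi$ to $\partial\Phi$ through Lewis's subgradient formula \cite[Corollary 3.2]{lewis1995convex}, so all three properties follow at once.

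You instead prove $\intr\dom\Phi=\sigma^{-1}(\intr\dom\phi)$ by hand and then verify each property pointwise. The step you flagged as the main obstacle does go through. Suppose $a\neq0$ satisfies $\langle a,y\rangle\le\langle a,s\rangle$ on $\dom\phi$, where $s=\sigma(X)$. Since $\dom\phi$ is invariant under signed permutations, and by the rearrangement inequality (using $s\ge0$ nonincreasing), $\sup_{\dom\phi}\langle |a|^\downarrow,\cdot\rangle=\sup_{\dom\phi}\langle a,\cdot\rangle\le\langle a,s\rangle\le\langle |a|^\downarrow,s\rangle$. Von Neumann's inequality then shows that $W=U\Diag(|a|^\downarrow)V^\top\neq0$ satisfies $\langle W,Y\rangle\le\langle |a|^\downarrow,\sigma(Y)\rangle\le\langle W,X\rangle$ for all $Y\in\dom\Phi$.

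Your route is longer and needs two extra ingredients: the rearrangement and von Neumann inequalities, and Rockafellar's result that a convex function differentiable on an open convex set is $\cC^1$ there. In return, it uses only Lewis's differentiability transfer rather than the full subdifferential formula, and it yields the explicit description of $\intr\dom\Phi$ as a by-product. The paper's route is a two-line black-box argument that avoids any geometry of the domain.
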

In particular, by noting that isotropic and anisotropic reference functions are both absolutely symmetric under \Cref{assum:sc}, we obtain the following matrix reference functions with corresponding gradients of the conjugate. We shall refer to these as \emph{spectral isotropic} and \emph{spectral anisotropic} reference functions respectively.
\begin{corollary} \label[corollary]{cor:spectral_ref}
    Suppose $h : \bR \to \exR$ satisfies \Cref{assum:sc}. Then,
    \begin{enumerate}
        \item $\Phi_{\rm iso} = h \circ \|\cdot\| \circ \sigma$ satisfies \Cref{assum:sc} with $\nabla \Phi_{\rm iso}^*(Y) = {h^*}'(\|Y\|)Y/\|Y\|$.
        \item $\Phi_{\rm aniso} : \bR^{m\times n} \to \exR : X \mapsto \sum_{i=1}^{\min(m,n)} h(\sigma_i(X))$  satisfies \Cref{assum:sc} with \[\nabla \Phi_{\rm aniso}^*(Y) = U\Diag({h^*}'(\sigma_1(Y)), \ldots, {h^*}'(\sigma_{\min(m,n)}(Y)))V^\top\] and $U,V$ are defined as in \Cref{lem:sc_mat}.
    \end{enumerate}
\end{corollary}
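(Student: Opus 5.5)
The plan is to reduce both items to \cref{lem:sc_mat} by showing that the two vector functions $\phi_{\rm iso} = h\circ\|\cdot\|$ and $\phi_{\rm aniso}(x) = \sum_{i=1}^{\min(m,n)} h(x_i)$ on $\bR^{\min(m,n)}$ are absolutely symmetric and satisfy \cref{assum:sc}. Since $\|\sigma(X)\| = \|X\|_F$, we have $\Phi_{\rm iso} = \phi_{\rm iso}\circ\sigma$ and $\Phi_{\rm aniso} = \phi_{\rm aniso}\circ\sigma$. Once the two vector facts are verified, \cref{lem:sc_mat} gives \cref{assum:sc} for $\Phi$, the identity $\Phi^* = \phi^*\circ\sigma$, and the formula $\nabla\Phi^*(Y) = U\Diag(\nabla\phi^*(\sigma(Y)))V^\top$. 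The remaining task is to compute $\nabla\phi^*$ in each case and simplify.

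Absolute symmetry is immediate. $\|x\|$ is invariant under permutations and sign changes. Since $h$ is even, $\sum_i h(x_i) = \sum_i h(|x_i|)$, and a sum is invariant under reordering.

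For \cref{assum:sc} I check the properties one at a time.
\begin{itemize}
\item \emph{Lsc, evenness, nonnegativity and $\phi(0)=0$.} These pass through composition with the norm and through sums, using that $h$ is even with $h\geq 0$ and $h(0)=0$.
\item \emph{Strong convexity.} For $\phi_{\rm aniso}$ this holds because a separable sum of $\mu$-strongly convex functions is $\mu$-strongly convex. For $\phi_{\rm iso}$, since $h$ is even and convex it is nondecreasing on $[0,\infty)$; then $h(t)-\tfrac{\mu}{2}t^2$ is convex, even and nondecreasing on $t\ge0$, so its composition with $\|\cdot\|$ is convex, giving $\mu$-strong convexity.
\item \emph{Domain and differentiability.} If $\dom h$ has interior $(-a,a)$, then $\intr\dom\phi_{\rm aniso} = (-a,a)^{\min(m,n)}$ and $\intr\dom\phi_{\rm iso}$ is the open ball of radius $a$. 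Both are nonempty. $\phi_{\rm aniso}$ is $\cC^1$ there. $\phi_{\rm iso}$ is $\cC^1$ away from $0$, and at $0$ because $h'(0)=0$ by evenness, so $\nabla\phi_{\rm iso}(x) = h'(\|x\|)x/\|x\|$ extends continuously.
\item \emph{Boundary blow-up.} For $x^k$ approaching a boundary point, in the isotropic case $\|\nabla\phi_{\rm iso}(x^k)\| = |h'(\|x^k\|)|\to\infty$ since $\|x^k\|\to a$. In the anisotropic case some coordinate $x^k_i\to\pm a$ while the others stay in $[-a,a]$, and $|h'(x^k_i)|\to\infty$.
\end{itemize}
This blow-up verification, together with keeping track of the domains, is the step needing most care, though it is elementary.

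Next come the conjugates. For the separable function, $\phi_{\rm aniso}^*(y) = \sum_i h^*(y_i)$, so $\nabla\phi_{\rm aniso}^*(\sigma(Y))$ has entries ${h^*}'(\sigma_i(Y))$. This gives item 2 directly from \cref{lem:sc_mat}. For the isotropic function, $\phi_{\rm iso}^*(y) = \sup_{t\ge0}\big(t\|y\| - h(t)\big) = h^*(\|y\|)$, using evenness of $h$ and Cauchy--Schwarz. Hence $\nabla\phi_{\rm iso}^*(y) = {h^*}'(\|y\|)\,y/\|y\|$ for $y\neq0$, and it equals $0$ at $y=0$ since ${h^*}'(0)=0$ by oddness. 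Plugging in $y=\sigma(Y)$ gives
\[
\nabla\Phi_{\rm iso}^*(Y) = \frac{{h^*}'(\|Y\|)}{\|Y\|}\,U\Diag(\sigma(Y))V^\top = {h^*}'(\|Y\|)\,\frac{Y}{\|Y\|},
\]
where $\|\sigma(Y)\| = \|Y\|$ is the Frobenius norm. This is item 1.
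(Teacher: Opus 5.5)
Your proposal is correct and takes the same route as the paper. The paper justifies the corollary only by remarking that isotropic and anisotropic reference functions are absolutely symmetric, and then invokes \cref{lem:sc_mat}. You additionally check that $h\circ\|\cdot\|$ and $\sum_i h(x_i)$ satisfy \cref{assum:sc} and compute $\phi_{\rm iso}^* = h^*\circ\|\cdot\|$ explicitly, which are exactly the details the paper leaves implicit.
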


\begin{example} \label{ex:muon_fwd}
    Taking $h = -\varepsilon(\ln(1-|\cdot|) + |\cdot|)$, and considering its corresponding spectral anisotropic reference function, the forward step in \eqref{eq:update} takes the form \[Y^k = X^k - \gamma_k U^k \Diag\left(\frac{\sigma_1}{\varepsilon + \sigma_1}, \ldots, \frac{\sigma_q}{\varepsilon + \sigma_q}\right){V^k}^\top\] where $q=\min(m,n)$ and $U^k\Diag(\sigma){V^k}^\top$ is any (full) SVD of the direction $D^k$. Since $\frac{x}{\varepsilon + |x|}$ vanishes at zero, the columns in $U,V$ corresponding to zero singular values can be omitted and, as usual, a reduced SVD can be used instead.
\end{example}
\begin{remark} \label{rem:polar_approx}
    Similar as to how the sign gradient method can be seen as an idealized Adam iteration when $\varepsilon\to0^+$, we believe the forward step in \Cref{ex:muon_fwd} is a more natural algorithm of which the lmo approach in \cite{pethick2025generalizedgradientnormclipping} is the ideal variant. Indeed, when $\varepsilon\to0^+$ the forward step becomes the matrix sign (with the convention that zero singular values are mapped to zero) and the Muon update that is most common in the literature is recovered, i.e., $X^{k+1}=X^k-\gamma_k UV^{\top}$ where $D^k = U \Sigma V^\top$ is a reduced SVD of the direction $D^k$. In fact, since the matrix sign is often computed based on compositions of odd polynomials, it is clear that near $0$, these polynomials are unable to approximate the sign very well, being close to $0$ rather than $1$. This discrepancy can naturally be captured by a nonzero $\varepsilon$ as shown in \Cref{fig:pe_approx}, albeit for a slightly different preconditioner. Note that in practice, the input is first normalized to ensure the polynomial iteration converges. We elaborate on this remark in \cref{app:polar_discussion} and provide a convergence result in \cref{subsec:inexact_main_text}.
\end{remark}

Next, if both $G$ and $\Phi$ are real orthogonal invariant matrix functions with additionally $\Phi$ being convex (which holds, in particular, under \cref{assum:sc}), we show how the backward step in \eqref{eq:update} can be reduced to an anisotropic prox of the singular values, thus reducing the computation to the vector setting.
\begin{theorem} \label{thm:aniso_prox_spectral} Let $G, \Phi:\bR^{m\times n}\to\exR$ be real orthogonal invariant. Suppose that $\Phi$ is convex and lsc. Further, let $\gamma > 0$, $Y \in \mathbb{R}^{m\times n}$, and $U\in O(m), V\in O(n)$ be any real orthogonal matrices such that $Y = U\Diag(\sigma(Y))V^\top$.
    If \[x^\star \in \argmin_{x\in\mathbb{R}^{q}}\; G(\Diag x) + (\gamma \star \Phi)(\Diag(x - \sigma(Y)))\]
    with $q=\min(m,n)$, then
    \[
        U\Diag(x^\star)V^\top \in \argmin_{X\in\mathbb{R}^{m\times n}}\; G(X) + (\gamma \star \Phi)(X - Y).
    \]
\end{theorem}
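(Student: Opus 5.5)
The plan is to show that for every $X$ the objective value at $X$ is at least the value at $\bar X := U\Diag(x^\star)V^\top$. The key tool is von Neumann's trace inequality in its convex-analytic form: for a real orthogonal invariant convex lsc function, the value at a difference $X-Y$ is bounded below by the value at the difference of the singular-value matrices. Concretely, $\Phi = \phi\circ\sigma$ with $\phi$ absolutely symmetric, using the correspondence in \cref{fac:abs_symm}. I would then use the Lewis/Fenchel-type inequality
\[
\Phi(X-Y) \;\geq\; \phi(\sigma(X)-\sigma(Y)).
\]
Since episcaling preserves orthogonal invariance and convexity, $(\gamma\star\Phi)(Z) = \gamma\,\phi(\gamma^{-1}\sigma(Z))$, so it suffices to treat $\gamma\star\Phi$ as another convex, lsc, orthogonal invariant function and apply the inequality to it directly.

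\textbf{Step 1 (the inequality).} Assume first that $\Phi$ is proper; otherwise the case is trivial. Since $\Phi$ is convex and lsc, $\Phi = \Phi^{**}$, and $\Phi^* = \phi^*\circ\sigma$ by \cref{lem:sc_mat}, whose conjugate formula only needs absolute symmetry and convexity. Hence
\[
\Phi(X-Y) = \sup_{W}\; \langle W, X-Y\rangle - \phi^*(\sigma(W)).
\]
Restrict the supremum to $W = P\Diag(w)Q^\top$, where $P$ is a simultaneous left and $Q$ a simultaneous right orthogonal factor. Such a common factor does not exist for arbitrary $X,Y$, so I would instead argue as follows. Von Neumann gives
\[
\langle W,X\rangle \le \langle\sigma(W),\sigma(X)\rangle,
\]
with equality attainable by aligning $W$ with $X$'s SVD. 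For the $-Y$ term one needs $-\langle W,Y\rangle \ge -\langle \sigma(W),\sigma(Y)\rangle$, which is the right direction. So choose $W = P_X\Diag(w)Q_X^\top$ aligned with $X$ and with $w$ sorted nonnegative. Then
\[
\langle W,X-Y\rangle \geq \langle w,\sigma(X)-\sigma(Y)\rangle .
\]
Taking the sup over nonnegative sorted $w$, and using the absolute symmetry of $\phi^*$ so that the sup over all $w$ is achieved at the sorted nonnegative rearrangement (because $\sigma(X)-\sigma(Y)$ pairs best with suitably signed and ordered $w$), gives $\Phi(X-Y)\ge \phi^{**}(\sigma(X)-\sigma(Y)) = \phi(\sigma(X)-\sigma(Y))$. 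Making this rearrangement argument rigorous is the main obstacle. The fallback is to cite the known result (Lewis, "nonsmooth analysis of eigenvalues") that $\phi(\sigma(X)-\sigma(Y))\le \Phi(X-Y)$ for convex absolutely symmetric $\phi$, which follows from the majorization $\sigma(X)-\sigma(Y)\prec_w \sigma(X-Y)$ (Lidskii–Mirsky) together with Schur-convexity and monotonicity of convex absolutely symmetric functions.

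\textbf{Step 2 (conclude).} Write $\Diag x$ as a $q$-vector embedding, and note $G(X) = G(\Diag\sigma(X))$ and $\Phi(\Diag v) = \phi(v)$ by orthogonal invariance. For any $X$, Step 1 gives
\[
G(X) + (\gamma\star\Phi)(X-Y) \;\ge\; G(\Diag\sigma(X)) + (\gamma\star\Phi)(\Diag(\sigma(X)-\sigma(Y))),
\]
and the right-hand side is at least the minimal value, attained at $x^\star$. Finally, orthogonal invariance yields
\[
G(\bar X) = G(\Diag x^\star), \qquad (\gamma\star\Phi)(\bar X - Y) = (\gamma\star\Phi)\bigl(U\Diag(x^\star-\sigma(Y))V^\top\bigr) = (\gamma\star\Phi)(\Diag(x^\star-\sigma(Y))).
\]
So $\bar X$ attains the lower bound and is therefore a minimizer. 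The case $\gamma$ with $\Phi$ improper or infinite values is handled by the same inequalities in $\exR$.
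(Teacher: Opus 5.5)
\textbf{Gap in Step 1.} Step 2 is fine and is essentially the paper's argument. The paper first substitutes $\tilde X = U^\top X V$ and compares with $\Diag(\sigma(Y))$, whereas you compare $X$ with $Y$ directly; the two are equivalent. The problem is Step 1: the conjugacy/von Neumann argument does not prove $\Phi(X-Y)\ge\phi(\sigma(X)-\sigma(Y))$. Choosing $W=P_X\Diag(w)Q_X^\top$ with $w$ sorted and nonnegative only yields
\[
\Phi(X-Y)\;\ge\;\sup\{\langle w,d\rangle-\phi^*(w) \,:\, w_1\ge\cdots\ge w_q\ge 0\},\qquad d=\sigma(X)-\sigma(Y).
\]
For such $w$ we have $\langle w,d\rangle\le\langle w,|d|^\downarrow\rangle$. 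So this restricted supremum is at most $\phi(d)=\phi^{**}(d)$, and in general strictly smaller: the rearrangement goes the wrong way. The vector $d$ needs signed or unsorted $w$, and for those your inequality $\langle W,Y\rangle\le\langle w,\sigma(Y)\rangle$ is not available. Von Neumann only gives $\langle W,Y\rangle\le\langle |w|^\downarrow,\sigma(Y)\rangle$, which is the larger quantity.

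Concretely, take $\phi=\|\cdot\|_1$, so $\Phi$ is the nuclear norm and $\phi^*$ is the indicator of the $\ell_\infty$ unit ball. Let $X=\Diag(2,0)$ and $Y=I_2$. Then $d=(1,-1)$ and $\phi(d)=2$, but your bound gives only $\sup\{w_1-w_2 \,:\, 1\ge w_1\ge w_2\ge 0\}=1$; the symmetric choice aligned with $Y$ gives only $0$. This is not a technicality. Required for all convex absolutely symmetric $\phi$ (take the Ky Fan $k$-norms), Step 1 is equivalent to the weak majorization $|\sigma(X)-\sigma(Y)|\prec_w\sigma(X-Y)$. That is a Mirsky/Lidskii-type result, which you cannot get by plugging one aligned $W$ into von Neumann's inequality.

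\textbf{Your fallback is the right fix.} It is exactly what the paper does in \cref{lem:mirsky_general}, but it must be stated with absolute values. The paper's steps are:
\begin{itemize}
\item invoke $|\sigma(X)-\sigma(Y)|\prec_w\sigma(X-Y)$ (Horn--Johnson, Theorem 3.4.5);
\item use absolute symmetry to get $\phi(d)=\phi(|d|)$;
\item apply \cref{lem:weak_major_nonneg} to write $|d|$ as a convex combination of signed permutations of $\sigma(X-Y)$;
\item finish with Jensen's inequality and absolute symmetry once more.
\end{itemize}
The signed version $\sigma(X)-\sigma(Y)\prec_w\sigma(X-Y)$ that you wrote is too weak. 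A convex absolutely symmetric $\phi$ is monotone only in $|x|$, not on all of $\bR^q$: for example $(-5,0)\prec_w(1,0)$, yet $\|(-5,0)\|_1>\|(1,0)\|_1$. With that lemma replacing your Step 1, the rest of your proof goes through.
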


\begin{example}
    Using \cref{thm:aniso_prox_spectral}, the second half of \cref{tab:back_step_euc} follows directly as the spectral counterpart of the first half. Consequently, this framework encompasses the spectral ball and Stiefel manifold constraints considered in \cite{newhouse2025training}, as well as the spectral sphere constraint studied in \cite{xie2026controlled,miyato2018spectral}, and Frobenius constraints as in \cite{loshchilov2024ngpt,wen2025hyperball}. \Cref{fig:bck_steps_vis} visualizes some of our spectral backward steps. 
\end{example}

\paragraph{Layerwise setting}
In neural networks, $E$ often takes the form $E_1 \times \cdots \times E_N$, where each $E_i$ is either $\bR^{n_i}$ (e.g.\ the batch normalization scalars and biases) or $\bR^{m_i\times n_i}$ (e.g.\ transformer weights and embedding matrices). By then considering a separable reference function $\phi(x_1,\ldots,x_N) = \sum_{i=1}^N \phi_i(x_i)$ and a separable regularizer $g(x_1,\ldots,x_N) = \sum_{i=1}^N g_i(x_i)$ with $x_i\in E_i$ for $i=1,\ldots,N$, it follows by \cite[Theorem 4.12]{beck2017first} that $\phi^*(y_1,\ldots,y_N) = \sum_i^N \phi_i^*(y_i)$ and both the forward and backward step in \eqref{eq:update} become separable updates over each $E_i$. 

\section{Analysis in the stochastic setting} \label{sec:stoch}
As already mentioned, we focus mostly on methods akin to normalized gradient and thus for the remainder of this section we assume that $\dom \phi$ is bounded by some ball of radius $D > 0$, i.e., $D = \sup_{x \in \dom \phi}\|x\| < \infty$. As a stationarity measure, we consider $\gap(x^k) := D_{\phi^*}(\nabla f(x^k), -\widetilde \nabla g(x^k))$ where $\widetilde \nabla g(x^k) \in \partial g(x^k)$ is the subgradient chosen at iterate $k$. Due to the strict convexity of $\phi^*$ and the fact that $\nabla \phi^*$ is odd, $\gap(x^k) = 0$ if and only if $-\nabla f(x^k) = \widetilde \nabla g(x^k)$, thus implying stationarity. 

We now move on to analyzing a stochastic version of the algorithm \eqref{eq:update} where we assume that we have access to a stochastic gradient oracle $\nabla f(\cdot, \xi)$ of $\nabla f$, $\xi$ being a random variable. We assume the following.
\begin{assumption} \label{assum:stoch}
    The stochastic gradient oracle $\nabla f(\cdot, \xi)$ is unbiased, i.e., $\expec_\xi[\nabla f(x,\xi)] = \nabla f(x)$ for all $x \in E$. Moreover, it has finite $p$-th central moment,
    \begin{equation}
            \expec_\xi[\|\nabla f(x,\xi)-\nabla f(x)\|^p] \leq \sigma^p, \qquad \forall x \in E
        \end{equation}
    for some  $\sigma \geq 0$ and $p \in (1,2]$.
\end{assumption}
The finite $p$-th central moment assumption is a generalization of the standard bounded variance assumption (which is subsumed for $p=2$) that is standard in analyzing stochastic gradient algorithms. Since we are mostly interested in the case where $g$ involves the indicator of a compact set, in this section we assume that $f$ is a Lipschitz smooth function, while for $g$ merely prox-boundedness.
\begin{assumption} \label{assum:f_lipschitz} $f$ is $L$-Lipschitz smooth;
  $g$ is proper, lsc and prox-bounded with threshold $\gamma_g > \tfrac{\gamma}{\mu}$.
\end{assumption}
Note that we have not yet defined the direction $d^k$, which will take different forms, i.e., standard Polyak momentum and the variance-reduction modification introduced in \cite{cutkosky2019momentum}. Another approach would be to consider using just the estimate $\nabla f(x^k,\xi^k)$ in \eqref{eq:update}. However, due to the nonlinear preconditioning introducing bias in the update \cite{koloskova2023revisiting}, this requires large batch sizes that asymptotically lead to utilizing the true gradient. We therefore do not analyze this case and directly use the momentum estimators that are also more often used in practice.
We now move on to our main convergence result.
\begin{theorem} \label{thm:main_res}
    Let \cref{assum:stoch,assum:f_lipschitz} hold. Let 
    $\{x^k\}_{k \in \bN_0}$ be the sequence of iterates generated by \eqref{eq:update} with $d^0 = \nabla f(x^0, \xi^0)$, 
    \[
        d^k = \alpha \nabla f(x^k, \xi^k) + (1-\alpha)d^{k-1},
    \]
    $\alpha = (K+1)^{-1/2}$ and $\gamma = \bar \gamma(K+1)^{-3/4}$ for some $\bar \gamma > 0$.
    Then, we have
    \begin{equation*}
    \frac{1}{K+1}\sum_{k=0}^K\expec[\gap(x^{k+1})] 
     \leq O\left((K+1)^{-\min\left\{\tfrac{1}{4},\tfrac{p-1}{2p}\right\}}\right)
    \end{equation*}
\end{theorem}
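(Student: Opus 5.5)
We follow the usual normalized-momentum template: a one-step descent inequality in which the error $\epsilon^k := d^k - \nabla f(x^k)$ enters only linearly, followed by a bound on $\expec\|\epsilon^k\|$ under the $p$-th moment assumption.

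\textbf{One-step descent.} Write the optimality condition of the backward step in \eqref{eq:update}: $\nabla\phi((y^k-x^{k+1})/\gamma) \in \partial g(x^{k+1})$, i.e.\ the chosen subgradient is $\widetilde\nabla g(x^{k+1}) = \nabla\phi((y^k - x^{k+1})/\gamma)$. Since $\dom\phi$ lies in a ball of radius $D$, we have $\|y^k-x^k\|\le \gamma D$ and $\|x^{k+1}-y^k\|\le\gamma D$, so $\|x^{k+1}-x^k\|\le 2\gamma D$.

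Compare $g(x^{k+1}) + (\gamma\star\phi)(x^{k+1}-y^k)$ with the value of the same objective at $x^k$. The value at $x^k$ is $g(x^k) + \gamma\phi(\nabla\phi^*(d^k))$. Combine this with the $L$-smoothness bound $f(x^{k+1})\le f(x^k)+\langle\nabla f(x^k),x^{k+1}-x^k\rangle + 2L\gamma^2D^2$. Then use Fenchel--Young equalities, $\phi(\nabla\phi^*(u)) = \langle u,\nabla\phi^*(u)\rangle - \phi^*(u)$, and their three-point forms, in the style of \cref{app:det}.

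Rearranged, this should give an inequality of the form
\[
F(x^{k+1}) \le F(x^k) - \gamma\, D_{\phi^*}\bigl(d^k, -\widetilde\nabla g(x^{k+1})\bigr) + 2\gamma D\|\epsilon^k\| + 2L\gamma^2D^2 .
\]
The inner products with $\epsilon^k$ are bounded via $\|x^{k+1}-x^k\|\le 2\gamma D$.

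Next we trade $D_{\phi^*}(d^k,\cdot)$ for $\gap(x^{k+1}) = D_{\phi^*}(\nabla f(x^{k+1}),\cdot)$. Write $D_{\phi^*}(a,c)-D_{\phi^*}(b,c) = \phi^*(a)-\phi^*(b) - \langle\nabla\phi^*(c),a-b\rangle$. Because $\phi^*$ is $D$-Lipschitz and $\|\nabla\phi^*\|\le D$, this difference is at most $2D\|a-b\|$. Moreover $\|d^k-\nabla f(x^{k+1})\|\le\|\epsilon^k\|+2L\gamma D$. Hence the descent inequality holds with $\gap(x^{k+1})$ in place of the Bregman term, at the cost of extra terms $O(\gamma D\|\epsilon^k\|+L\gamma^2D^2)$.

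Telescope, divide by $\gamma(K+1)$, and use $F\ge F_\star$. This yields
\[
\frac1{K+1}\sum_{k=0}^{K}\expec[\gap(x^{k+1})]\lesssim \frac{F(x^0)-F_\star}{\gamma(K+1)} + LD^2\gamma + \frac{D}{K+1}\sum_{k=0}^{K}\expec\|\epsilon^k\| .
\]

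\textbf{Momentum error under heavy tails.} Unroll
\[
\epsilon^k = (1-\alpha)\epsilon^{k-1} + (1-\alpha)\bigl(\nabla f(x^{k-1})-\nabla f(x^k)\bigr) + \alpha\zeta^k ,\qquad \zeta^k = \nabla f(x^k,\xi^k)-\nabla f(x^k).
\]
This gives $\epsilon^k = (1-\alpha)^k\zeta^0 + \sum_{j}(1-\alpha)^{k-j}(\text{drift}_j) + \alpha\sum_j (1-\alpha)^{k-j}\zeta^j$.

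The three pieces are bounded as follows.
\begin{itemize}
\item The drift terms are bounded by $2L\gamma D$ each, so their geometric sum is $O(L\gamma D/\alpha)$.
\item The initial term contributes $(1-\alpha)^k\sigma$, which sums to $O(\sigma/\alpha)$ in total.
\item For the martingale sum, apply the von Bahr--Esseen inequality for $p\in(1,2]$ (equivalently, Jensen plus the $p$-th moment of a martingale). This gives $\expec\|\alpha\sum_j(1-\alpha)^{k-j}\zeta^j\|\le 2\alpha\bigl(\sum_j(1-\alpha)^{p(k-j)}\sigma^p\bigr)^{1/p}\lesssim \sigma\alpha^{1-1/p}$.
\end{itemize}

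Plugging in $\alpha=(K+1)^{-1/2}$ and $\gamma=\bar\gamma(K+1)^{-3/4}$, the rate terms become:
\begin{itemize}
\item $\tfrac{1}{\gamma K}$ and $\gamma$ are both $(K+1)^{-1/4}$ or better;
\item $L\gamma/\alpha=(K+1)^{-1/4}$;
\item $\sigma/(\alpha K)=(K+1)^{-1/2}$;
\item $\sigma\alpha^{(p-1)/p}=(K+1)^{-(p-1)/(2p)}$.
\end{itemize}
Taking the worst of these gives the claimed $O\bigl((K+1)^{-\min\{1/4,(p-1)/(2p)\}}\bigr)$.

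\textbf{Main obstacle.} The delicate step is the descent inequality itself. The prox is anisotropic, $g$ is nonconvex, and $\phi$ has bounded domain, so the comparison must use only the defining minimality of $x^{k+1}$ and not convexity of $g$. We also need the error to appear linearly, as $\|\epsilon^k\|$ rather than squared. Prox-boundedness with $\gamma_g>\gamma/\mu$ is used only to guarantee that $x^{k+1}$ exists. The first thing to try is the deterministic descent lemma of \cref{app:det}, with $d^k$ in place of $\nabla f(x^k)$, tracking the error through the $D$-Lipschitzness of $\phi^*$.
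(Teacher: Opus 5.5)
Your proposal is correct and follows the paper's proof essentially step for step: the one-step inequality with $-\gamma D_{\phi^*}(d^k,-\widetilde\nabla g(x^{k+1}))$ is \cref{thm:comp_desc1}, the exchange of $d^k$ for $\nabla f(x^{k+1})$ via the $D$-Lipschitzness of $\phi^*$ and $\|\nabla\phi^*\|\le D$ is \cref{thm:comp_desc2} (with the same constants $4\gamma D\|\lambda^k\|+6\gamma^2D^2L$), and the momentum-error bound via the martingale $p$-th moment inequality is \cref{thm:stoch_bound}. The one misleading remark is your suggestion to start from the deterministic analysis of \cref{app:det}, which rests on anisotropic smoothness and the gap $\cG^\phi_\gamma$; the right route, and the one the paper uses, is the Euclidean descent lemma plus Fenchel--Young computation that you actually carry out.
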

The proof of \cref{thm:main_res} can be found in \cref{app:comp_setting} along with the exact constants for the rates. 

\subsection{Reducing variance with the STORM estimator}
We now describe the convergence of the method using the variance reduction technique from~\cite{cutkosky2019momentum}. In order to properly use the STORM estimator to obtain better guarantees we need to make some more assumptions on our problem data that are nevertheless standard in the related literature \cite{cutkosky2019momentum,yuan2024mars}.
\begin{assumption} \label{assum:fxi_lipschitz} $f(\cdot, \xi)$ is $L$-Lipschitz smooth with probability $1$.
\end{assumption}
We now move on to the main result of this subsection.
\begin{theorem} \label{thm:conv_storm}
    Let \cref{assum:stoch,assum:f_lipschitz,assum:fxi_lipschitz} hold true with $p=2$. Let $\{x^k\}_{k \in \bN_0}$ be the sequence of iterates generated by \eqref{eq:update} with $d^0 = \nabla f(x^0, \xi^0)$,
    \[
        d^k = (1-\alpha_{k})d^{k-1} + \alpha_k \nabla f(x^k, \xi^k) 
        + (1-\alpha_k) (\nabla f(x^k,\xi^k) - \nabla f(x^{k-1},\xi^k)),
    \]
    $\alpha_k = (k+1)^{-2/3}$ and $\gamma_k = (k+1)^{-2/3}$. Then, we have
    \begin{equation*}
        \frac{1}{K+1}\sum_{k=0}^K\expec[\gap(x^{k+1})] 
        \leq \widetilde O\left((K+1)^{-1/3}\right),
    \end{equation*}
    where $\widetilde O$ hides logarithmic factors.
\end{theorem}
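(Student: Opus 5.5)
Our plan is to reuse the one-step descent inequality behind \cref{thm:main_res} and change only how the momentum error $\epsilon^k := d^k - \nabla f(x^k)$ is controlled, replacing the Polyak recursion by the STORM recursion of \cite{cutkosky2019momentum}.

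\textbf{Step 1: one-step descent with an error term.} From the optimality conditions of the backward step in \eqref{eq:update} we get a subgradient $\widetilde\nabla g(x^{k+1}) \in \partial g(x^{k+1})$ with $\nabla\phi^*(d^k) + \nabla\phi(\gamma_k^{-1}(x^{k+1}-y^k))$-type identities. Because $\dom\phi$ lies in a ball of radius $D$, the displacement satisfies $\|x^{k+1}-x^k\| \le 2D\gamma_k$. Using $L$-smoothness of $f$ and comparing the prox objective at $x^{k+1}$ with that at $x^k$, we aim for an inequality of the form
\[
F(x^{k+1}) \le F(x^k) - \gamma_k\,\gap(x^{k+1}) + c_1\gamma_k\|\epsilon^k\| + c_2 L D^2\gamma_k^2 .
\]
The error enters linearly, with a factor $2D$, because $\nabla\phi^*$ takes values in $\dom\phi$; this is exactly what makes the estimate robust to the bias of normalization. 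This inequality should be the same one established for \cref{thm:main_res} in \cref{app:comp_setting}, so we take it from there and only change the error bound. One subtlety: the gap is evaluated at $x^{k+1}$ with the true gradient $\nabla f(x^{k+1})$, while the step used $d^k$ at $x^k$. Converting between them costs $\|\nabla f(x^{k+1})-\nabla f(x^k)\| \le 2LD\gamma_k$, which is absorbed into the $\gamma_k^2$ term.

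\textbf{Step 2: bound the STORM error.} Writing $\epsilon^k = (1-\alpha_k)\epsilon^{k-1} + \alpha_k Z_k + (1-\alpha_k) W_k$, we have:
\begin{itemize}
\item $Z_k = \nabla f(x^k,\xi^k) - \nabla f(x^k)$, with second moment at most $\sigma^2$;
\item $W_k = \nabla f(x^k,\xi^k) - \nabla f(x^{k-1},\xi^k) - (\nabla f(x^k) - \nabla f(x^{k-1}))$, with $\mathbb{E}\|W_k\|^2 \le L^2\|x^k - x^{k-1}\|^2 \le 4L^2D^2\gamma_{k-1}^2$ by \cref{assum:fxi_lipschitz}.
\end{itemize}
Both $Z_k$ and $W_k$ have zero conditional mean. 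Hence
\[
\mathbb{E}\|\epsilon^k\|^2 \le (1-\alpha_k)^2\,\mathbb{E}\|\epsilon^{k-1}\|^2 + 2\alpha_k^2\sigma^2 + 8L^2D^2\gamma_{k-1}^2 .
\]
The crucial point is that the displacement bound is deterministic, so we never need a bound on gradients or an adaptive coupling of step and momentum as in the original STORM. With $\alpha_k = \gamma_k = (k+1)^{-2/3}$, we use induction to show $\mathbb{E}\|\epsilon^k\|^2 \le C(k+1)^{-2/3}$ for $C$ depending on $\sigma^2$, $L^2D^2$ and $\|\epsilon^0\|^2 \le \sigma^2$. The induction only needs $(1-\alpha_k)^2 (k)^{-2/3} + c\,(k+1)^{-4/3} \le (k+1)^{-2/3}$ for suitable $C$, which follows from $(1-\alpha_k)^2 \le 1-\alpha_k$ and $k^{-2/3} - (k+1)^{-2/3} = O(k^{-5/3})$. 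By Jensen, $\mathbb{E}\|\epsilon^k\| \le \sqrt{C}(k+1)^{-1/3}$. This step is the main technical obstacle, since the constants and the handling of the first few indices must be checked carefully.

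\textbf{Step 3: telescope.} Summing the inequality of Step 1 in expectation gives
\[
\sum_{k} \gamma_k\,\mathbb{E}[\gap(x^{k+1})] \le F(x^0) - F_\star + \sum_{k}\Bigl(c_1\sqrt{C}(k+1)^{-1} + c_2 L D^2 (k+1)^{-4/3}\Bigr) = O(\log K).
\]
Since $\gamma_k \ge (K+1)^{-2/3}$ for $k \le K$, this yields
\[
\frac{1}{K+1}\sum_{k=0}^K \mathbb{E}[\gap(x^{k+1})] \le \frac{(K+1)^{2/3}}{K+1}\,O(\log K) = \widetilde O\bigl((K+1)^{-1/3}\bigr).
\]
Varying stepsizes are harmless in Step 1, because the descent inequality is per-iteration and prox-boundedness holds with $\gamma_k \le 1 < \mu\gamma_g$ after scaling.
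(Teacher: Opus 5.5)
Your proposal is correct, but it combines the ingredients differently from the paper. Step~1 is exactly \cref{thm:comp_desc2} (with $c_1 = 4D$, $c_2 = 6$), and your error recursion is the one in \cref{thm:lemma_bound_diff_storm}.

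The paper never bounds $\expec\|\lambda^k\|^2$ on its own. It splits $4\gamma_k D\|\lambda^k\| \le 8\gamma_k^{3/2}D^2L + \tfrac{\gamma_k^{1/2}}{2L}\|\lambda^k\|^2$ by Young's inequality and multiplies the error recursion by $\tfrac{1}{L}\gamma_k^{-1/2}$. It then checks $\tfrac{(1-\alpha_k)^2}{\gamma_k^{1/2}} - \tfrac{1}{\gamma_{k-1}^{1/2}} \le -\tfrac12\gamma_{k-1}^{1/2}$ and telescopes the potential $\expec[F(x^{k+1})] + \tfrac{1}{L}\bigl(\gamma_k^{-1/2} - \tfrac12\gamma_k^{1/2}\bigr)\expec\|\lambda^k\|^2$. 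This is the Lyapunov template of the original STORM analysis.

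You instead use that the displacement bound $\|x^k - x^{k-1}\| \le 2D\gamma_{k-1}$ is deterministic. The error recursion is therefore decoupled from the objective and can be solved first. Induction gives $\expec\|\lambda^k\|^2 \le C(k+1)^{-2/3}$, Jensen gives $\gamma_k\expec\|\lambda^k\| \le \sqrt{C}(k+1)^{-1}$, and summing the descent inequality directly produces the same $\log K$ factor.

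The induction you flagged as the main obstacle does close for every $k \ge 1$, not only asymptotically. Bound the additive term by $c(k+1)^{-4/3}$ with $c = 2\sigma^2 + 2^{13/3}L^2D^2$, using $k^{-4/3} \le 2^{4/3}(k+1)^{-4/3}$. The step then requires $\rho_k := (k+1)^{4/3}\bigl[(k+1)^{-2/3} - (1-(k+1)^{-2/3})k^{-2/3}\bigr] \ge c/C$. Strict subadditivity of $t \mapsto t^{2/3}$ gives $\rho_k > 0$. Moreover $\rho_1 \approx 0.65$ and $\rho_k \to 1$, so $C = \max\{\sigma^2, 2c\}$ works.

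Your route is more elementary and yields a pointwise-in-$k$ bound on the estimator error. The paper's potential argument gives explicit constants without an auxiliary induction. It is also the version needed when the step length is not bounded a priori, as in the original STORM, where $\|x^{k+1}-x^k\|$ depends on $d^k$.

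Your closing remark on prox-boundedness amounts to reading \cref{assum:f_lipschitz} as $\gamma_g > \sup_k \gamma_k/\mu = 1/\mu$. The paper leaves this implicit as well.
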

The proof of this theorem can be found in \cref{app:storm_proof}, along with the exact constants. It is clear that we obtain a better convergence rate than \cref{thm:main_res}, albeit with one extra sample per iteration and under bounded variance.

\begin{remark}
    Our proof technique for \cref{thm:conv_storm,thm:main_res} departs from existing analyses in the related literature. On the one hand, although we consider a stochastic extension of the method proposed in \cite{laude2025anisotropic}, our analysis differs greatly: in \cite{laude2025anisotropic} convergence follows from standard \emph{majorization-minimization arguments}, while we utilize \emph{adaptive stepsizes} that do not require knowledge of the smoothness parameter. Moreover, due to the nonlinear preconditioning, $\expec[\nabla \phi^*(\nabla f(x^k,\xi^k))] \neq \nabla \phi^*(\nabla f(x^k))$ and thus there is \emph{bias} in the update, making the arguments in the stochastic case far from straightforward. On the other hand, the handling of general nonconvex terms $g$ leads to a different analysis compared to related spectral preconditioning works such as \cite{pethick2025trainingdeeplearningmodels}: the proximal step in \eqref{eq:update} complicates the proof and requires the introduction of a new suboptimality gap. To that aim, we introduce the novel gap $D_{\phi^*}(\nabla f(x^k), -\widetilde \nabla g(x^k))$ that is tailored to the geometry of the algorithm and build our proof based on the machinery of convex analysis \cite{rockafellar1970convex}. Furthermore, we utilize the bounded step of the algorithm to show its resilience to heavy-tailed noise. One potential drawback of our analysis is that we do not obtain convergence rates for the standard stationarity measures. However, in most cases one can convert the obtained rates to the standard ones, see for example the proof of \cref{thm:pe}.
\end{remark}

\subsection{Analysis of Polar Express in the unconstrained setting} \label{subsec:inexact_main_text}
As discussed in \cref{rem:polar_approx}, the update obtained by the Polar Express can be seen as closely approximating a preconditioner $\nabla \phi^*$ after normalizing the input. In this subsection we show how this can be incorporated into our analysis. For that reason, consider the update
\begin{equation} \label{eq:polar_express_update}
    x^{k+1} = x^k - \gamma \nabla \phi^*(d^k_{\epsilon}),
\end{equation}
with $d^k_{\epsilon} = d^k / (\|d^k\|+\epsilon)$ and $\epsilon > 0$. Then we have the following result.
\begin{theorem} \label{thm:pe}
    Let $E=\bR^n$, $g\equiv 0$, \cref{assum:stoch,assum:f_lipschitz} hold true and $\nabla \phi^*(y)_i = \frac{y_i}{(\varepsilon^\kappa + |y_i|^\kappa)^{1/\kappa}}$ for $\kappa > 2, \varepsilon > 0$. Let, moreover, $\{x^k\}_{k \in \bN}$ be the sequence of iterates generated by \eqref{eq:polar_express_update} with $d^k$ as in \cref{thm:main_res} and $\alpha = (K+1)^{-1/2}$, $\gamma = (K+1)^{-3/4}$, $\epsilon \leq (K+1)^{-\min\left\{\tfrac{1}{4},\tfrac{p-1}{2p}\right\}}$. Then, we have
    \begin{equation*}
        \frac{1}{K+1}\sum_{k=0}^K \expec[\|\nabla f(x^k)\|] \leq O\left((K+1)^{-\min\left \{\tfrac{1}{4},\tfrac{p-1}{2p} \right \}}\right).
    \end{equation*}
\end{theorem}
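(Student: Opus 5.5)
Since $g\equiv 0$ there is no backward step, and \eqref{eq:polar_express_update} is a preconditioned momentum step with a bounded update. Writing $\psi(y) := \nabla\phi^*(y)$ componentwise, we have $|\psi(y)_i|\le 1$, so $\|x^{k+1}-x^k\|\le \gamma\sqrt n$. The plan is a one-step descent inequality from $L$-smoothness,
\[
f(x^{k+1}) \le f(x^k) - \gamma\langle \nabla f(x^k), \psi(d^k_\epsilon)\rangle + \tfrac{L}{2}\gamma^2 n .
\]
We then relate the inner product to $\|\nabla f(x^k)\|$, paying an error controlled by $e^k := d^k-\nabla f(x^k)$ and by $\epsilon$. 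Summing over $k$ and telescoping gives
\[
\frac1{K+1}\sum_k \expec[\text{progress}_k] \le \frac{f(x^0)-f_\star}{\gamma(K+1)} + \tfrac{L}{2}\gamma n + \text{error terms}.
\]
With $\gamma=(K+1)^{-3/4}$ the first two terms are $O((K+1)^{-1/4})$.

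\textbf{Momentum error bound.} This is the same bound as in the proof of \cref{thm:main_res}, which I would reuse. Unrolling $e^k = (1-\alpha)e^{k-1} + \alpha(\nabla f(x^k,\xi^k)-\nabla f(x^k)) + (1-\alpha)(\nabla f(x^{k-1})-\nabla f(x^k))$ splits the error into two parts. The drift part is bounded by $L\gamma\sqrt n/\alpha$. The noise part is a martingale sum, which von Bahr--Esseen type inequalities for $p\in(1,2]$ bound by $O(\sigma\alpha^{(p-1)/p})$, plus a transient term $(1-\alpha)^k\sigma$. With $\alpha=(K+1)^{-1/2}$ this gives
\[
\frac{1}{K+1}\sum_k\expec\|e^k\| = O\bigl((K+1)^{-1/4} + (K+1)^{-(p-1)/(2p)}\bigr).
\]

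\textbf{Main obstacle: the lower bound on $\langle \nabla f(x^k),\psi(d^k_\epsilon)\rangle$.} Here I would argue coordinatewise. Since $\psi_i$ is odd and increasing, and normalization does not change signs, set $u=d^k_\epsilon$ and $\theta = \|d^k\|/(\|d^k\|+\epsilon)$. Write $g_i$ for $\nabla f(x^k)_i$ and $d_i$ for $d^k_i$. Then
\[
g_i\psi(u_i) \ge |d_i|\,|\psi(u_i)| - 2|g_i-d_i| .
\]
Next I need $\sum_i |d_i|\psi(|u_i|) \gtrsim \|d\|-\text{small}$. The key feature is that $u=\theta d/\|d\|$ has $\|u\|\approx 1$, so the largest coordinate satisfies $|u_j|\ge \theta/\sqrt n$. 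For coordinates with $|u_i|\ge\varepsilon$ we get $\psi(|u_i|)\ge 2^{-1/\kappa}$. For coordinates with $|u_i|<\varepsilon$ we have $|d_i|<\varepsilon(\|d\|+\epsilon)$, a total loss of at most $n\varepsilon(\|d\|+\epsilon)$.

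This does not vanish with $K$, since $\varepsilon$ is fixed. I would therefore not try to drive $\varepsilon\to 0$. Instead I would use only the dominant coordinate together with homogeneity: $\sum_i |d_i|\psi(|u_i|)\ge \|d\|_\infty\,\psi(\theta/\sqrt n)$, with $\psi(\theta/\sqrt n)\ge c(n,\varepsilon,\kappa)>0$ whenever $\theta\ge 1/2$, i.e.\ whenever $\|d\|\ge\epsilon$. The case $\|d\|<\epsilon$ contributes at most $\epsilon$, which is exactly why the statement requires $\epsilon\le (K+1)^{-\min\{1/4,(p-1)/(2p)\}}$.

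\textbf{Conclusion.} Combining, $\langle\nabla f(x^k),\psi(d^k_\epsilon)\rangle\ge c\,\|\nabla f(x^k)\|/\sqrt n - C(\|e^k\|+\epsilon)$, with constants depending on $n,\varepsilon,\kappa$. Summing, dividing by $\gamma(K+1)$, and inserting the momentum bound and the bound on $\epsilon$ yields the stated rate. The delicate point is obtaining a uniform constant $c$ in the lower bound for $\psi$ in the regime $\theta\ge1/2$, which relies on $\kappa>2$ only mildly.
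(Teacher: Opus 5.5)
Your proof is correct. It differs from the paper's in the key step, the lower bound on the inner product.

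\textbf{The paper's route.} It applies $(\varepsilon^\kappa+a^\kappa)^{1/\kappa}\le\varepsilon+a$ coordinatewise to get
$\langle d^k,\nabla\phi^*(d^k_\epsilon)\rangle\ge\sum_i |d^k_i|^2/(\varepsilon(\|d^k\|+\epsilon)+|d^k_i|)\ge r(\|d^k\|)$, where $r(t)=t^2/((1+\varepsilon)t+\epsilon\varepsilon)$. This loses only a multiplicative factor per coordinate, which is why the additive loss $n\varepsilon(\|d\|+\epsilon)$ you worried about never appears, and the resulting lower bound is dimension-free. It is transferred to $\nabla f(x^k)$ via the $1$-Lipschitz continuity of $r$. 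Because $r$ is nonlinear, the paper then applies Jensen's inequality twice (convexity of $r$) and solves a quadratic inequality. This ends with $\frac{1}{K+1}\sum_k\expec[\|\nabla f(x^k)\|]\le(1+\varepsilon)A+\sqrt{A\epsilon\varepsilon}$.

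\textbf{Your route.} The dominant-coordinate argument uses only the monotonicity of $\psi$ and $\|d\|_\infty\ge\|d\|/\sqrt n$. It yields the linear bound $\sum_i|d_i|\psi(|u_i|)\ge\frac{c}{\sqrt n}(\|d\|-\epsilon)$ with $c=\psi(1/(2\sqrt n))$. This telescopes directly and avoids the Jensen and quadratic-inequality step entirely.

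\textbf{What each buys.} Your approach pays in the constants and in the role of $\epsilon$.
\begin{itemize}
\item Your leading factor $\sqrt n/c$ always exceeds $\sqrt n$, and it grows like $2n\varepsilon$ when $\varepsilon\gg1/\sqrt n$. The paper's factor is $1+\varepsilon$.
\item In your bound $\epsilon$ enters additively, so the hypothesis $\epsilon\le(K+1)^{-\min\{1/4,(p-1)/(2p)\}}$ is essential for convergence. The paper's $\sqrt{A\epsilon\varepsilon}$ term still vanishes for fixed $\epsilon$, at the slower rate $(K+1)^{-\frac12\min\{\frac14,\frac{p-1}{2p}\}}$.
\end{itemize}
Finally, neither argument uses $\kappa>2$. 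Your constant $c$ is explicit for any $\kappa>0$, so the step you flag as delicate is not.
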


Note that we only study the case $E = \bR^n$ for presentation purposes, since the matrix (and layerwise) case follows similarly. Our analysis is presented in \cref{app:inexact_update,app:stoch_inexact}.

\paragraph{Experiments} The Stiefel manifold and spectral ball (singular value clipping) constraints, together with the algorithm we have analyzed, have been studied empirically in \cite{newhouse2025training} for training Lipschitz transformers and robust MLPs, where it was shown to be very effective also for training stability. In light of \Cref{rem:scion_conn}, our theory also encompasses the state-of-the-art Scion algorithm \cite{pethick2025trainingdeeplearningmodels}, thereby highlighting the unifying power of our framework. Even more recently, the hyperball algorithm \cite{wen2025hyperball} utilizes a Frobenius sphere constraint, though it is not clear whether stationarity holds, cf.\ \cref{rem:stationarity}. Further experiments are provided in \cref{app:experiments} to showcase how weights constraints affect training.

\section*{Acknowledgments}
This work was supported by the Research Foundation Flanders (FWO) PhD grant 11A8T26N and research project G033822N; Research Council KUL grant C14/24/103. This work was supported by the French National Research Agency (ANR) under grant ANR-25-CE23-3749 (project SIMPLES). This work was funded by the Swiss National Science Foundation (SNSF) under grant number 20001-240094.

The resources and services used in this work were provided by the VSC (Flemish Supercomputer Center), funded by the Research Foundation - Flanders (FWO) and the Flemish Government.

\printbibliography


\appendix
\onecolumn
\counterwithin{theorem}{subsection}
\counterwithin{lemma}{subsection}
\counterwithin{definition}{subsection}
\counterwithin{assumption}{subsection}
\counterwithin{proposition}{subsection}

\section{Preliminaries and helper results}
Lipschitz smoothness of a function $f:E \to \bR$ with constant $L > 0$ means that $f$ is continuously differentiable and satisfies
\begin{equation*}
    \|\nabla f(x)-\nabla f(\bar x)\| \leq L \|x-\bar x\| \qquad \forall x, \bar x \in E.
\end{equation*}
Lipschitz smoothness implies the so-called Euclidean descent lemma \cite[Lemma 5.7]{beck2017first}, that describes global quadratic upper bounds
\begin{equation}
    f(x) \leq f(\bar x) + \langle \nabla f(\bar x),x-\bar x \rangle + \tfrac{L}{2}\|x-\bar x\|^2 \qquad \forall x,\bar x \in E.
\end{equation}
\paragraph{Variational and convex analysis results}
Following \cite[Definition 1.77]{Mordukhovich2006}, for a function $g:E \to \exR$ and a point $\bar x \in \dom g$ we denote $\partial g(\bar x)$ its limiting subdifferential at $\bar x$. In the case where $g$ is strictly differentiable at $\bar x$, the subdifferential agrees with its gradient, $\partial g(\bar x) = \{\nabla g(\bar x)\}$, while it agrees with the subdifferential of convex analysis when $g$ is proper, lsc and convex.

Throughout the appendix we also use the following which is an immediate result of \cref{assum:sc}.
\begin{lemma} \label{thm:conv_conj}
    Let $\phi$ satisfy \cref{assum:sc}. Then $\nabla \phi(0) = 0$, $\phi^*\geq 0$, $\phi^*(0) = 0$ and $\phi^*$ is even. Moreover, $\dom \phi^* = E$ and $\phi^*$ is Lipschitz smooth on $E$ with constant $\tfrac{1}{\mu}$. Furthermore, $\phi^*$ is strictly convex and thus $\nabla \phi^*$ strictly monotone. Finally, $\nabla \phi^*$ is odd.
\end{lemma}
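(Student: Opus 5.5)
We derive the claims about $\phi^*$ from the corresponding properties of $\phi$ through the definition $\phi^*(y)=\sup_x \langle y,x\rangle-\phi(x)$ and standard conjugate duality.

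\emph{Values and sign of $\phi^*$.} Since $\phi(0)=0$, we get $\phi^*(y)\geq \langle y,0\rangle-\phi(0)=0$ for every $y$. Since $\phi\geq 0$, we get $\phi^*(0)=\sup_x -\phi(x)\leq 0$, hence $\phi^*(0)=0$.

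\emph{Evenness.} Substituting $x\mapsto -x$ in the supremum and using that $\phi$ is even gives $\phi^*(-y)=\phi^*(y)$.

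\emph{Full domain and smoothness.} Because $\phi$ is $\mu$-strongly convex, the function $x\mapsto \phi(x)-\langle y,x\rangle$ is lsc and strongly convex, hence coercive, so its infimum is finite and attained. Thus $\dom\phi^*=E$. The standard duality between strong convexity and smoothness \cite{bauschke2017correction} then gives that $\phi^*$ is differentiable with $\tfrac1\mu$-Lipschitz gradient. Concretely, $\nabla\phi^*(y)$ is the unique maximizer and satisfies $y\in\partial\phi(\nabla\phi^*(y))$. Strong monotonicity of $\partial\phi$ yields $\mu\|\nabla\phi^*(y)-\nabla\phi^*(y')\|^2\le\langle y-y',\nabla\phi^*(y)-\nabla\phi^*(y')\rangle$, and Cauchy--Schwarz then gives the Lipschitz bound.

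\emph{Oddness of $\nabla\phi^*$ and $\nabla\phi(0)=0$.} Oddness follows by differentiating the even function $\phi^*$. For $\nabla\phi(0)=0$: $0\in\intr\dom\phi$ because $\phi$ is even and convex with nonempty interior of the domain. Indeed, if $x\in\intr\dom\phi$ then $-x\in\intr\dom\phi$, so their midpoint $0$ lies in the interior by convexity. Hence $\phi$ is differentiable at $0$. Since $0$ is a global minimizer ($\phi\ge0=\phi(0)$), we get $\nabla\phi(0)=0$.

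\emph{Strict convexity of $\phi^*$.} This is the main step. The plan is to use essential smoothness of $\phi$: it is $\cC^1$ on $\intr\dom\phi$ with gradient blowing up at the boundary (the final clause of \cref{assum:sc}), so $\phi$ is essentially smooth and $\partial\phi(x)=\emptyset$ off $\intr\dom\phi$. By \cite[Theorem 26.3]{rockafellar1970convex}, $\phi^*$ is then essentially strictly convex. Since $\dom\phi^*=E$, this means strictly convex on all of $E$.

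A direct argument is also available. Suppose $\phi^*$ were affine on a segment $[y_0,y_1]$ with $y_0\ne y_1$. Then $\nabla\phi^*$ would equal a common point $x$ along the segment. So $y_0,y_1\in\partial\phi(x)$, which forces $x\in\intr\dom\phi$ because $\partial\phi$ is empty at boundary points under essential smoothness. But then $\partial\phi(x)=\{\nabla\phi(x)\}$ is a singleton, contradicting $y_0\neq y_1$.

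Strict monotonicity of $\nabla\phi^*$ follows immediately from strict convexity.
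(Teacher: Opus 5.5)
Your proof is correct and takes the same route as the paper. The paper simply cites Bauschke--Combettes for each item: Props.~13.21--13.22 for the sign and evenness of $\phi^*$, Prop.~14.15 and Thm.~18.15 for full domain and the strong-convexity/Lipschitz-gradient duality, and Prop.~18.10 for the strict-convexity step you obtain from essential smoothness via Rockafellar's Theorem~26.3; your direct derivations unpack those citations, and your explicit check that $0\in\intr\dom\phi$ (by evenness and convexity of $\dom\phi$) usefully fills in what the paper leaves implicit when it deduces $\nabla\phi(0)=0$ from $\argmin\phi=\{0\}$.
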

\begin{proof}
    $\nabla \phi(0) = 0$ follows from the fact that $\argmin \phi = \{0\}$. $\phi^* \geq 0$ and $\phi^*(0) = 0$ follow from \cite[Proposition 13.22]{bauschke2017correction} and $\phi^*$ is even from \cite[Proposition 13.21]{bauschke2017correction}. $\dom \phi^* = E$ follows from \cite[Proposition 14.15]{bauschke2017correction} and the Lipschitz smoothness of $\phi^*$ on $E$ by \cite[Theorem 18.15]{bauschke2017correction}. The fact that $\phi^*$ is strictly convex follows from \cite[Proposition 18.10]{bauschke2017correction} and the strict monotonicity of $\nabla \phi^*$ from \cite[Proposition 17.10]{bauschke2017correction}. The fact that $\nabla \phi^*$ is odd is obtained by differentiating the function $\phi^* \circ -\id = \phi^*$.
\end{proof}

\paragraph{Matrix analysis and majorization results}
Following result describes a one-to-one correspondence between real orthogonal invariant and absolutely symmetric functions.
\begin{lemma}[{\cite[Proposition 2.2]{lewis1995convex}}] \label{fac:abs_symm} 
     If $F : \bR^{m\times n} \to \exR$ is real orthogonal invariant, then the function $f:\bR^{\min(m,n)}\to\exR$ defined by $f(x) = F(\Diag x)$ is absolutely symmetric. Moreover, we have that $F = f \circ \sigma$.
\end{lemma}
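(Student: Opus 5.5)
The plan is to prove both claims directly from the definitions, using only the fact that signed permutation matrices and the orthogonal factors of an SVD belong to the orthogonal groups. Throughout, let $q=\min(m,n)$ and write $\Diag x\in\bR^{m\times n}$ for the rectangular diagonal matrix whose leading $q\times q$ block is $\diag(x)$ and whose other entries are zero.

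For absolute symmetry, it suffices to show two things: $f$ is invariant under any permutation of coordinates, and $f$ is invariant under flipping the sign of any single coordinate. Indeed, $|x|^\downarrow$ is obtained from $x$ by a finite composition of such operations, which then gives $f(x)=f(|x|^\downarrow)$.
\begin{itemize}
\item Sign flips: let $S=\Diag(s)\in\bR^{m\times m}$ with $s_i\in\{\pm1\}$, where the entries beyond index $q$ are taken to be $1$. Then $S\in O(m)$, and $S\,\Diag x=\Diag(s_{1:q}\odot x)$. Orthogonal invariance of $F$ (with $V=I_n$) gives $f(s_{1:q}\odot x)=F(S\,\Diag x)=F(\Diag x)=f(x)$.
\item Permutations: let $\pi$ be a permutation of $\{1,\dots,q\}$ and $P_q$ its $q\times q$ permutation matrix. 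Set $P_m=\diag(P_q,I_{m-q})\in O(m)$ and $P_n=\diag(P_q,I_{n-q})\in O(n)$. A block computation gives $P_m\,\Diag x\,P_n^\top=\Diag(P_qx)$, since the zero blocks are untouched. Orthogonal invariance then yields $f(P_qx)=f(x)$.
\end{itemize}

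For the identity $F=f\circ\sigma$, take any $X\in\bR^{m\times n}$ and a full SVD $X=U\Diag(\sigma(X))V^\top$ with $U\in O(m)$ and $V\in O(n)$; such a decomposition always exists. Since $U^\top\in O(m)$ and $V\in O(n)$, orthogonal invariance gives
\[
F(X)=F(U^\top X V)=F(\Diag(\sigma(X)))=f(\sigma(X)).
\]

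I expect no genuine obstacle here. The only point needing care is the rectangular bookkeeping: the permutation and sign matrices must be padded by identities so that they are square orthogonal matrices of the correct sizes $m$ and $n$, and one must check that they act on $\Diag x$ only through its first $q$ diagonal entries. The block computation above handles this.
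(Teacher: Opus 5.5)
Your proof is correct and complete. Padding the signed permutations by identities so that they act only on the leading $q\times q$ diagonal block gives absolute symmetry of $f$, and a full SVD yields $F=f\circ\sigma$. The paper does not prove this itself; it cites \cite[Proposition 2.2]{lewis1995convex}, which rests on essentially the same argument as yours.
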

We now recall some notions from the majorization literature. Let $x,y\in\bR^n$ and denote $x^\downarrow$ for the vector obtained by rearranging the entries of $x$ in nonincreasing order, i.e., $x_1^\downarrow \geq \cdots \geq x_n^\downarrow$. Recall that $x$ is weakly majorized by $y$, in symbols $x\prec_w y$, if 
\[
    \sum_{i=1}^k x_i^\downarrow \leq \sum_{i=1}^k y_i^\downarrow, \quad 1\leq k\leq n.
\]
\begin{lemma}[{\cite[Exercise II.2.10]{bhatia2013matrix}}] \label{lem:weak_major_nonneg}
    Let $x, y \in \bR^n$. If $x\geq 0$, $y\geq0$, then
    \[
        x \prec_w y \Longleftrightarrow x\in \conv C_y
    \]
    where $\conv C_y$ denotes the convex hull of $C_y = \{Qy \in \bR^n \mid Q \in \Lambda_n\}$. Here, $\Lambda_n$ denotes the set of all $2^nn!$ generalized permutation matrices, which have exactly one nonzero entry in each row and each column, being either $+1$ or $-1$.
\end{lemma}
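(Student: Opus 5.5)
We prove the two implications separately. For the reverse implication we use that top-$k$ sums are convex and invariant under signed permutations. For the forward implication we use a separating-hyperplane argument, computing the support function of $\conv C_y$ explicitly with the rearrangement inequality and Abel summation. Throughout we write $S_k(z) = \sum_{i=1}^k z_i^\downarrow$ for $z\in\bR^n$, so that $x\prec_w y$ means $S_k(x)\le S_k(y)$ for $1\le k\le n$.

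\emph{($\Leftarrow$).} Since $S_k(z)=\max_{|I|=k}\sum_{i\in I} z_i$, the map $S_k$ is a maximum of linear functionals and hence convex. We also have $S_k(z)\le S_k(|z|)$, where $|z|$ denotes the entrywise absolute value. For $Q\in\Lambda_n$ and $y\ge 0$, the vector $|Qy|$ is a permutation of $y$, so $S_k(Qy)\le S_k(|Qy|)=S_k(y)$. Now let $x=\sum_j\lambda_j Q_j y$ be a convex combination with $Q_j\in\Lambda_n$. By convexity, $S_k(x)\le \sum_j\lambda_j S_k(Q_jy)\le S_k(y)$ for every $k$, that is, $x\prec_w y$. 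Nonnegativity of $x$ is not needed in this direction.

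\emph{($\Rightarrow$).} Suppose $x\ge 0$, $x\prec_w y$, and assume for contradiction that $x\notin\conv C_y$. The set $C_y$ is finite, so $\conv C_y$ is compact and convex. Strict separation therefore gives $a\in\bR^n$ with
\[
\langle a,x\rangle>\max_{Q\in\Lambda_n}\langle a,Qy\rangle .
\]
The key step is computing the right-hand side. Because $y\ge 0$, choosing the signs of $Q$ to match those of $a$, and the permutation to pair the largest $|a_i|$ with the largest $y_i$, gives by the rearrangement inequality
\[
\max_{Q\in\Lambda_n}\langle a,Qy\rangle=\sum_i |a|^\downarrow_i\,y^\downarrow_i .
\]
On the other side, $x\ge 0$ gives $\langle a,x\rangle\le\langle |a|,x\rangle$, and the rearrangement inequality gives $\langle |a|,x\rangle\le\sum_i |a|^\downarrow_i x^\downarrow_i$.

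Set $b=|a|^\downarrow$ with the convention $b_{n+1}=0$, so that $b_k-b_{k+1}\ge 0$ for all $k$. Abel summation gives
\[
\sum_i b_i x^\downarrow_i=\sum_{k=1}^n (b_k-b_{k+1})\,S_k(x)\le \sum_{k=1}^n (b_k-b_{k+1})\,S_k(y)=\sum_i b_i y^\downarrow_i ,
\]
where the inequality uses $x\prec_w y$ termwise. Chaining these estimates yields $\langle a,x\rangle\le\max_Q\langle a,Qy\rangle$, contradicting the separation, so $x\in\conv C_y$.

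The only delicate point is the support-function identity for $\conv C_y$. It relies on $y\ge 0$, so that $y^\downarrow=|y|^\downarrow$. The bound $\langle a,x\rangle\le\langle|a|,x\rangle$ relies on $x\ge 0$.
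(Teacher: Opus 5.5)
Your proof is correct and complete; the paper gives no proof of this lemma and simply cites it from Bhatia, so the comparison is with the standard textbook argument.

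That standard route is matrix-theoretic:
\begin{itemize}
\item for $x,y\geq 0$, $x\prec_w y$ holds iff $x=By$ for some doubly substochastic $B$;
\item a Birkhoff-type theorem writes $B$ as a convex combination of partial permutation matrices;
\item each partial permutation applied to $y\geq 0$ lands in $\conv C_y$, since a zeroed coordinate is the average of $+y_j$ and $-y_j$.
\end{itemize}

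You argue by duality instead. For the reverse direction you use convexity of the top-$k$ sums $S_k$ together with $S_k(Qy)\leq S_k(y)$. For the forward direction you combine strict separation, an explicit support-function computation for $\conv C_y$ via the rearrangement inequality, and Abel summation with the weights $b_k-b_{k+1}$. The last weight $b_n$ is nonnegative precisely because you pass to $|a|$, which is where $x\geq 0$ is used.

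Your route is self-contained, needing only the separation theorem and rearrangement. It also makes visible where each sign hypothesis enters. The matrix route is heavier but constructive, producing explicit convex weights. For the paper's purposes the difference does not matter: \cref{lem:mirsky_general} only uses the forward implication, as an existence statement, with both vectors nonnegative.

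One minor refinement: the forward implication does not actually need $y\geq 0$. Since $S_k(y)\leq S_k(|y|)$, your Abel-summation step can compare against $\sum_i b_i|y|^\downarrow_i$, which is the support function of $\conv C_y$ for arbitrary $y$.
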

Following result describes a generalization of \cite[Problem III.6.13]{bhatia2013matrix} to rectangular matrices and absolutely symmetric, convex, and extended real-valued functions.

\begin{lemma} \label{lem:mirsky_general}
    Let $X,Y\in\bR^{m\times n}$ and denote $q=\min(m,n)$. If $f : \bR^q \to \exR$ is absolutely symmetric and convex, then
    \[
        f(\sigma(X) - \sigma(Y)) \leq f(\sigma(X - Y)).
    \]
\end{lemma}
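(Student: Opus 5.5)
We need to show $f(\sigma(X)-\sigma(Y)) \le f(\sigma(X-Y))$ for $f$ absolutely symmetric and convex, possibly extended-valued. We combine a singular-value perturbation inequality with \cref{lem:weak_major_nonneg}, and then use convexity together with invariance under signed permutations.

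\textbf{Step 1 (weak majorization).} Set $a = |\sigma(X)-\sigma(Y)|$, the entrywise absolute value, and $b = \sigma(X-Y)$. Both vectors are nonnegative. We claim $a \prec_w b$. This is Mirsky's singular value inequality for rectangular matrices: for every $k \le q$, the sum of the $k$ largest entries of $|\sigma(X)-\sigma(Y)|$ is at most $\sum_{i=1}^k \sigma_i(X-Y)$.

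To justify it, embed into the symmetric Jordan--Wielandt matrices
\[
\widehat X = \begin{pmatrix} 0 & X \\ X^\top & 0\end{pmatrix}.
\]
The eigenvalues of $\widehat X$ are $\pm\sigma_i(X)$, padded with zeros. Lidskii's theorem for Hermitian matrices states that $\lambda(\widehat X)-\lambda(\widehat Y)$ is majorized by $\lambda(\widehat X-\widehat Y)$, with eigenvalues ordered nonincreasingly. The eigenvalue vector of $\widehat X - \widehat Y$ is $(\sigma(X-Y),0,\dots,0,-\sigma(X-Y))$.

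The entries of $\lambda(\widehat X)-\lambda(\widehat Y)$ are $\pm(\sigma_i(X)-\sigma_i(Y))$ together with zeros. Taking the $k$ largest entries of this difference therefore yields the $k$ largest values of $|\sigma_i(X)-\sigma_i(Y)|$. The corresponding partial sums of $\lambda(\widehat X-\widehat Y)$ for $k\le q$ equal $\sum_{i\le k}\sigma_i(X-Y)$, which gives the claim.

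This is the step I expect to be the main obstacle. Getting the bookkeeping of the padding and signs exactly right requires care; alternatively, I would cite Mirsky's theorem directly (e.g.\ Bhatia, Theorem IV.3.4, in its rectangular form).

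\textbf{Step 2 (convex hull representation).} By \cref{lem:weak_major_nonneg}, $a \in \conv C_b$. So there exist weights $t_j \ge 0$ with $\sum_j t_j = 1$ and signed permutation matrices $Q_j \in \Lambda_q$ such that $a = \sum_j t_j Q_j b$.

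\textbf{Step 3 (apply convexity and symmetry).} Absolute symmetry gives $f(Qz) = f(z)$ for every $Q\in\Lambda_q$. Indeed, $|Qz|^\downarrow = |z|^\downarrow$, so $f(Qz) = f(|z|^\downarrow) = f(z)$.

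Hence $f(\sigma(X)-\sigma(Y)) = f(a)$, since $a$ is obtained from $\sigma(X)-\sigma(Y)$ by flipping signs. By convexity (Jensen for finitely many points, which holds for extended-valued convex $f$ with the convention $+\infty$ absorbing),
\[
f(a) \le \sum_j t_j f(Q_j b) = f(b) = f(\sigma(X-Y)).
\]
If $f(b)=+\infty$ the inequality is trivial. Otherwise every $f(Q_jb)=f(b)$ is finite and Jensen applies. This concludes the proof.
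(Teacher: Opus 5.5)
Your proposal is correct and follows the paper's proof essentially step for step: sign-flip invariance to pass to $|\sigma(X)-\sigma(Y)|$, the weak majorization $|\sigma(X)-\sigma(Y)|\prec_w\sigma(X-Y)$, \cref{lem:weak_major_nonneg} to write this vector as a convex combination of signed permutations of $\sigma(X-Y)$, and finally convexity together with absolute symmetry. The only difference is that the paper cites the rectangular Mirsky inequality directly (Horn--Johnson, Theorem 3.4.5), whereas you derive it from Lidskii's theorem via the Jordan--Wielandt embedding; that derivation, including the padding-and-sign bookkeeping, is correct.
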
\begin{proof}
    Note that $|\sigma(X) - \sigma(Y)| = Q(\sigma(X) - \sigma(Y))$, with $Q\in\Lambda_{q}$ a diagonal matrix such that each diagonal entry is the sign of the corresponding entry in $\sigma(X) - \sigma(Y)$ (and using the convention $\sign(0)=1$). Since $f$ is absolutely symmetric, we have that $f(Qx) = f(x)$ for all $x\in\bR^q$ and all $Q\in\Lambda_q$, see \cite[Eq.\,(2.5)]{lewis1995convex}, and hence $f(\sigma(X) - \sigma(Y)) = f(|\sigma(X) - \sigma(Y)|)$ .

    Further, it follows from \cite[Theorem 3.4.5]{horn1994topics} that
    \[
        |\sigma(X) - \sigma(Y)| \prec_w \sigma(X-Y).
    \]
    Since $ |\sigma(X) - \sigma(Y)| \geq 0, \sigma(X-Y)\geq0$, \cref{lem:weak_major_nonneg} ensures $|\sigma(X) - \sigma(Y)| \in \conv C_{\sigma(X-Y)}$, and as such $|\sigma(X) - \sigma(Y)| = \sum_{i=1}^N \lambda_i Q_i \sigma(X-Y)$ with $Q_i\in\Lambda_{q}$, $\sum_{i=1}^N \lambda_i = 1$ and $0\leq\lambda_i\leq 1$ for each $i=1,\ldots,N$ where $N = \#C_{\sigma(X-Y)}$.
    
    Lastly, using convexity, and absolute symmetry of $f$ again, we obtain
    \[
        f(\sigma(X) - \sigma(Y)) = f\left(\sum_{i=1}^N \lambda_i Q_i \sigma(X-Y)\right) \\
        \leq \sum_{i=1}^N \lambda_i f(Q_i\sigma(X-Y)) = f(\sigma(X - Y)).
    \]
\end{proof}

\subsection{Computing the anisotropic projection for the \texorpdfstring{$\ell_2$}{l2}-ball constraint} \label{sec:aniso_ball}
We aim to compute
\[
    x^{k+1} \in \argmin_{x \in \bR^n}\; \delta_{\|\cdot\|_2\leq r}(x) + (\gamma_k \star \phi)(x - y^k)
\]
where $\phi(x) = \sum_{i=1}^n h(x_i)$ and $h$ satisfies \Cref{assum:sc}.

If $\|x^k\|_2 \leq r$, then $x^k - y^k = \gamma_k\nabla\phi^*(d^k) \in \intr \dom (\gamma_k \star \phi)$, so the problem has nonempty domain, and since it is moreover strongly convex, it admits a unique minimizer.

Clearly, if $\|y^k\|_2 \leq r$, then $x^{k+1} = y^k$ since $\phi \geq 0$ and $\phi(0) = 0$. If $\|y^k\|_2 > r$, then the KKT conditions require
\[
    h'(\tfrac{1}{\gamma_k}(x_i - y_i^k)) = -2\lambda x_i, \quad i=1,\ldots n \quad \textnormal{and} \quad  \sum_{i=1}^n x_i^2 = r^2 \\
\]
for some $\lambda \geq0$. The first equation is equivalent to $x_i + \gamma_k (h^*)'(2\lambda x_i) = y_i$, which has a unique solution by \cite[Proposition 23.8(iii)]{bauschke2017correction} and the maximal monotonicity of $(h^*)'(2\lambda x)$, see \cite[Theorem 20.25]{bauschke2017correction}. Denoting this solution by $x_i(\lambda)$ and putting this back in the constraint, we arrive at the nonlinear equation
\[
    \sum_{i=1}^n x_i(\lambda)^2 = r^2
\]
in the variable $\lambda$. Further noticing that $|x_i(\lambda)|$ is a decreasing function of $\lambda$ allows us to solve this one-dimensional equation through bisection.

As an example, when $h$ is chosen as in \Cref{example:norm_grad}, then $x_i(\lambda)$ requires solving $x_i + \gamma_k \tfrac{2\lambda x_i}{\varepsilon + |2\lambda x_i|} = y_i^k$ with $\varepsilon > 0$. It is easy to see that $x_i$ will have the same sign as $y_i^k$, so we distinguish between two cases, both of which reduce to finding the root of a quadratic polynomial where the choice of root follows by matching the sign:
\[
    x_i = \sign(y_i^k)\frac{-(\varepsilon + 2\gamma_k\lambda - 2|y_i^k|\lambda) + \sqrt{(\varepsilon + 2\lambda \gamma_k - 2 |y_i^k|\lambda )^2 + 8\lambda\varepsilon |y_i^k|}}{4\lambda}.
\]
The extension to the Frobenius ball constraint for spectral anisotropic reference functions is a straightforward application of \Cref{thm:aniso_prox_spectral}.

\subsection{Computing the anisotropic projection for the \texorpdfstring{$\ell_\infty$}{linfty}-sphere constraint} \label{sec:aniso_proj_linfty_sphere}
We aim to compute
\[
    x^{k+1} \in \argmin_{x \in \bR^n}\; \delta_{\|\cdot\|_\infty=r}(x) + (\gamma_k \star \phi)(x - y^k)
\]
where $\phi(x) = \sum_{i=1}^n h(x_i)$ and $h$ satisfies \Cref{assum:sc}.

We distinguish between two cases. If $\|y^k\|_{\infty} \geq r$, then the solution is clearly given by clipping each entry to the interval $[-r, r]$. Otherwise, the projection consists of finding any element of $y^k$ with maximum magnitude in absolute value, say with index $j$, and changing it to $\sign(y_j^k)r$.

The extension to the spectral sphere constraint for spectral anisotropic reference functions is a straightforward application of \Cref{thm:aniso_prox_spectral}. This process is visualized in \cref{fig:bck_steps_vis}.

\begin{figure}[!t]
    \centering
    \includegraphics[width=0.6\linewidth]{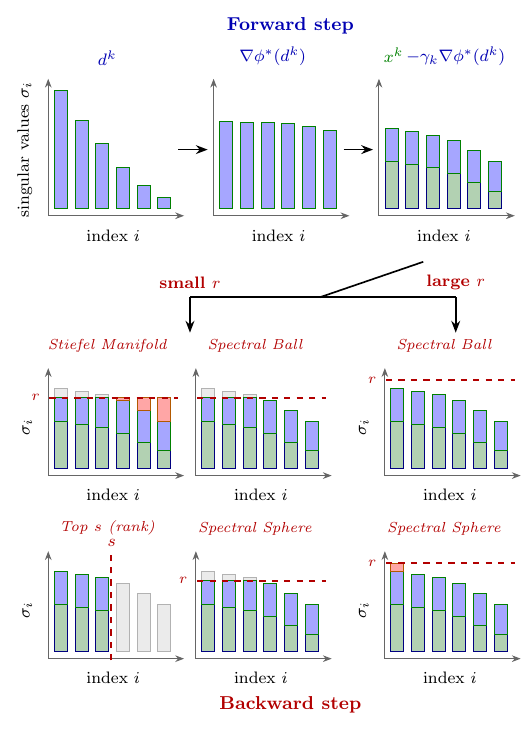}
    \caption{Demonstration of proximal spectral preconditioning with several different \textcolor{\colbck}{backward} steps, summarized also in \cref{tab:back_step_euc}.}
    \label{fig:bck_steps_vis}
\end{figure}

\section{Deterministic analysis} \label{app:det}
In this section we analyze the convergence guarantees of the scheme described in \eqref{eq:update}, extending the analysis of \cite{laude2025anisotropic} to our setting where $\phi$ possibly does not have full domain. Since we are in the deterministic setting, we have $d^k = \nabla f(x^k)$, and for fixed stepsizes $\gamma_k = \gamma$, the update becomes
\begin{equation*}
    \begin{aligned}
        y^k &= x^k - \gamma \nabla \phi^*(\nabla f(x^k))
        \\
        x^{k+1} & \in \argmin_{x \in E}\; g(x) + (\gamma \star \phi)(x-y^k),
    \end{aligned}
\end{equation*}
starting from $x^0 \in \dom g$. In this setting, the main assumption on $f$ is anisotropic smoothness.
\begin{definition}[Anisotropic smoothness]
    Let $f \in \cC^1(E)$. We say that $f$ is anisotropically smooth relative to $\phi$ with constant $L > 0$ if for all $x, \bar x \in E$
    \begin{equation} \label{eq:extended_ad} 
        \begin{aligned}
            f(x) &\leq f(\bar x) + (\tfrac{1}{L} \star\phi)(x - \bar y) - (\tfrac{1}{L} \star \phi) (\bar x - \bar y),
        \end{aligned}
    \end{equation}
    where $\bar y = \bar x - L^{-1} \nabla \phi^*(\nabla f(\bar x))$.
\end{definition}
\begin{assumption}\label{assum:f}
    $f$ is anisotropically smooth relative to $\phi$ with constant $L > 0$.
\end{assumption}
Moving on to $g$, we will work under the standard setting of \emph{prox-boundedness} from \cite[Definition 1.23]{RoWe98}:
\begin{definition}
    A function $g:E \to \exR$ is prox-bounded if there exists $\gamma > 0$ such that $\inf_{x \in E} g(x) + \tfrac{1}{2\gamma}\|x-\bar x\|^2 > -\infty$ for some $\bar x \in E$. The supremum of the set of all such $\gamma$ is the threshold $\gamma_g$ of prox-boundedness of $g$.
\end{definition}
We will utilize prox-boundedness in order to prove the continuity of the \emph{anisotropic Moreau envelope}, defined as
\begin{equation}
    g^\phi_\gamma(y) := \inf_{x \in E} g(x) + (\gamma \star \phi)(x-y) = \inf_{x\in E}g(x) + (\gamma \star \phi)(y-x),
\end{equation}
where the equality follows by the fact that $\phi$ is even. It is clear thus that $g^\phi_\gamma$ is an \emph{infimal convolution} \cite[Definition 12.1]{bauschke2017correction}, also known as \emph{epi-addition} \cite[p.\,23]{RoWe98}. In light of \cite[Proposition 12.6(ii)]{bauschke2017correction}, $\dom g^\phi_\gamma = \dom g + \gamma \dom \phi$. We next show the continuity of $g^\phi_\gamma$ relative to its domain. Our analysis is a specialization of the analysis in \cite{laude2019optimization,laude2021lower}:
\begin{proposition} \label{thm:prox_bound}
    Let $g$ be proper, lsc and prox-bounded with threshold $\gamma_g$. Then, for any $\gamma \in (0,\mu \gamma_g)$, the function $g^\phi_\gamma$ is continuous relative to $\dom g^\phi_\gamma = \dom g + \gamma \dom \phi$.
\end{proposition}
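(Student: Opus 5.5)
We prove lower and upper semicontinuity of $g^\phi_\gamma$ relative to $\dom g + \gamma\dom\phi$ separately, following the template of \cite[Theorem 1.25]{RoWe98} and \cite{laude2019optimization,laude2021lower}.

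\textbf{Step 1: a quadratic lower bound.} Since $\phi$ is $\mu$-strongly convex with $\phi(0)=0$ and $\nabla\phi(0)=0$ (\cref{thm:conv_conj}), we have $\phi(z)\ge \tfrac{\mu}{2}\|z\|^2$. Hence $(\gamma\star\phi)(x-y)\ge \tfrac{\mu}{2\gamma}\|x-y\|^2$. Fix $\gamma<\mu\gamma_g$ and pick $\gamma'$ with $\gamma/\mu<\gamma'<\gamma_g$. Prox-boundedness then gives constants $a,b$ with $g(x)\ge -a-b\|x\|^2$ for some $b<\tfrac{1}{2\gamma'}<\tfrac{\mu}{2\gamma}$ (\cite[Exercise 1.24]{RoWe98}). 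Consequently, for $y$ in a bounded set $B$, the function $x\mapsto g(x)+(\gamma\star\phi)(x-y)$ is bounded below by $-a-b\|x\|^2+\tfrac{\mu}{2\gamma}\|x-y\|^2$. This is uniformly level-bounded in $x$, locally uniformly in $y$, and $g^\phi_\gamma>-\infty$. The objective is also lsc in $(x,y)$, since both $g$ and $\phi$ are lsc.

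\textbf{Step 2: lower semicontinuity.} Lsc of the objective jointly in $(x,y)$, together with uniform level-boundedness, lets us invoke \cite[Theorem 1.17]{RoWe98}. It gives that $g^\phi_\gamma$ is lsc on $E$, that it is proper, and that the argmin is nonempty.

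\textbf{Step 3: upper semicontinuity relative to the domain.} This is the main obstacle, because $\phi$ is not finite everywhere and $\phi$ blows up at the boundary of $\dom\phi$. Take $y^k\to y$ with all $y^k,y\in\dom g+\gamma\dom\phi$. Choose a minimizer $\bar x$ at $y$, so that $(y-\bar x)/\gamma\in\dom\phi$.

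If $(y-\bar x)/\gamma\in\intr\dom\phi$, then $(y^k-\bar x)/\gamma$ lies in the interior for $k$ large. Continuity of $\phi$ on $\intr\dom\phi$ then yields
\[
\limsup_k g^\phi_\gamma(y^k)\le g(\bar x)+\lim_k(\gamma\star\phi)(y^k-\bar x)=g^\phi_\gamma(y).
\]

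The boundary case must be ruled out or handled. Here I would first use the optimality condition $0\in\partial g(\bar x)+\nabla(\gamma\star\phi)(\bar x-y)$ together with the essential smoothness in \cref{assum:sc}, which makes $\partial\phi$ empty on the boundary. To make this rigorous for nonconvex $g$, I would argue by perturbation. Let $z=(y-\bar x)/\gamma\in\dom\phi$ and consider $z_t=(1-t)z\in\intr\dom\phi$. This holds because $\dom\phi$ is convex and symmetric, $0\in\intr\dom\phi$ since $\phi$ is even and $\intr\dom\phi\ne\emptyset$, and the line-segment principle applies. Convexity gives $\phi(z_t)\le(1-t)\phi(z)$. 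Set $x_t=y-\gamma z_t$; then $g(x_t)+\gamma\phi(z_t)$ is a competitor whose $\phi$-term is continuous in $y$. The difficulty is that $g(x_t)$ need not be close to $g(\bar x)$.

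I therefore expect to need the fact that minimizers cannot sit on the boundary. Using $\|\nabla\phi(z_t)\|\to\infty$, one shows that moving from $z$ toward $z_t$ decreases $\gamma\phi$ at an unbounded rate relative to $t$. The key ingredient is $\phi(z)-\phi(z_t)\ge t\langle\nabla\phi(z_t),z\rangle$, with $\langle\nabla\phi(z_t),z\rangle\to\infty$ by monotonicity and essential smoothness. Compare this with the change in $g$ along the segment.

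If this fails for general $g$, the fallback is to restrict the class of admissible $y^k$. Continuity relative to the domain would then be obtained by choosing, for each $y^k$, the point $x^k=\bar x+(y^k-y)$-type shifts within $\dom g$ as needed. Combining the resulting $\limsup$ with Step 2 gives continuity.
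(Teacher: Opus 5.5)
Your Steps 1--2 are correct and match the paper. The paper likewise shows that $g(x)+(\gamma\star\phi)(y-x)$ is level-bounded in $x$ locally uniformly in $y$, using a contradiction argument instead of your explicit minorant $-a-b\|x\|^2+\tfrac{\mu}{2\gamma}\|x-y\|^2$, and then appeals to \cite[Theorem 1.17]{RoWe98}. Your interior case in Step 3 is also correct; it is exactly part (c) of that theorem.

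The gap is the boundary case, and the route you sketch cannot work. The fact you plan to use, that minimizers never have $(y-\bar x)/\gamma\in\bdry\dom\phi$, is false. For $g=\delta_{\{0\}}$ the unique minimizer is $\bar x=0$ for every $y$, including $y\in\gamma\,\bdry\dom\phi$. The comparison along $z_t=(1-t)z$ also fails. The decrease of $\gamma\phi$ along the segment is at most $\gamma\phi(z)<\infty$, whereas an lsc $g$ may jump up by a fixed amount, or to $+\infty$, as soon as $x_t$ leaves $\bar x$. The shift $x^k=\bar x+(y^k-y)$ gives no control over $g(x^k)$. Restricting the admissible $y^k$ only proves a weaker statement.

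In fact this step cannot be closed under \cref{assum:sc} alone, because the claim is false when $\dom\phi$ is closed. Take $E=\bR$ and $\gamma=1$. Let $\phi(z)=1-\sqrt{1-z^2}$ for $|z|\le 1$ and $\phi=+\infty$ otherwise; this is $h_2$ from \cref{app:polar_discussion} with $\varepsilon=1$, and it satisfies \cref{assum:sc} with $\mu=1$. Let $g(0)=0$, $g\equiv 10$ on $(0,1]$, and $g=+\infty$ elsewhere; $g$ is proper, lsc and bounded below, so $\gamma_g=+\infty$. Then $g^\phi_1(1)=1$, attained at $\bar x=0$. For $t\in(0,1]$ the point $x=0$ is no longer admissible, and $g^\phi_1(1+t)=11-\sqrt{1-t^2}\to 10$. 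This fails even though $1$ is an interior point of $\dom g+\dom\phi=[-1,2]$.

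The paper's proof has the same hole. Continuity in \cite[Theorem 1.17]{RoWe98} comes only from part (c), which needs a minimizer $\bar x$ such that $u\mapsto(\gamma\star\phi)(u-\bar x)$ is continuous relative to $\dom g^\phi_\gamma$ at $\bar u$. That is automatic for full-domain $\phi$ as in \cite{laude2025anisotropic}, but it is never verified here.

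What your argument does prove is continuity at every $\bar u$ that admits a minimizer $\bar x$ with $(\bar u-\bar x)/\gamma\in\intr\dom\phi$. In particular, suppose $\dom\phi$ is open, as for the log-barriers in \cref{example:norm_grad}. Then every minimizer falls into your interior case and $\dom g^\phi_\gamma$ is open. In that setting your Steps 1--3 already form a complete proof of the corrected statement.
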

\begin{proof}
    First, we are going to show that the function $g(x) + (\gamma \star \phi)(y-x)$ is level-bounded in $x$ locally uniformly in $y$ \cite[Definition 1.16]{RoWe98}. To that aim, assume the contrary: let $\gamma \in (0, \mu \gamma_g)$ and assume that there exist $\bar y \in E$, $\alpha \in \bR$ and sequences $y^\nu \to \bar y$ and an unbounded $\{x^\nu\}_{\nu \in \bN}$ such that for $\nu$ sufficiently large
    \begin{equation} \label{eq:contr}
        g(x^\nu) + \gamma \phi(\tfrac{1}{\gamma}(y^\nu - x^\nu)) \leq \alpha.
    \end{equation}
    Let $\gamma' \in (\gamma, \mu \gamma_g)$ and set $\gamma_0 = \tfrac{\gamma'}{\mu}$. Since $\gamma_0 < \gamma_g$, similarly to the proof of \cite[Theorem 1.25]{RoWe98}, there exist $\gamma_1 > \gamma_0$ and $\beta \in \bR$ such that $g(\cdot) + \tfrac{1}{2\gamma_1}\|\cdot\|^2 \geq \beta$. Let $\nu\in\bN$. It follows that
    \begin{align*}
        g(x^\nu) + \tfrac{1}{2\gamma_0}\|x^\nu - y^\nu\|^2 &= g(x^\nu) +\tfrac{1}{2\gamma_1}\|x^\nu\|^2 + \left(\tfrac{1}{2\gamma_0} - \tfrac{1}{2\gamma_1}\right)\|x^\nu\|^2 + \tfrac{1}{2\gamma_0}\|y^\nu\|^2 - \tfrac{1}{\gamma_0}\langle x^\nu, y^\nu\rangle \\
        &\geq \beta + \left(\tfrac{1}{2\gamma_0} - \tfrac{\gamma_1}{2(\gamma_1 - \gamma_0)\gamma_0} \right)\|y^\nu\|^2
    \end{align*}
    where the equality is simple algebraic manipulation and the inequality follows from Young's inequality $\langle x^\nu, y^\nu\rangle \leq \frac{\lambda^2}{2}\|x^\nu\|^2 + \frac{1}{2\lambda^2}\|y^\nu\|^2$ with $\lambda^2 = 1 - \frac{\gamma_0}{\gamma_1} > 0$. Since $\{y^\nu\}_{\nu\in\bN}$ is bounded, it follows that there exists some $\bar \beta \in \bR$ such that for all $\nu \in \bN:$
    \[
        g(x^\nu) + \tfrac{1}{2\gamma_0}\|x^\nu - y^\nu\|^2 \geq \bar \beta.
    \]
    By the strong convexity gradient inequality for $\phi$ at points $\tfrac{1}{\gamma'}(y^\nu-x^\nu)$ and $0 \in \intr \dom \phi$, and using that $\nabla \phi(0) = 0$ and $\phi(0)=0$:
    \begin{equation*}
        \gamma'\phi(\tfrac{1}{\gamma'}(y^\nu-x^\nu)) \geq \gamma' \tfrac{\mu}{2}\|\tfrac{1}{\gamma'}(y^\nu-x^\nu)\|^2 = \tfrac{1}{2 \gamma_0}\|y^\nu - x^\nu\|^2.
    \end{equation*}
    Therefore,
    \begin{equation*}
        g(x^\nu) + \gamma'\phi(\tfrac{1}{\gamma'}(y^\nu-x^\nu)) \geq \bar \beta.
    \end{equation*}
    Multiplying with $-1$ and adding to \eqref{eq:contr} yields, for $\nu$ large enough:
    \[
        \gamma \phi(\tfrac{1}{\gamma}(y^\nu - x^\nu)) - \gamma'\phi(\tfrac{1}{\gamma'}(y^\nu-x^\nu)) \leq \alpha - \bar \beta  
    \]
    and from the strong convexity inequality for $\phi$ between the points $0$ and $\tfrac{1}{\gamma}(y^\nu - x^\nu)$,
    \[
        \gamma' \phi(\tfrac{1}{\gamma'}(y^\nu-x^\nu)) = \gamma' \phi(\tfrac{\gamma}{\gamma'}\tfrac{1}{\gamma}(y^\nu-x^\nu)) \leq \gamma \phi(\tfrac{1}{\gamma}(y^\nu-x^\nu)) - \tfrac{\mu}{2}\gamma' \tfrac{\gamma}{\gamma'}(1-\tfrac{\gamma}{\gamma'})\|\tfrac{1}{\gamma}(y^\nu - x^\nu)\|^2.
    \]
    Substituting now this result in the previous inequality and taking $\nu \to \infty$ we are led to $\infty \leq \alpha - \bar \beta$, a contradiction. Therefore, $g(x) + (\gamma \star \phi)(y-x)$ is level-bounded in $x$ locally uniformly in $y$ and since it is also proper and lsc, the claimed result follows from \cite[Theorem 1.17]{RoWe98}.
\end{proof}
We can now move on to formulating the assumption on $g$.
\begin{assumption} \label{assum:g} 
    $g$ is proper, lsc and prox-bounded with threshold $\gamma_g > \tfrac{\gamma}{\mu}$.
\end{assumption}
In our analysis, we are going to utilize the regularized gap function from \cite{laude2025anisotropic}, that takes the following form:
\begin{equation} \label{eq:gap}
    \cG^\phi_\gamma(x) = \tfrac{1}{\gamma}\left [ g(x) + \gamma \phi(\nabla \phi^*(\nabla f(x))) - g^\phi_\gamma(x- \gamma \nabla \phi^*(\nabla f(x)))\right ].
\end{equation}
In the following proposition we describe the properties of $\cG^\phi_\gamma$ that make it a good stationarity measure, generalizing \cite[Lemma 5.1]{laude2025anisotropic}.
\begin{proposition} \label{thm:gap_func}
    For any $\gamma < \mu \gamma_g$, the gap function $\cG^\phi_\gamma$ is nonnegative and lsc relative to $\dom g$. Moreover, any $x^\star \in \dom g$ such that $\cG^\phi_\gamma(x^\star) = 0$ is a stationary point of $F$.
\end{proposition}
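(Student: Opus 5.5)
Set $u := \nabla \phi^*(\nabla f(x))$ and $y := x - \gamma u$. By Fenchel--Young equality, $\phi(u) = \langle u, \nabla f(x)\rangle - \phi^*(\nabla f(x))$. Moreover $u \in \intr\dom\phi$, since $u = \nabla\phi^*(\nabla f(x))$ and $\partial\phi^*(v) \subseteq \dom\partial\phi$, whose points lie in $\intr\dom\phi$ by the essential smoothness in \cref{assum:sc}. Hence $\gamma\phi(u) < \infty$, so for $x \in \dom g$ the first two terms of \eqref{eq:gap} are finite.

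\textbf{Nonnegativity.} Take $x$ itself as a candidate in the infimum defining $g^\phi_\gamma(y)$:
\[
g^\phi_\gamma(y) \le g(x) + (\gamma\star\phi)(x-y) = g(x) + \gamma\phi(u).
\]
This gives $\cG^\phi_\gamma(x) \ge 0$. The value $-\infty$ for $g^\phi_\gamma$ is excluded because $\gamma < \mu\gamma_g$; this follows from the level-boundedness argument in the proof of \cref{thm:prox_bound}.

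\textbf{Lower semicontinuity relative to $\dom g$.} Three pieces combine.
\begin{itemize}
\item $g$ is lsc.
\item $x \mapsto \gamma\phi(u(x))$ is continuous, since $\phi(\nabla\phi^*(v)) = \langle v, \nabla\phi^*(v)\rangle - \phi^*(v)$ is continuous in $v$ and $\nabla f$ is continuous.
\item $-g^\phi_\gamma(y(x))$ needs to be lsc, i.e.\ $g^\phi_\gamma \circ y$ usc. By \cref{thm:prox_bound}, $g^\phi_\gamma$ is continuous relative to $\dom g + \gamma\dom\phi$. For $x \in \dom g$ we have $y(x) = x - \gamma u(x) \in \dom g + \gamma\dom\phi$, because $-u \in \dom\phi$ ($\phi$ is even). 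Since $y(\cdot)$ is continuous, $g^\phi_\gamma \circ y$ is continuous relative to $\dom g$.
\end{itemize}
The sum of these is lsc relative to $\dom g$.

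\textbf{Zero gap implies stationarity.} If $\cG^\phi_\gamma(x^\star) = 0$, then equality holds in the nonnegativity estimate, so
\[
x^\star \in \argmin_z\; g(z) + (\gamma\star\phi)(z - y^\star).
\]
Apply Fermat's rule for the limiting subdifferential. The second term is $\cC^1$ near $x^\star$, because $(x^\star - y^\star)/\gamma = u^\star \in \intr\dom\phi$. The sum rule with a strictly differentiable function then gives
\[
0 \in \partial g(x^\star) + \nabla\phi(u^\star).
\]
Since $\nabla\phi$ and $\nabla\phi^*$ are mutually inverse on these sets, $\nabla\phi(u^\star) = \nabla\phi(\nabla\phi^*(\nabla f(x^\star))) = \nabla f(x^\star)$. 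Hence $-\nabla f(x^\star) \in \partial g(x^\star)$, which is stationarity of $F$.

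The main obstacle is the domain issue: showing $u(x) \in \intr\dom\phi$, so that $\phi$ is differentiable there and the inversion $\nabla\phi \circ \nabla\phi^* = \id$ holds. I would prove this via the Legendre-type property implied by \cref{assum:sc}: $\phi$ is essentially smooth with $\dom\partial\phi = \intr\dom\phi$.
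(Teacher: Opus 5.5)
Your proposal is correct and follows essentially the same route as the paper: nonnegativity by taking $z=x$ in the infimum defining $g^\phi_\gamma$; lower semicontinuity from the lsc of $g$ together with continuity of the other two terms, the envelope term via \cref{thm:prox_bound}; and stationarity via Fermat's rule plus the sum rule with the strictly differentiable term at $u^\star\in\intr\dom\phi$. Your two additions only make explicit steps the paper leaves implicit: you justify continuity of $\phi\circ\nabla\phi^*$ through the Fenchel--Young identity, and you spell out the inversion $\nabla\phi(\nabla\phi^*(\nabla f(x^\star)))=\nabla f(x^\star)$.
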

\begin{proof}
    To begin with, note that for any $x \in \dom g$, $\cG^\phi_\gamma(x) \in \bR$ since $x - \gamma \nabla \phi^*(\nabla f(x)) \in \dom g + \gamma \dom \phi = \dom g^\phi_\gamma$ from \cref{thm:prox_bound}. Moreover,
    \[
        g^\phi_\gamma(x - \gamma \nabla \phi^*(\nabla f(x))) = \inf_{z\in E}g(z) + (\gamma \star \phi)(z-(x - \gamma \nabla \phi^*(\nabla f(x)))) \leq g(x) + \gamma \phi(\nabla \phi^*(\nabla f(x))),
    \]
    where the inequality follows by setting $z = x$ in the infimum. Therefore, $\cG^\phi_\gamma$ is nonnegative on $\dom g$. Now, $g$ is proper and lsc, while $\nabla \phi^* \circ \nabla f$ maps $E$ to $\intr \dom \phi$ where $\phi$ is $\cC^1$ and thus continuous, implying that $\phi \circ \nabla \phi^* \circ \nabla f$ is continuous on $\dom g$. Moreover, $\gamma < \mu \gamma_g$ and thus from \cref{thm:prox_bound}, $g^\phi_\gamma \circ (\id - \gamma \nabla \phi^* \circ \nabla f)$ is continuous relative to $\dom g$. Therefore $\cG^\phi_\gamma$ is lsc relative to $\dom g$ being the sum of lsc and proper functions.

    For the final claim, consider a point $x^\star \in \dom g$ such that $\cG^\phi_\gamma(x^\star) = 0$. Then, we have
    \begin{equation*}
        g(x^\star) + \gamma \phi(\nabla \phi^*(\nabla f(x^\star))) = \inf_{z\in \bR^n}g(z) + (\gamma \star \phi)(z-x^\star+\gamma \nabla \phi^*(\nabla f(x^\star))),
    \end{equation*}
    i.e., the infimum is achieved at $x^\star$. Therefore, from the modern version of Fermat's theorem \cite[Proposition 1.114]{Mordukhovich2006} we have that
    \begin{equation*}
        0 \in \partial (g + (\gamma \star \phi)(\cdot-x^\star+\gamma \nabla \phi^*(\nabla f(x^\star))))(x^\star).
    \end{equation*}
    Now note that $\tfrac{1}{\gamma}(x^\star - x^\star+ \gamma \nabla \phi^*(\nabla f(x^\star))) = \nabla \phi^*(\nabla f(x^\star)) \in \intr \dom \phi$. From the definition of the interior of a set we have the existence of a neighborhood $B \subset \intr \dom \phi$ of $\nabla \phi^*(\nabla f(x^\star))$ such that $\phi$ is $\cC^1(B)$ by \cref{assum:sc}. Therefore, in light of \cite[p.\,19]{Mordukhovich2006} it is strictly differentiable at $x^\star$. Since, moreover, $g$ is finite at $x^\star$ we obtain from \cite[Proposition 1.107]{Mordukhovich2006}:
    \begin{equation*}
        0 \in \partial g(x^\star) + \nabla f(x^\star).
    \end{equation*}
    This then implies $0 \in \partial F(x^\star)$ which concludes our proof.
\end{proof}
We can now move on to the convergence result.
\begin{theorem}\label{thm:conv_result} Suppose that \Cref{assum:f,assum:g} hold. 
    Let $\{x^k\}_{k \in \bN_0}$ be the sequence of iterates generated by \eqref{eq:update} with $d^k = \nabla f(x^k)$ and $\gamma \leq \tfrac{1}{L}$. Then, the following sufficient decrease property holds true for all $k \in \bN$:
    \begin{equation} \label{eq:suff_decr}
        F(x^{k+1}) \leq F(x^k) - \gamma\cG^\phi_\gamma(x^k).
    \end{equation}
    Moreover, every limit point of $\{x^k\}_{k \in \bN}$ is a stationary point of $F$ and the minimum over the past gaps vanishes sublinearly:
    \begin{equation}
        \min_{k \in \{0,1,\dots,K-1\}}\cG^\phi_\gamma(x^k) \leq \frac{1}{\gamma K}(F(x^0)- F_\star)
    \end{equation}
\end{theorem}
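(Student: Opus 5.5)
The argument is the standard majorization--minimization argument: the anisotropic descent inequality supplies the upper model, and the proximal step minimizes that model plus $g$.

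\textbf{Step 1 (upper bound).} Fix $k$ and set $\bar x = x^k$. With $\gamma\le 1/L$, I would first reduce the descent inequality \eqref{eq:extended_ad} from constant $1/L$ to stepsize $\gamma$, i.e., show for all $x$
\[
f(x)\le f(x^k) + (\gamma\star\phi)(x-y^k) - (\gamma\star\phi)(x^k-y^k),
\qquad y^k = x^k-\gamma\nabla\phi^*(\nabla f(x^k)).
\]
This is where I expect the main obstacle. Anisotropic smoothness with constant $L$ should imply it with any larger constant $1/\gamma\ge L$, as in \cite{laude2025anisotropic}. Since $x^k-y^k=\gamma\nabla\phi^*(\nabla f(x^k))$, the right-hand side equals
\[
f(x^k)+\gamma\bigl[\phi(\tfrac{x-x^k}{\gamma}+u)-\phi(u)\bigr], \qquad u=\nabla\phi^*(\nabla f(x^k)).
\]
This is $f(x^k)$ plus $\gamma$ times a convex function of $(x-x^k)/\gamma$ that vanishes at $0$ with gradient $\nabla f(x^k)$. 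I would argue that the perspective-type map $t\mapsto t[\phi(w/t+u)-\phi(u)]$ is nonincreasing in $t$, using convexity of $\phi$. This gives monotonicity in the stepsize, with $t=\gamma\le 1/L$ and $w=x-x^k$. If $\phi(w/t+u)=\infty$, the bound is trivial. If that route fails, I would restrict to $\gamma=1/L$ or take the $\gamma$-version as the working form of \cref{assum:f}.

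\textbf{Step 2 (sufficient decrease).} Add $g(x)$ to both sides of the Step~1 bound and take the infimum over $x$. By definition $x^{k+1}$ attains the infimum, which equals $g^\phi_\gamma(y^k)$; attainment holds by the level-boundedness shown in the proof of \cref{thm:prox_bound}. This gives
\[
F(x^{k+1})\le f(x^k)+g^\phi_\gamma(y^k)-\gamma\phi(\nabla\phi^*(\nabla f(x^k))).
\]
The right-hand side equals $F(x^k)-\gamma\cG^\phi_\gamma(x^k)$ by \eqref{eq:gap}, which is \eqref{eq:suff_decr}. Induction keeps $x^k\in\dom g$, since $F(x^{k+1})<\infty$.

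\textbf{Step 3 (rate and limit points).} Telescope \eqref{eq:suff_decr} over $k=0,\dots,K-1$ and use $F\ge F_\star$ to get
\[
\gamma\sum_{k<K}\cG^\phi_\gamma(x^k)\le F(x^0)-F_\star .
\]
Since the gaps are nonnegative by \cref{thm:gap_func}, the minimum bound follows, and moreover $\cG^\phi_\gamma(x^k)\to0$.

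For a limit point $x^{k_j}\to x^\star$: since $F(x^k)$ is nonincreasing and $g$ is lsc, $g(x^\star)\le\liminf g(x^{k_j})<\infty$, so $x^\star\in\dom g$. Lower semicontinuity of $\cG^\phi_\gamma$ relative to $\dom g$ (\cref{thm:gap_func}, valid since $\gamma<\mu\gamma_g$) then gives $0\le\cG^\phi_\gamma(x^\star)\le\liminf\cG^\phi_\gamma(x^{k_j})=0$. The last claim of \cref{thm:gap_func} then shows $x^\star$ is stationary.
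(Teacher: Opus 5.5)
Your proof is correct and follows the paper's argument. You use the anisotropic descent inequality at stepsize $\gamma$, evaluated at the proximal point $x^{k+1}$, to obtain the sufficient decrease, and then finish with telescoping plus the lower semicontinuity of $\cG^\phi_\gamma$ relative to $\dom g$. There are two minor differences: the paper cites the monotonicity in the stepsize from the literature, whereas you prove it yourself (your argument that $t\mapsto t[\phi(w/t+u)-\phi(u)]$ is nonincreasing is valid, so no fallback is needed), and you explicitly check $x^\star\in\dom g$ for limit points, which the paper takes for granted.
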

\begin{proof}
    Since $\frac{1}{\gamma} \geq L$, we have by the monotonicity property of anisotropic smoothness under episcaling \cite[Proposition B.3]{oikonomidis2025nonlinearly} that 
    \begin{align*}
        F(x^{k+1}) 
        &\leq f(x^k) + (\gamma \star \phi)(x^{k+1} - x^k + \gamma \nabla \phi^*(\nabla f(x^k))) - \gamma \phi(\nabla \phi^*(\nabla f(x^k))) + g(x^{k+1})
        \\
        &=F(x^k) - (g(x^k) + \gamma \phi(\nabla \phi^*(\nabla f(x^k))) \\
        &\mspace{220mu}- (\gamma \star \phi)(x^{k+1} - x^k + \gamma \nabla \phi^*(\nabla f(x^k))) - g(x^{k+1}))
        \\
        &= F(x^k) - \gamma\cG^\phi_\gamma(x^k),
    \end{align*}
    where the first equality follows by adding and subtracting $g(x^k)$ and the second by the update of $x^{k+1}$. 

    Summing up \eqref{eq:suff_decr} until $K-1$ we get
    \begin{equation*}
        \sum_{k=0}^{K-1}\cG^\phi_\gamma(x^k) \leq \tfrac{1}{\gamma}(F(x^0)-F_\star).
    \end{equation*}
    The bound on the stationarity measure follows by the inequality \[\sum_{k=0}^{K-1}\cG^\phi_\gamma(x^k) \geq K \min_{k\in \{0,1,\dots, K-1\}} \cG_\gamma^\phi (x^k).\]
    This implies that
    \begin{equation}
        \lim_{k \to \infty}\cG^\phi_\gamma(x^k) = 0.
    \end{equation}
    Now let $x^\star$ be a limit point of $\{x^k\}_{k \in \bN_0}$ and $x^{k_j} \to x^\star$ be a corresponding subsequence. Since $\cG_\gamma^\phi$ is lsc relative to $\dom g$ and $x^k,x^\star \in \dom g$ we have
    \begin{equation*}
        0\leq \cG^\phi_\gamma(x^\star) \leq \lim_{j \to \infty}\cG^\phi_\gamma(x^k) = 0.
    \end{equation*}
    Therefore, $\cG^\phi_\gamma(x^\star) = 0$ and from \cref{thm:gap_func} we have that $x^\star$ is a stationary point of $F$.
\end{proof}

If $\dom \phi$ is bounded, then we have the following properties regarding the iterates of \eqref{eq:update}.
\begin{proposition} \label{thm:prox_thm}
    Let $x^{k+1}$ be generated by \eqref{eq:update} with $d^k \in E$ and suppose $\sup_{x \in \dom \phi}\|x\| = D < +\infty$. Then, there exists $\widetilde \nabla g(x^{k+1}) \in \partial g(x^{k+1})$ such that 
    \begin{equation} \label{eq:subg_char}
        x^{k+1}-x^k = -\gamma_k \nabla \phi^*(d^k) + \gamma_k \nabla \phi^*(-\widetilde \nabla g(x^{k+1})) = -\gamma_k \nabla \phi^*(d^k) - \gamma_k \nabla \phi^*(\widetilde \nabla g(x^{k+1})).
    \end{equation}
    This further implies that $\|x^{k+1}-x^k\| \leq 2\gamma_k D$. Moreover, 
    \begin{equation}
        \langle d^k,x^{k+1}-x^k \rangle \leq -\tfrac{\mu}{2\gamma_k}\|x^{k+1}-x^k\|^2 + g(x^k) - g(x^{k+1}).
    \end{equation}
\end{proposition}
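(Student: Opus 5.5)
The plan is to read everything off the first-order optimality condition of the proximal subproblem in \eqref{eq:update}, using the same Fermat/sum-rule argument as in the proof of \cref{thm:gap_func}, and then convert the resulting subgradient relation into the two claimed estimates through conjugate duality for $\phi$.

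\textbf{Step 1: optimality condition.} By \cref{thm:prox_bound} and prox-boundedness, the minimum in the backward step is attained, so $x^{k+1}$ exists.

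\begin{itemize}
\item First note that $x^{k+1}-y^k \in \gamma_k \dom \phi$, since otherwise the objective is $+\infty$. Finiteness is guaranteed because $x=x^k$ gives the finite value $g(x^k)+\gamma_k\phi(\nabla\phi^*(d^k))$.
\item The delicate point is to show that $u := \tfrac{1}{\gamma_k}(x^{k+1}-y^k)$ lies in $\intr\dom\phi$. The boundary behaviour $\|\nabla\phi\|\to\infty$ in \cref{assum:sc} (essential smoothness) is what should exclude boundary points.
\item I expect this to be the main obstacle, since for nonconvex $g$ one cannot simply appeal to convex calculus. What I would try first: for convex-analytic $\phi$, essential smoothness means $\partial\phi(u)=\emptyset$ at boundary points $u$ of $\dom\phi$. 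Then use the limiting-subdifferential sum rule, valid since $\phi$ is locally Lipschitz near interior points, or alternatively a fuzzy sum rule. If $u$ were on the boundary, the optimality condition would require a limiting subgradient of $\gamma_k\star\phi$ there, which is empty or horizon-only. That should yield a contradiction with the qualification condition given by the unbounded gradients. If this is too delicate, the fallback is to assume or argue $u\in\intr\dom\phi$ directly.
\end{itemize}

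Once $u$ is interior, $\gamma_k\star\phi(\cdot-y^k)$ is strictly differentiable at $x^{k+1}$. Fermat's rule and \cite[Proposition 1.107]{Mordukhovich2006} then give $0\in\partial g(x^{k+1})+\nabla\phi(u)$. So there is $\widetilde\nabla g(x^{k+1})\in\partial g(x^{k+1})$ with $\nabla\phi(u)=-\widetilde\nabla g(x^{k+1})$.

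\textbf{Step 2: the identity \eqref{eq:subg_char}.} Invert $\nabla\phi$ via $\nabla\phi^*=(\nabla\phi)^{-1}$ on $\intr\dom\phi$, so that $u=\nabla\phi^*(-\widetilde\nabla g(x^{k+1}))$. Substituting $y^k=x^k-\gamma_k\nabla\phi^*(d^k)$ gives the first equality, and oddness of $\nabla\phi^*$ (\cref{thm:conv_conj}) gives the second.

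\textbf{Step 3: the step-length bound.} Since $\nabla\phi^*$ takes values in $\dom\phi$, both terms in \eqref{eq:subg_char} have norm at most $D$. The triangle inequality then yields $\|x^{k+1}-x^k\|\le 2\gamma_k D$.

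\textbf{Step 4: the descent-type inequality.} Compare the prox objective at $x^{k+1}$ with its value at $x^k$:
\[
g(x^{k+1})+\gamma_k\phi(u)\le g(x^k)+\gamma_k\phi(v), \qquad v:=\nabla\phi^*(d^k).
\]
Next apply the strong convexity inequality of $\phi$ at the interior point $v$, where $\nabla\phi(v)=d^k$:
\[
\phi(u)\ge\phi(v)+\langle d^k,u-v\rangle+\tfrac{\mu}{2}\|u-v\|^2.
\]
Since $\gamma_k(u-v)=x^{k+1}-x^k$, multiplying by $\gamma_k$ and combining the two inequalities gives
\[
\langle d^k,x^{k+1}-x^k\rangle+\tfrac{\mu}{2\gamma_k}\|x^{k+1}-x^k\|^2\le g(x^k)-g(x^{k+1}),
\]
which is the claim.
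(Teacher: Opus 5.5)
Your Steps 2--4 coincide with the paper's proof. For Step 1, the paper simply writes down the optimality condition $-\nabla\phi(\tfrac{1}{\gamma_k}(x^{k+1}-y^k))\in\partial g(x^{k+1})$, i.e., it takes $u\in\intr\dom\phi$ for granted. You are right that this is the crux, but the contradiction you sketch cannot be carried out. Essential smoothness only makes the \emph{convex} subdifferential of $\phi$ empty off $\intr\dom\phi$ \cite[Theorem 26.1]{rockafellar1970convex}. The horizon subdifferential at a boundary point $u$ is the nonzero normal cone $N_{\dom\phi}(u)$. So for nonconvex $g$ the limiting sum rule requires $\partial^\infty g(x^{k+1})\cap(-N_{\dom\phi}(u))=\{0\}$, which fails, e.g., at an isolated point of a constraint set, where $\partial^\infty g=E$.

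In fact, under \cref{assum:sc} alone, \eqref{eq:subg_char} can be false. Take $E=\bR^2$ and $\phi(x)=h(x_1)+h(x_2)$ with $h(t)=1-\sqrt{1-t^2}$ on $[-1,1]$ and $h=+\infty$ elsewhere. This is the $\kappa=2$, $\varepsilon=1$ member of the family in \cref{app:polar_discussion}; it satisfies \cref{assum:sc} with $\mu=1$ and $D=\sqrt{2}$, but it is finite on $\bdry\dom\phi$. Let $g=\delta_{\{c_1,c_2\}}$ with $c_1=(0,0)$ and $c_2=(0.1,-0.9)$; it is bounded below, hence prox-bounded. Take $x^k=c_1$, $\gamma_k=1$ and $d^k=\nabla\phi(0.9,0.9)$, so $y^k=(-0.9,-0.9)$. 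The prox objective is $\phi(0.9,0.9)=2(1-\sqrt{0.19})\approx 1.13$ at $c_1$ and $\phi(1,0)=1$ at $c_2$. Hence $x^{k+1}=c_2$ and $u=(1,0)\in\bdry\dom\phi$. But \eqref{eq:subg_char} would force $(1,0)=\nabla\phi^*(-\widetilde\nabla g(x^{k+1}))\in\ran\nabla\phi^*=(-1,1)^2$, which is impossible.

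So what is missing is an extra hypothesis rather than a cleverer argument. Either of the following closes Step 1.
\begin{itemize}
\item[(i)] $\dom\phi$ is open, i.e., $\phi\equiv+\infty$ on $\bdry\dom\phi$, as for the log-barrier references of \cref{example:norm_grad}. The prox objective at $x^{k+1}$ is at most its value $g(x^k)+\gamma_k\phi(v)$ at $x^k$, so it is finite. This already gives $u\in\dom\phi=\intr\dom\phi$, and your strict-differentiability/Fermat argument with \cite[Proposition 1.107]{Mordukhovich2006} is then complete.
\item[(ii)] $g$ is convex. The Moreau--Rockafellar sum rule then applies, because $(\gamma_k\star\phi)(\cdot-y^k)$ is continuous at $x^k\in\dom g$. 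The resulting $\partial\phi(u)\neq\emptyset$ forces $u\in\intr\dom\phi$ by the same theorem of Rockafellar.
\end{itemize}

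The other two conclusions survive without any of this. Since $u,v\in\dom\phi$, you get $\|x^{k+1}-x^k\|=\gamma_k\|u-v\|\le 2\gamma_k D$ directly, with no need to route Step 3 through \eqref{eq:subg_char}. Your Step 4 uses strong convexity only at the interior point $v$ with $\nabla\phi(v)=d^k$, which is valid for every $u$. Only the subgradient identity \eqref{eq:subg_char} needs the additional assumption; \cref{thm:comp_desc1} and the gap measure built from $\widetilde\nabla g(x^{k+1})$ rest on it.
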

\begin{proof}
    To begin with, note that since $y^k \in \dom g + \gamma \intr(\dom \phi) \subseteq \dom g^\phi_\gamma$ the minimization problem is well-defined in light of \cref{thm:prox_bound}.
    From the optimality conditions of the minimization problem we have that
    \begin{align*}
        -\nabla \phi(\tfrac{1}{\gamma_k}(x^{k+1}-y^k)) \in \partial g(x^{k+1}).
    \end{align*}
    Denoting $\widetilde \nabla g(x^{k+1}) \in \partial g(x^{k+1})$ the subgradient for which the above inclusion holds, we have
    \begin{align*}
        x^{k+1}-x^k = -\gamma_k \nabla \phi^*(d^k) + \gamma_k \nabla \phi^*(-\widetilde \nabla g(x^{k+1})),
    \end{align*}
    while the second equality in \eqref{eq:subg_char} follows by the fact that $\nabla \phi^*$ is odd. Since $\ran \nabla \phi^* \subseteq \intr \dom \phi$, the claimed bound on $\|x^{k+1}-x^k\|$ follows. Further, since $x^{k+1} \in \argmin_{x \in E} g(x) + (\gamma_k \star \phi)(x-y^k)$ we obtain:
    \begin{equation} \label{eq:min_func}
        g(x^{k+1}) +( \gamma_k \star \phi)(x^{k+1}-y^k) \leq g(x^k) + (\gamma_k \star \phi)(x^k-y^k).
    \end{equation}
    From the strong convexity gradient inequality for $\gamma_k \phi$ between points $\tfrac{1}{\gamma_k}(x^{k+1}-y^k)$ and $\tfrac{1}{\gamma_k}(x^k-y^k)$ we further have:
    \begin{multline*}
        (\gamma_k \star \phi)(x^{k+1}-y^k) \geq (\gamma_k \star \phi)(x^k-y^k) + \gamma_k \langle \nabla \phi(\tfrac{1}{\gamma_k}(x^k-y^k)),\tfrac{1}{\gamma_k}(x^{k+1}-x^k) \rangle \\ + \tfrac{\gamma_k \mu}{2}\|\tfrac{1}{\gamma_k}(x^{k+1}-x^k)\|^2.
    \end{multline*}
    Moreover, $\tfrac{1}{\gamma_k}(x^k-y^k) = \nabla \phi^*(d^k)$ and thus
    \begin{equation*}
        (\gamma_k \star \phi)(x^{k+1}-y^k) \geq (\gamma_k \star \phi)(x^k-y^k) + \langle d^k,x^{k+1}-x^k \rangle + \tfrac{\mu}{2\gamma_k}\|x^{k+1}-x^k\|^2.
    \end{equation*}
    Using this inequality in \eqref{eq:min_func} and rearranging, we obtain the claimed result.
\end{proof} 
Note that the result above holds regardless of the choice of the direction $d^k$ and thus applies to the stochastic setting as well.
\subsection{Failure of the proximal step under nonsmoothness of \texorpdfstring{$\phi^*$}{phi*}}
We remark that in the unconstrained deterministic case, if we additionally assume that the objective is Lipschitz smooth and $\sup_{x \in \dom \phi}\|x\| = D < +\infty$,  the convergence of the method can be proven using techniques similar to those in the literature e.g.\ \cite{pethick2025trainingdeeplearningmodels}. Indeed, note that from the Euclidean descent lemma and for $\gamma_k=\gamma$ constant we have
\begin{align*}
    f(x^{k+1}) 
    &\leq f(x^k) + \langle \nabla f(x^k),x^{k+1}-x^k \rangle + \tfrac{L}{2}\|x^{k+1}-x^k\|^2
    \\
    &= f(x^k) -\gamma \langle \nabla f(x^k),\nabla \phi^*(\nabla f(x^k)) \rangle + \tfrac{\gamma^2 L }{2}\|\nabla \phi^*(\nabla f(x^k))\|^2
    \\
    & \leq f(x^k) - \gamma [\phi^*(\nabla f(x^k)) + \phi(\nabla \phi^*(\nabla f(x^k)))] + \tfrac{\gamma^2LD^2}{2},
\end{align*}
where the inequality follows by the bound on the diameter of $\dom \phi$ and the equality case of the Fenchel--Young inequality. Using thus a suitable $\gamma$ we can obtain a convergence rate for the corresponding stationarity measure $\phi^*(\nabla f(x^k)) + \phi(\nabla \phi^*(\nabla f(x^k)))$. In this case we could also use reference functions $\phi$ that do not satisfy our smoothness assumptions, such as the indicator of some convex, compact set with the most standard choice being the ball of some norm. Nevertheless, this is not the case under the presence of the additional nonsmooth term $g$. Indeed, suppose that $g$ is convex. Then we have the following inequality.
\[
    f(x^{k+1}) 
    \leq f(x^k) + \langle \nabla f(x^k) + \widetilde \nabla g(x^{k+1}),x^{k+1}-x^k \rangle - \langle \widetilde \nabla g(x^{k+1}),x^{k+1}-x^k \rangle  + \tfrac{L}{2}\|x^{k+1}-x^k\|^2
\]
where $\widetilde \nabla g(x^{k+1}) \in \partial g(x^{k+1})$, and therefore we obtain:
\begin{align*}
    F(x^{k+1}) &\leq F(x^k) + \langle \nabla f(x^k) + \widetilde\nabla g(x^{k+1}), x^{k+1} - x^k\rangle + \tfrac{L}{2}\|x^{k+1} - x^k\|^2 \\
    &\leq F(x^k) - \gamma \langle \nabla f(x^k)+\widetilde \nabla g(x^{k+1}),\nabla \phi^*(\nabla f(x^k))+ \nabla \phi^*(\widetilde \nabla g(x^{k+1}))\rangle + 2L\gamma^2  D^2.
\end{align*}
where the first inequality utilizes the convex subgradient inequality for $g$ and the second inequality follows from \Cref{thm:prox_thm}.
It is clear that unless more properties are assumed on $\phi$, the inner product in the inequality above can be $0$ even if $-\nabla f(x^k) \notin \partial g(x^{k+1})$.

\section{Spectral reference functions and anisotropic projections}

\subsection{Proof of \texorpdfstring{\Cref{lem:sc_mat}}{Lemma 3.1}}
\begin{proof}
    First, we show that $\Phi$ is $\mu$-strongly convex and lsc. Since $\phi$ is assumed to be $\mu$-strongly convex, it follows from \cite[Theorem 5.17]{beck2017first} that $\phi - \tfrac{\mu}{2}\|\cdot\|^2_2$ is convex. Then, since $\Phi - \tfrac{\mu}{2} \|\cdot\|^2_F = \phi \circ \sigma - \tfrac{\mu}{2}\|\cdot\|_2^2 \circ \sigma= (\phi - \tfrac{\mu}{2}\|\cdot\|_2^2) \circ \sigma$, \cite[Corollary 2.6]{lewis1995convex} ensures that $\Phi - \tfrac{\mu}{2} \|\cdot\|^2_F$ is convex, and using the reverse direction of \cite[Theorem 5.17]{beck2017first} yields the desired result. That $\Phi$ is lsc also follows from \cite[Corollary 2.6]{lewis1995convex}.

    Second, since $\Phi = \phi \circ \sigma$ is real orthogonal invariant from \cite[Proposition 2.2]{lewis1995convex}, we have that $\Phi(-X) = \Phi((-I_m) X I_n) = \Phi(X)$ for all $X\in\bR^{m\times n}$, thus $\Phi$ is even. Furthermore, $\Phi(0) = (\phi \circ \sigma)(0) = \phi(0) = 0$ by the assumption on $\phi$. Denote $C = \intr\dom\Phi$. That $C\neq\emptyset$, $\Phi\in\cC^1(C)$ and that $\|\nabla \Phi(X^\nu)\| \to \infty$ whenever $C \ni X^\nu \to X \in \bdry \dom \Phi$ follows from the hypothesis on $\phi$, \cite[Corollary 3.2]{lewis1995convex} and \cite[Theorem 26.1]{rockafellar1970convex}.

    Lastly, the formula for the conjugate follows from \cite[Theorem 2.4]{lewis1995convex}, that $\Phi^* \in \cC^1(\bR^{m\times n})$ is clear from the duality of strong convexity and Lipschitz smoothness, and the form of the gradient is the content of \cite[Theorem 3.1]{lewis1995convex}. 
\end{proof}

\subsection{Proof of \texorpdfstring{\Cref{thm:aniso_prox_spectral}}{Theorem 3.5}}
\begin{proof}
Let $G:\bR^{m\times n}\to\exR$ and $\Phi:\bR^{m\times n}\to\exR$ be real orthogonal invariant functions, and assume moreover that $\Phi$ is convex and lsc. Suppose $U\Diag(\sigma(Y))V^\top= Y \in \bR^{m\times n}$ is any full SVD of $Y$ with $U\in O(m), V\in O(n)$. We aim to find $X\in\bR^{m\times n}$ that minimizes $G(X) +( \gamma \star \Phi)(X - Y)$. First, by real orthogonal invariance, it follows that for any $X\in\bR^{m\times n}$
    \begin{align*}
        G(X) + (\gamma \star \Phi)(X - Y) &=  G(U^\top XV) + (\gamma \star \Phi)(U^\top (X - Y)V) \\
        &= G(U^\top XV) + (\gamma \star \Phi)(U^\top XV - \Diag(\sigma(Y)).
    \end{align*}
    So, by a change of variables $\tilde{X} = U^\top XV$,
    \[
        \argmin_{X\in\bR^{m\times n}} G(X) + (\gamma \star \Phi)(X - Y) = U\argmin_{\tilde{X} \in \bR^{m\times n}} \left\{ G(\tilde{X}) + (\gamma \star \Phi)(\tilde{X} - \Diag(\sigma(Y)))\right\} V^\top.
    \]
    Now, define $g:\bR^q \to \exR$ and $\phi:\bR^q\to\exR$ through $g(x) = G(\Diag x)$ and $\phi(x) = \Phi(\Diag x)$ respectively. Both $g$ and $\phi$ are absolutely symmetric and satisfy $G = g\circ \sigma$ and $\Phi=\phi\circ\sigma$ in light of \Cref{fac:abs_symm}. Moreover, $\phi$ is convex and lsc through \cite[Corollary 2.6]{lewis1995convex}.
    
    Setting $f$ in \cref{lem:mirsky_general} to $\gamma \star \phi$ yields for any $\tilde{X}\in\bR^{m\times n}$ that
    \[
        (g\circ \sigma)(\tilde{X}) + (\gamma \star \phi)(\sigma(\tilde{X}) - \sigma(Y)) \leq (g\circ \sigma)(\tilde{X}) + (\gamma \star \Phi) (\tilde{X} - \diag(\sigma(Y))).
    \]
    Since $\sigma(\tilde{X}) \in \bR^q$, we can further lower bound by
    \[
        (g\circ \sigma)(\tilde{X}) + (\gamma \star \Phi) (\tilde{X} - \diag(\sigma(Y))) 
        \geq \inf_{x\in\bR^q} g(x) + (\gamma \star \phi)(x - \sigma(Y)).
    \]
    Therefore,
    \[
        \inf_{\tilde{X}\in\bR^{m\times n}}  G(\tilde{X}) + (\gamma \star \Phi)(\tilde{X} - \Diag(\sigma(Y))) \geq \inf_{x\in\bR^q} g(x) +( \gamma \star \phi)(x - \sigma(Y)).
    \]
    We now show that if
    \[
        x^\star \in  \argmin_{x\in\bR^q} \{G(\Diag(x)) + (\gamma \star \Phi)(\Diag(x-\sigma(Y))) \} =  \argmin_{x\in\bR^q} \{g(x) + (\gamma \star \phi)(x-\sigma(Y)) \} \neq \emptyset,
    \]
    then $\Diag(x^\star)$ attains this lower bound. Indeed, this follows easily from the fact that $g$ and $\phi$ are absolutely symmetric:
    \begin{align*}
        G(\Diag(x^\star)) &= g(|x^\star|^\downarrow) = g(x^\star), \\ (\gamma \star \Phi)(\Diag(x^\star) - \Diag(\sigma(Y))) &= (\gamma \star \phi )(|x^\star - \sigma(Y)|^\downarrow) = (\gamma \star \phi)(x^\star - \sigma(Y)).
    \end{align*}
    We conclude that
    \[
        U\Diag(x^\star)V^\top \in \argmin_{X\in\bR^{m\times n}} G(X) +( \gamma \star \Phi)(X-Y).
    \]
\end{proof}

\section{Analysis in the stochastic setting}
Throughout this section we assume that $f$ is Lipschitz smooth with constant $L > 0$. We remind that $\sup_{x \in \dom \phi}\|x\| = D < \infty$ and thus $\|\nabla \phi^*(y)\| \leq D$ for all $y \in E$. This moreover implies that $\phi^*$ is a $D$-Lipschitz continuous function on $E$. By a simple application of the Lipschitz continuity inequality, this implies
\begin{equation} \label{eq:rev_ineq}
    \phi^*(y) \geq \phi^*(\bar y) - D\|y-\bar y\| \qquad \forall y, \bar y \in E.
\end{equation}

\subsection{Warm-up: The smooth case}
When using the linear minimization oracle (and thus in our notation $\phi$ is the indicator of some norm ball) the analysis follows naturally from the definition of the dual norm, see e.g. \cite[Lemma D.1]{pethick2025trainingdeeplearningmodels}. This is not the case in our setting where $\phi$ is assumed to satisfy \cref{assum:sc}. However, we can use the definition of the convex conjugate, that in a way generalizes the definition of the dual norm, to obtain similar convergence guarantees. 

For the remainder of this subsection we assume that $\phi^* + \phi \circ \nabla \phi^*$ is $D$-Lipschitz continuous with constant $D$ on $E$, which is in fact the case for all the interesting reference functions we work with. Moreover, $g$ is assumed to be identically zero and for $\gamma_k=\gamma$ constant, the algorithm takes the form
\begin{equation} \label{eq:alg_unc}
    x^{k+1} = x^k - \gamma \nabla \phi^*(d^k).
\end{equation}
From the Euclidean descent lemma for $f$ between points $x^k$ and $x^{k+1}$, we have:
\begin{align*}
    f(x^{k+1}) 
        &\leq f(x^k) -\gamma \langle \nabla f(x^k),\nabla \phi^*(d^k) \rangle + \tfrac{L\gamma^2 D^2}{2}
        \\
        &= f(x^k) + \gamma \langle d^k-\nabla f(x^k),\nabla \phi^*(d^k) \rangle - \gamma \langle d^k,\nabla \phi^*(d^k) \rangle +  \tfrac{L\gamma^2 D^2}{2}
        \\
        & \leq f(x^k) + \gamma D\|\lambda^k\| - \gamma [\phi^*(d^k)+\phi(\nabla \phi^*(d^k))] + \tfrac{L\gamma^2 D^2}{2}
        \\
        & \leq f(x^k) + 2\gamma D\|\lambda^k\| - \gamma [\phi^*(\nabla f(x^k))+\phi(\nabla \phi^*(\nabla f(x^k)))] + \tfrac{L\gamma^2 D^2}{2},
\end{align*}
where in the first inequality we used the update \eqref{eq:alg_unc} along with the bound $\|\nabla \phi^*(d^k)\| \leq D$, and in the second inequality the Cauchy--Schwarz inequality, the equality case of the Fenchel--Young inequality and $\lambda^k \coloneq d^k - \nabla f(x^k)$. The final inequality follows by \eqref{eq:rev_ineq} for the function $\phi^* + \phi \circ \nabla \phi^*$ between points $y = d^k$ and $\bar y = \nabla f(x^k)$. The analysis can now follow any of the ones in the related literature that bound the term $\expec[\|\lambda^k\|]$ but now for the stationarity measure $\phi^*(\nabla f(x^k))+\phi(\nabla \phi^*(\nabla f(x^k)))$. 

\textbf{Norm-constrained lmos}. It is clear that \cref{assum:sc} and in general the smoothness of $\phi^*$ is not required, and we could have $\phi = \delta_B$ with $B$ the unit ball of some norm $\|\cdot\|$ similarly to \cite{pethick2025trainingdeeplearningmodels}. In this case the stationarity gap would just collapse to $\|\nabla f(x^k)\|_*$ where $\|\cdot \|_*$ is now the dual norm, thus leading to standard convergence guarantees.

\subsection{The composite case} \label{app:comp_setting}
We now return to the composite setting to analyze our proposed scheme. We begin with a simple lemma that leverages the Euclidean descent inequality along with convex conjugacy.
\begin{lemma} \label{thm:comp_desc1}
    Let $\{x^k\}_{k \in \bN}$ be the sequence of iterates generated by \eqref{eq:update}. Then, the following inequality holds
    \begin{multline}
        F(x^{k+1}) \leq F(x^k) + 2\gamma_k D\|\lambda^k\| + 2\gamma_k^2 D^2L + \gamma_k\phi^*(\widetilde \nabla g(x^{k+1})) - \gamma_k\phi^*(d^k)
        \\
        -\gamma_k \langle \nabla \phi^*(\widetilde \nabla g(x^{k+1})),\widetilde \nabla g(x^{k+1}) \rangle - \gamma_k \langle d^k,\nabla \phi^*(\widetilde \nabla g(x^{k+1})) \rangle \label{eq:main_ineq_bregman},
    \end{multline}
    where $\widetilde \nabla g(x^{k+1}) \in \partial g(x^{k+1})$.
\end{lemma}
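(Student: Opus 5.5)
The plan is to combine the Euclidean descent lemma for $f$ with the \emph{function-value} optimality of the proximal step. We should not use the subgradient inequality for $g$, which is unavailable because $g$ may be nonconvex. Write $\Delta^k := x^{k+1}-x^k$ and $\lambda^k := d^k-\nabla f(x^k)$. By \cref{thm:prox_thm} there is $\widetilde\nabla g(x^{k+1})\in\partial g(x^{k+1})$ with
\[
\Delta^k = -\gamma_k\nabla\phi^*(d^k) - \gamma_k\nabla\phi^*(\widetilde\nabla g(x^{k+1})), \qquad \|\Delta^k\|\le 2\gamma_k D .
\]
This is the subgradient used throughout.

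\textbf{Step 1 (smooth part).} By the descent lemma, $f(x^{k+1}) \le f(x^k) + \langle \nabla f(x^k),\Delta^k\rangle + \tfrac{L}{2}\|\Delta^k\|^2$. The quadratic term is at most $2\gamma_k^2D^2L$. Splitting $\nabla f(x^k) = d^k - \lambda^k$ and applying Cauchy--Schwarz with $\|\Delta^k\|\le 2\gamma_kD$ bounds $-\langle\lambda^k,\Delta^k\rangle$ by $2\gamma_kD\|\lambda^k\|$. Substituting the formula for $\Delta^k$ into the remaining term gives
\[
\langle d^k,\Delta^k\rangle = -\gamma_k\langle d^k,\nabla\phi^*(d^k)\rangle - \gamma_k\langle d^k,\nabla\phi^*(\widetilde\nabla g(x^{k+1}))\rangle .
\]

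\textbf{Step 2 (nonsmooth part).} Compare the minimizer $x^{k+1}$ with the feasible candidate $x^k$ in the proximal subproblem, as in \eqref{eq:min_func}:
\[
g(x^{k+1}) - g(x^k) \le \gamma_k\phi\big(\tfrac{1}{\gamma_k}(x^k-y^k)\big) - \gamma_k\phi\big(\tfrac1{\gamma_k}(x^{k+1}-y^k)\big).
\]
Here $\tfrac{1}{\gamma_k}(x^k-y^k)=\nabla\phi^*(d^k)$. From the optimality condition in the proof of \cref{thm:prox_thm}, $\tfrac1{\gamma_k}(x^{k+1}-y^k)=\nabla\phi^*(-\widetilde\nabla g(x^{k+1}))$. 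Next, apply the Fenchel--Young equality $\phi(\nabla\phi^*(u)) = \langle u,\nabla\phi^*(u)\rangle - \phi^*(u)$. Together with evenness of $\phi^*$ and oddness of $\nabla\phi^*$ (\cref{thm:conv_conj}), this turns the right-hand side into
\[
\gamma_k\big[\langle d^k,\nabla\phi^*(d^k)\rangle - \phi^*(d^k) - \langle \widetilde\nabla g(x^{k+1}),\nabla\phi^*(\widetilde\nabla g(x^{k+1}))\rangle + \phi^*(\widetilde\nabla g(x^{k+1}))\big].
\]

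\textbf{Step 3 (combine).} Add the bounds from Steps 1 and 2. The terms $\pm\gamma_k\langle d^k,\nabla\phi^*(d^k)\rangle$ cancel, and what remains is exactly \eqref{eq:main_ineq_bregman}. One technical point needs care: applying Fenchel--Young requires the arguments of $\phi$ to lie in $\intr\dom\phi$, where $\phi$ is differentiable. This holds because $\ran\nabla\phi^*\subseteq\intr\dom\phi$, and the subproblem is well posed by \cref{thm:prox_bound}.

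I expect the main obstacle to be recognizing that Step 2 must use the value comparison rather than the inequality $\langle d^k,\Delta^k\rangle \le -\tfrac{\mu}{2\gamma_k}\|\Delta^k\|^2 + g(x^k)-g(x^{k+1})$ from \cref{thm:prox_thm}. That inequality discards the $\phi^*$ structure, and that structure is what later produces the Bregman gap: the last four terms of \eqref{eq:main_ineq_bregman} equal $-\gamma_k D_{\phi^*}(d^k,-\widetilde\nabla g(x^{k+1}))$. Beyond that choice, only sign bookkeeping for the odd map $\nabla\phi^*$ remains.
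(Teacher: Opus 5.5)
Your proposal is correct and follows essentially the same route as the paper. Both arguments combine the descent lemma for $f$ (with the split $\nabla f(x^k)=d^k-\lambda^k$ and $\|x^{k+1}-x^k\|\le 2\gamma_k D$), the value comparison of $x^{k+1}$ against $x^k$ in the proximal subproblem, the Fenchel--Young equality together with evenness/oddness, and a final summation in which the $\pm\gamma_k\langle d^k,\nabla\phi^*(d^k)\rangle$ terms cancel. Your caveat about $\intr\dom\phi$ is harmless but unnecessary, since the Fenchel--Young equality holds whenever $\nabla\phi^*(u)\in\partial\phi^*(u)$.
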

\begin{proof}
    From the minimization step in \eqref{eq:update} we have that
\begin{equation}
    g(x^{k+1}) + \gamma_k \phi(\tfrac{1}{\gamma_k}(x^{k+1}-y^k)) \leq g(x^k) + \gamma_k \phi(\tfrac{1}{\gamma_k}(x^k-y^k)).
\end{equation}
Using \cref{thm:prox_thm} along with the fact that $\phi$ is even and $\nabla \phi^*$ is odd yields
\begin{equation*}
    g(x^{k+1}) + \gamma_k \phi(\nabla \phi^*(\widetilde \nabla g(x^{k+1}))) \leq g(x^k) + \gamma_k \phi(\nabla \phi^*(d^k)),
\end{equation*}
for $\widetilde \nabla g(x^{k+1}) \in \partial g(x^{k+1})$. Applying now the equality case of the Fenchel--Young inequality for the terms $\gamma_k \phi(\nabla \phi^*(\widetilde \nabla g(x^{k+1})))$ and $\gamma_k \phi(\nabla \phi^*(d^k))$ we obtain
\begin{multline} \label{eq:g_bound}
    g(x^{k+1}) + \gamma_k \langle \nabla \phi^*(\widetilde \nabla g(x^{k+1})),\widetilde \nabla g(x^{k+1}) \rangle - \gamma_k\phi^*(\widetilde \nabla g(x^{k+1})) \\ \leq g(x^k) + \gamma_k \langle \nabla \phi^*(d^k),d^k \rangle - \gamma_k\phi^*(d^k).
\end{multline}
On the other hand, using the Euclidean descent lemma for $f$ between points $x^k$ and $x^{k+1}$
\begin{align} \nonumber
    f(x^{k+1}) 
    &\leq f(x^k) - \gamma_k \langle \nabla f(x^k),\nabla \phi^*(d^k) +\nabla \phi^*(\widetilde \nabla g(x^{k+1})) \rangle + 2\gamma_k^2 D^2 L
    \\
    &\leq f(x^k) + 2\gamma_k D\|\lambda^k\| - \gamma_k \langle d^k,\nabla \phi^*(d^k)+\nabla \phi^*(\widetilde \nabla g(x^{k+1})) \rangle  + 2\gamma_k^2 D^2 L, \label{eq:f_bound}
\end{align}
where in the first inequality we used \cref{thm:prox_thm} and in the second inequality we added and subtracted $d^k$ in the inner product term, applied Cauchy--Schwarz and moreover used the bound $\|\nabla \phi^*(y)\| \leq D$ for all $y \in E$.

Summing \eqref{eq:g_bound} and \eqref{eq:f_bound} we get
\begin{multline*} 
    F(x^{k+1}) \leq F(x^k) + 2\gamma_k D\|\lambda^k\| + 2\gamma_k^2 D^2L + \gamma_k\phi^*(\widetilde \nabla g(x^{k+1})) - \gamma_k\phi^*(d^k) \\
    -\gamma_k \langle \nabla \phi^*(\widetilde \nabla g(x^{k+1})),\widetilde \nabla g(x^{k+1}) \rangle - \gamma_k \langle d^k,\nabla \phi^*(\widetilde \nabla g(x^{k+1})) \rangle,
\end{multline*}
which is the claimed result.
\end{proof}

\begin{lemma} \label{thm:comp_desc2}
    Let $\{x^k\}_{k \in \bN}$ be the sequence of iterates generated by \eqref{eq:update}. Then, the following inequality holds
    \begin{equation} \label{eq:c2_bound}
        F(x^{k+1}) \leq F(x^k) + 4\gamma_k D\|\lambda^k\| + 6\gamma_k^2 D^2L - \gamma_k D_{\phi^*}(\nabla f(x^{k+1}), -\widetilde \nabla g(x^{k+1})).
    \end{equation}
    where $\widetilde \nabla g(x^{k+1}) \in \partial g(x^{k+1})$.
\end{lemma}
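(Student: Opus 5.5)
The plan is to start from \cref{thm:comp_desc1} and notice that its last four terms are exactly a negative Bregman divergence of $\phi^*$, evaluated at the direction $d^k$ instead of $\nabla f(x^{k+1})$. Then I would pay a Lipschitz price to move the first argument from $d^k$ to $\nabla f(x^{k+1})$.

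\emph{Step 1: the Bregman identity.} Write $\widetilde\nabla g := \widetilde \nabla g(x^{k+1})$ and $a := \nabla\phi^*(\widetilde\nabla g)$. By \cref{thm:conv_conj}, $\phi^*$ is even and $\nabla\phi^*$ is odd, so $\phi^*(-\widetilde\nabla g) = \phi^*(\widetilde\nabla g)$ and $\nabla\phi^*(-\widetilde\nabla g) = -a$. Hence, for every $u \in E$,
\[
D_{\phi^*}(u,-\widetilde\nabla g) = \phi^*(u) - \phi^*(\widetilde\nabla g) + \langle a, u + \widetilde\nabla g\rangle .
\]
Taking $u = d^k$, the expression $\gamma_k\phi^*(\widetilde\nabla g) - \gamma_k\phi^*(d^k) - \gamma_k\langle a,\widetilde\nabla g\rangle - \gamma_k\langle d^k, a\rangle$ in \eqref{eq:main_ineq_bregman} equals $-\gamma_k D_{\phi^*}(d^k,-\widetilde\nabla g)$. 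Therefore
\[
F(x^{k+1}) \leq F(x^k) + 2\gamma_k D\|\lambda^k\| + 2\gamma_k^2D^2L - \gamma_k D_{\phi^*}(d^k,-\widetilde\nabla g).
\]

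\emph{Step 2: change of the first argument.} As a function of $u$, $D_{\phi^*}(u,-\widetilde\nabla g)$ equals $\phi^*(u) + \langle a,u\rangle$ plus a constant. Here $\phi^*$ is $D$-Lipschitz, as noted at the start of this appendix, and $\|a\|\leq D$. So this map is $2D$-Lipschitz, which gives
\[
D_{\phi^*}(d^k,-\widetilde\nabla g) \geq D_{\phi^*}(\nabla f(x^{k+1}),-\widetilde\nabla g) - 2D\|d^k - \nabla f(x^{k+1})\|.
\]

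\emph{Step 3: bounding the discrepancy.} By the triangle inequality, $L$-smoothness, and $\|x^{k+1}-x^k\|\leq 2\gamma_k D$ from \cref{thm:prox_thm},
\[
\|d^k - \nabla f(x^{k+1})\| \leq \|\lambda^k\| + L\|x^{k+1}-x^k\| \leq \|\lambda^k\| + 2L\gamma_k D .
\]
Multiplying by $2\gamma_k D$ adds $2\gamma_k D\|\lambda^k\| + 4\gamma_k^2D^2L$ to the right-hand side of the Step 1 bound. The totals become $4\gamma_k D\|\lambda^k\| + 6\gamma_k^2D^2L$, which is exactly \eqref{eq:c2_bound}.

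The main obstacle is recognizing the Bregman structure in Step 1. This requires care with the signs coming from the evenness of $\phi^*$ and the oddness of $\nabla\phi^*$. Once that identity is in place, the rest is routine Lipschitz bookkeeping.
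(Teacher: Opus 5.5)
Your proof is correct and is essentially the paper's argument, with identical constants. The paper moves the two $d^k$-dependent terms, $-\phi^*(d^k)$ and $-\langle d^k,\nabla\phi^*(\widetilde\nabla g(x^{k+1}))\rangle$, to $\nabla f(x^{k+1})$ separately, each costing $D\|\lambda^k\|+2\gamma_k D^2L$, and only then recognizes $-D_{\phi^*}(\nabla f(x^{k+1}),-\widetilde\nabla g(x^{k+1}))$; you instead recognize the Bregman divergence at $d^k$ first and pay both costs at once via the $2D$-Lipschitz continuity of $u\mapsto D_{\phi^*}(u,-\widetilde\nabla g(x^{k+1}))$.
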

\begin{proof}
    We start from \cref{thm:comp_desc1} and aim to bound the inner product terms.
    \begin{align*}
        - \langle d^k,\nabla \phi^*(\widetilde \nabla g(x^{k+1})) \rangle 
        &= \langle \nabla f(x^k)- d^k,\nabla \phi^*(\widetilde \nabla g(x^{k+1})) \rangle - \langle \nabla f(x^k),\nabla \phi^*(\widetilde \nabla g(x^{k+1})) \rangle
        \\
        &\leq  D\|\lambda^k\| + D\|\nabla f(x^{k+1})-\nabla f(x^k)\|  -  \langle \nabla f(x^{k+1}),\nabla \phi^*(\widetilde \nabla g(x^{k+1})) \rangle
        \\
        &\leq  D\|\lambda^k\| + 2\gamma_kD^2L -  \langle \nabla f(x^{k+1}),\nabla \phi^*(\widetilde \nabla g(x^{k+1})) \rangle,
    \end{align*}
    where the equality follows by adding and subtracting $\nabla f(x^k)$, the first inequality by applying the Cauchy--Schwarz inequality, using the fact that $\|\nabla \phi^*(y)\| \leq D$ and adding and subtracting $\nabla f(x^{k+1})$. The final inequality inequality follows by using the Lipschitz continuity of $\nabla f$ in the form $\|\nabla f(x^{k+1})-\nabla f(x^k)\| \leq L \|x^{k+1}-x^k\|$ and then the bound from \cref{thm:prox_thm}. Therefore,
    \[
        - \gamma_k \langle d^k,\nabla \phi^*(\widetilde \nabla g(x^{k+1})) \rangle \leq  \gamma_k D\|\lambda^k\| + 2\gamma_k^2D^2L - \gamma_k \langle \nabla f(x^{k+1}),\nabla \phi^*(\widetilde \nabla g(x^{k+1})) \rangle.
    \]
Further, from \cref{eq:rev_ineq} we have that
\begin{align*}
    -\gamma_k \phi^*(d^k) &\leq -\gamma_k \phi^*(\nabla f(x^k)) + \gamma_k D\|\nabla f(x^k)-d^k\|
    \\
    &= -\gamma_k \phi^*(\nabla f(x^{k+1})) + \gamma_k (\phi^*(\nabla f(x^{k+1})) -  \phi^*(\nabla f(x^k))) + \gamma_k D\|\lambda^k\| \\
    &\leq -\gamma_k \phi^*(\nabla f(x^{k+1})) + \gamma_k D\|\nabla f(x^{k+1})-\nabla f(x^{k})\| + \gamma_k D\|\lambda^k\|
    \\
    &\leq -\gamma_k \phi^*(\nabla f(x^{k+1})) + 2\gamma_k^2 D^2L + \gamma_k D \|\lambda^k\|,
\end{align*}
where we have added and subtracted $\phi^*(\nabla f(x^{k+1}))$, used the fact that $\phi^*$ is $D$-Lipschitz continuous, $\|\nabla f(x^{k+1})-\nabla f(x^k)\| \leq L \|x^{k+1}-x^k\|$ and applied the bound from \cref{thm:prox_thm}. Putting everything back in \eqref{eq:main_ineq_bregman} we get
\begin{multline*}
    F(x^{k+1}) \leq F(x^k) + 4\gamma_k D\|\lambda^k\| + 6\gamma_k^2 D^2L + \gamma_k\phi^*(\widetilde \nabla g(x^{k+1})) - \gamma_k\phi^*(\nabla f(x^{k+1}))
    \\
    -\gamma_k \langle \nabla \phi^*(\widetilde \nabla g(x^{k+1})),\widetilde \nabla g(x^{k+1}) +  \nabla f(x^{k+1})\rangle.
\end{multline*}
Now note that 
\begin{equation*}
    \gamma_k\phi^*(\widetilde \nabla g(x^{k+1})) - \gamma_k\phi^*(\nabla f(x^{k+1})) - \gamma_k \langle \nabla \phi^*(\widetilde \nabla g(x^{k+1})),\widetilde \nabla g(x^{k+1}) +  \nabla f(x^{k+1})\rangle 
\end{equation*}
equals $-\gamma_k D_{\phi^*}(\nabla f(x^{k+1}), -\widetilde \nabla g(x^{k+1}))$.

Substituting this result in the inequality above we obtain the claimed result.
\end{proof}

In order to tackle the heavy-tailed noise we follow the approach of \cite{hubler2024gradient} and thus require the following lemma.

\begin{lemma}[{\cite[Lemma 10]{hubler2024gradient}}] \label{thm:p_bound}
    Let $p \in [1,2]$ and $X_1,\dots, X_n \in E$ be a martingale difference sequence, i.e., $\expec[X_j\mid X_{j-1}, \dots, X_1] = 0$ a.s.\ for all $j = 1,\dots,n$ satisfying
    \begin{equation}
        \expec[\|X_j\|^p] < +\infty \qquad \text{for all } j = 1,\dots, n.
    \end{equation}
    Define $S_n = \sum_{j=1}^n X_j$, then
    \begin{equation*}
        \expec[\|S_n\|^p] \leq 2\sum_{j=1}^n \expec [\|X_j\|^p].
    \end{equation*}
\end{lemma}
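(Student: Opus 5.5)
The plan is to prove the bound by induction on $n$, using a one-step inequality for the function $h(x) = \|x\|^p$ in the Euclidean space $E$. Specifically, I would first establish that for all $a,b \in E$ and $p \in [1,2]$,
\begin{equation*}
    \|a+b\|^p \leq \|a\|^p + \langle v(a), b\rangle + 2\|b\|^p,
\end{equation*}
where $v(a) = p\|a\|^{p-2}a$ for $a \neq 0$ and $v(0) = 0$. Given this, set $a = S_{j-1}$ and $b = X_j$, and take the conditional expectation given $X_1,\dots,X_{j-1}$. Since $S_{j-1}$, and hence $v(S_{j-1})$, is measurable with respect to $X_1,\dots,X_{j-1}$, the martingale difference property $\expec[X_j \mid X_{j-1},\dots,X_1]=0$ makes the linear term vanish. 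For $p>1$ this needs $\langle v(S_{j-1}), X_j\rangle$ to be integrable. I would get that from Hölder's inequality, since $\|v(S_{j-1})\| = p\|S_{j-1}\|^{p-1}$ lies in $L^{p/(p-1)}$ whenever $\expec\|S_{j-1}\|^p<\infty$, and the latter holds inductively by Minkowski. Taking total expectations then gives
\begin{equation*}
    \expec\|S_j\|^p \leq \expec\|S_{j-1}\|^p + 2\expec\|X_j\|^p,
\end{equation*}
and summing from $j=1$ to $n$ with $S_0 = 0$ yields the claim.

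The case $p=1$ of the one-step inequality is elementary. By the triangle inequality, $\|a+b\| - \|a\| \leq \|b\|$, and by Cauchy--Schwarz, $-\langle a/\|a\|, b\rangle \leq \|b\|$, so the remainder is at most $2\|b\|$. The case $a=0$ is trivial for every $p$. For $p \in (1,2]$ I would write the remainder in integral form: $h$ is $\cC^1$ with $\nabla h = v$, so
\begin{equation*}
    \|a+b\|^p - \|a\|^p - \langle v(a),b\rangle = \int_0^1 \langle v(a+tb) - v(a), b\rangle \, dt .
\end{equation*}
It then suffices to show that $v$ is $(p-1)$-Hölder continuous, $\|v(x)-v(y)\| \leq C\|x-y\|^{p-1}$, with $C \leq 2p$. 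Then the integral is at most $C\|b\|^p\int_0^1 t^{p-1}dt = (C/p)\|b\|^p \leq 2\|b\|^p$.

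The main obstacle is this Hölder estimate with an explicit constant. I would first reduce to the two-dimensional subspace spanned by $x$ and $y$, so that only the norms $r=\|x\|$, $s=\|y\|$ and the angle between $x$ and $y$ matter. By symmetry and homogeneity of degree $p-1$, I would normalize so that $\|x-y\|=1$ or $\|x\| = 1$, and then split into two cases. If $\|x-y\| \geq \tfrac12\max(\|x\|,\|y\|)$, the crude bound $\|v(x)\|+\|v(y)\| \leq 2p\max(\|x\|,\|y\|)^{p-1}$ together with $\max(\|x\|,\|y\|) \leq 2\|x-y\|$ gives a constant of order $p\,2^{p}$. This is not $\leq 2p$, so this case must be sharpened by splitting $v(x)-v(y)$ into its radial part $p(\|x\|^{p-1}-\|y\|^{p-1})$, handled by the scalar subadditivity $|r^{p-1}-s^{p-1}| \leq |r-s|^{p-1}$, and its angular part $p\,s^{p-1}\|x/\|x\| - y/\|y\|\|$, handled using $\|x/\|x\|-y/\|y\|\| \leq \min(2, 2\|x-y\|/\max(r,s))$. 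Combining the two parts should yield the known constant $C = p\,2^{2-p} \leq 2p$. If the bookkeeping does not close with constant $2$ this way, I would instead fall back on the direct scalar proof in the cited reference: reduce to $a = e_1$ and optimize the remainder over $b$ in the plane.
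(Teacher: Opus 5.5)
\emph{Verdict: there is a gap.} The paper gives no proof of this lemma; it imports it from \cite{hubler2024gradient} (for $\bR^n$) and \cite{pinelis2022multidimensional} (for Hilbert spaces). Your overall scheme is the standard one, and most of it is sound: the one-step inequality, the H\"older/Minkowski integrability of $\langle v(S_{j-1}),X_j\rangle$, the vanishing of the linear term under conditioning, the telescoping, the case $p=1$, and the integral form of the remainder.

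\textbf{The gap.} It sits exactly at the step you call the main obstacle: the radial/angular split does not give $C\le 2p$. Take $s$ to be the smaller norm. Your radial estimate contributes up to $p\|x-y\|^{p-1}$, and your angular estimate $p\,s^{p-1}\min\{2,2\|x-y\|/\max(\|x\|,\|y\|)\}$ contributes up to $2p\|x-y\|^{p-1}$. Adding them by the triangle inequality gives only $C\le 3p$, i.e.\ the lemma with constant $3$. The loss is real. For $\|x\|=2$, $\|y\|=1$, $\|x-y\|=2$ your two bounds sum to $3p$. The needed bound is $2p\|x-y\|^{p-1}=p\,2^{p}$, which is smaller than $3p$ whenever $p<\log_2 3$. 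The fallback you mention is not carried out, so as written the proposal does not reach the constant $2$.

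\textbf{The fix.} Get the sharp H\"older constant without splitting. Put $\alpha=p-1$, $u(x)=\|x\|^{p-2}x$ (so $v=pu$), $r=\|x\|$, $s=\|y\|$, and let $c\in[-1,1]$ be the cosine of the angle between $x$ and $y$. The cases $x=0$ or $y=0$ are immediate. Then
\[
\|u(x)-u(y)\|^2-4^{1-\alpha}\|x-y\|^{2\alpha}=r^{2\alpha}+s^{2\alpha}-2r^\alpha s^\alpha c-4^{1-\alpha}\bigl(r^2+s^2-2rsc\bigr)^\alpha .
\]
The base $r^2+s^2-2rsc\ge (r-s)^2\ge 0$, so the right-hand side is an affine function of $c$ minus a concave one. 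It is therefore convex in $c$ and maximized over $[-1,1]$ at $c=\pm1$. At these endpoints the claim reduces to two scalar facts:
\begin{itemize}
\item $|r^\alpha-s^\alpha|\le |r-s|^\alpha\le 2^{1-\alpha}|r-s|^\alpha$, by subadditivity;
\item $r^\alpha+s^\alpha\le 2^{1-\alpha}(r+s)^\alpha$, by concavity of $t\mapsto t^\alpha$.
\end{itemize}
Hence $\|v(x)-v(y)\|\le p\,2^{2-p}\|x-y\|^{p-1}$, with equality at $y=-x$. Your integral representation then bounds the remainder by $2^{2-p}\|b\|^p\le 2\|b\|^p$. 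The rest of your argument goes through unchanged and in fact yields the slightly sharper constant $2^{2-p}$.
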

We remark that the proof of \cref{thm:p_bound} was given in \cite{hubler2024gradient} for $E = \bR^n$. Nevertheless, it has been proven also for separable Hilbert spaces, see for example \cite{pinelis2022multidimensional}. Note that for the remainder of this subsection we have $\gamma_k = \gamma$ and $\alpha_k = \alpha$, i.e., constant stepsize and momentum parameters.
\begin{lemma} \label{thm:stoch_bound}
    Let \cref{assum:stoch,assum:f_lipschitz} hold true and let $d^k = \alpha \nabla f(x^k, \xi^k) + (1-\alpha)d^{k-1}$ with $d^0=\nabla f(x^0,\xi^0)$. Denote $\lambda^k = d^k - \nabla f(x^k)$. Then, for $0 < \alpha \leq 1$, we have the following estimate
    \begin{equation}
        \expec[\|\lambda^k\|] \leq (1-\alpha)^k \sigma + \frac{2\gamma L D}{\alpha} + 2\alpha \sigma \left (\frac{1}{1-(1-\alpha)^p}\right)^{1/p}.
    \end{equation}
    Therefore, choosing $\alpha = (K+1)^{-1/2}$ and $\gamma = \bar \gamma (K+1)^{-3/4}$ for some $\bar \gamma > 0$ we have for $K \geq 3$
    \begin{equation}
        \frac{D}{K+1}\sum_{k=0}^K \expec[\|\lambda^k\|] \leq \frac{D\sigma}{(K+1)^{1/2}} + \frac{2\bar\gamma LD^2}{(K+1)^{1/4}} +  \frac{2^{(p+1)/p}\sigma D (K+1)^{(2-p)/4p}}{p^{1/p}(K+1)^{1/4}}
    \end{equation}
\end{lemma}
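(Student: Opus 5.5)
Unrolling the momentum recursion and splitting the error into three pieces should give the bound: an initial noise term, a drift term from the moving iterates, and an accumulated noise term.

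Write $\epsilon^k = \nabla f(x^k,\xi^k) - \nabla f(x^k)$ and $S^k = \nabla f(x^{k-1}) - \nabla f(x^k)$ for $k\geq 1$. From $d^k = \alpha \nabla f(x^k,\xi^k) + (1-\alpha)d^{k-1}$ we obtain
\[
\lambda^k = (1-\alpha)\lambda^{k-1} + (1-\alpha)S^k + \alpha \epsilon^k, \qquad \lambda^0 = \epsilon^0 .
\]
Unrolling gives
\[
\lambda^k = (1-\alpha)^k\epsilon^0 + \sum_{j=1}^k (1-\alpha)^{k-j+1} S^j + \alpha\sum_{j=1}^k (1-\alpha)^{k-j}\epsilon^j .
\]
We take norms and expectations and bound each piece.

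\emph{Initial term.} By Jensen and \cref{assum:stoch}, $\expec\|\epsilon^0\| \leq (\expec\|\epsilon^0\|^p)^{1/p} \leq \sigma$, which gives $(1-\alpha)^k\sigma$.

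\emph{Drift term.} By $L$-smoothness and \cref{thm:prox_thm}, $\|S^j\| \leq L\|x^j - x^{j-1}\| \leq 2\gamma L D$ deterministically. Summing the geometric series gives at most $2\gamma L D\sum_{i\geq 1}(1-\alpha)^i \leq 2\gamma L D/\alpha$.

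\emph{Noise term.} The weighted summands $X_j = (1-\alpha)^{k-j}\epsilon^j$ form a martingale difference sequence with respect to the filtration generated by $\xi^0,\dots,\xi^{j-1}$. This holds because $x^j$ is determined by past samples and the oracle is unbiased; $\epsilon^0$ can be absorbed if needed. Using Jensen and then \cref{thm:p_bound},
\[
\expec\Bigl\|\sum_j X_j\Bigr\| \leq \Bigl(2\sum_j (1-\alpha)^{p(k-j)}\sigma^p\Bigr)^{1/p} \leq 2^{1/p}\sigma\bigl(1-(1-\alpha)^p\bigr)^{-1/p}.
\]
Multiplying by $\alpha$ and using $2^{1/p}\leq 2$ yields the third term. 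This is the only delicate step, since measurability must be checked: conditionally on the past, $x^j$ is fixed, so $\expec[\epsilon^j \mid \mathcal F_{j-1}] = 0$.

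\emph{Averaged bound.} Average over $k = 0,\dots,K$ and multiply by $D$.
\begin{itemize}
\item The first term sums to at most $D\sigma/(\alpha(K+1)) = D\sigma(K+1)^{-1/2}$.
\item The second term equals $2\bar\gamma L D^2 (K+1)^{-3/4}(K+1)^{1/2} = 2\bar\gamma LD^2 (K+1)^{-1/4}$.
\item For the third term, Bernoulli's inequality gives $1-(1-\alpha)^p \geq p\alpha(1-\alpha)^{p-1}$, or alternatively one can use concavity. Then $\alpha(1-(1-\alpha)^p)^{-1/p} \lesssim \alpha^{1-1/p}p^{-1/p}$, and with $\alpha = (K+1)^{-1/2}$ this is $(K+1)^{-(p-1)/(2p)} = (K+1)^{(2-p)/(4p)}(K+1)^{-1/4}$.
\end{itemize}

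The factor $(1-\alpha)^{-(p-1)/p}$ appearing in the third term is at most $2^{1/p}$ once $\alpha \leq 1/2$, i.e.\ for $K\geq 3$. Combining it with $2^{1/p}$ gives the constant $2^{(p+1)/p}$ only if we use the sharper bound $2^{1/p}$ from \cref{thm:p_bound} rather than $2$. I expect the only fiddly part to be matching this exact constant: I would first try the tangent-line bound $1-(1-\alpha)^p \geq p\alpha(1-\alpha)^{p-1}$, and fall back on adjusting the constant if it does not match.
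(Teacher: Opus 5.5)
Your proposal is correct and follows the paper's proof essentially step for step. Both use the same unrolled decomposition of $\lambda^k$, the deterministic drift bound from the $2\gamma D$ step bound, Jensen plus the martingale $p$-th moment lemma for the noise, and a mean-value lower bound on $1-(1-\alpha)^p$ for $\alpha\le 1/2$ (your Bernoulli/tangent-line version of it). Your closing worry about the constant is unfounded and in fact backwards: keeping the factor $2$ from the first estimate and using $(1-\alpha)^{-(p-1)/p}\le 2^{(p-1)/p}\le 2^{1/p}$ gives exactly $2\cdot 2^{1/p}=2^{(p+1)/p}$, which is what the paper obtains from $1-(1-\alpha)^p\ge p\alpha/2$, whereas using the sharper $2^{1/p}$ would only give a smaller constant.
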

\begin{proof}
   Observe that
\begin{equation*}
    \lambda^k = (1-\alpha)\lambda^{k-1} + \alpha (\nabla f(x^k,\xi^k)-\nabla f(x^k)) + (1-\alpha)(\nabla f(x^{k-1})-\nabla f(x^k)),
\end{equation*}
which readily implies
\begin{multline*}
     \lambda^{k} = (1-\alpha)^{k}(d^0-\nabla f(x^0)) + \sum_{i=0}^{k-1}(1-\alpha)^{k-i}(\nabla f(x^i)-\nabla f(x^{i+1}))  \\
     + \alpha \sum_{i=1}^k (1-\alpha)^{k-i}(\nabla f(x^i, \xi^i)-\nabla f(x^i)).
\end{multline*}
Using the triangle inequality we obtain:
\begin{multline} \label{eq:est_upper}
    \|\lambda^k\| \leq (1-\alpha)^{k}\|d^0-\nabla f(x^0)\| + \sum_{i=0}^{k-1}(1-\alpha)^{k-i}\|\nabla f(x^i)-\nabla f(x^{i+1})\|
    \\
     + \alpha  \left\|\sum_{i=1}^k (1-\alpha)^{k-i}(\nabla f(x^i, \xi^i)-\nabla f(x^i)) \right\|.
\end{multline}
We bound the second term in a standard way using the $L$-Lipschitz continuity of $\nabla f$, 
$$\sum_{i=0}^{k-1}(1-\alpha)^{k-i}\|\nabla f(x^i)-\nabla f(x^{i+1})\| \leq L\sum_{i=0}^{k-1}(1-\alpha)^{k-i}\|x^i-x^{i+1}\| \leq 2\gamma L D \sum_{i=0}^{k-1}(1-\alpha)^{k-i},
$$
where the second inequality follows by the bound in \cref{thm:prox_thm}. Regarding the last term we follow the analysis in \cite{hubler2024gradient} and obtain
\[
    \expec \left [\left\|\sum_{i=1}^k (1-\alpha)^{k-i}(\nabla f(x^i, \xi^i)-\nabla f(x^i)) \right\|\right] \leq 
    \expec \left [ \left \|\sum_{i=1}^k (1-\alpha)^{k-i}(\nabla f(x^i, \xi^i)-\nabla f(x^i))\right \|^p \right]^{1/p}
\]
using Jensen's inequality. We now use \cref{thm:p_bound} with $X_i = (1-\alpha)^{k-i}(\nabla f(x^i,\xi^i) - \nabla f(x^i))$, utilizing \cref{assum:stoch}, and obtain 
\begin{align*}
    \expec \left [ \left \|\sum_{i=1}^k (1-\alpha)^{k-i}(\nabla f(x^i, \xi^i)-\nabla f(x^i))\right \|^p \right]^{1/p} &\leq \left ( 2\sum_{i=1}^k (1-\alpha)^{p(k-i)} \sigma^p \right )^{1/p} \\ &\leq 2 \sigma \left ( \sum_{i=1}^k (1-\alpha)^{p(k-i)} \right )^{1/p},
\end{align*}
where we have moreover used \cref{assum:stoch} in the form of $\expec[\|\nabla f(x,\xi)- \nabla f(x)\|^p] \leq \sigma^p$. Bounding the series on the r.h.s.\ and multiplying both sides by $\alpha$, we thus reach the following bound
\begin{equation}
    \expec \left [ \alpha  \left\|\sum_{i=1}^k (1-\alpha)^{k-i}(\nabla f(x^i, \xi^i)-\nabla f(x^i)) \right\|\right] \leq 2\alpha \sigma \left (\frac{1}{1-(1-\alpha)^p}\right)^{1/p}.
\end{equation}
    Furthermore, we have that $\expec[\|d^0-\nabla f(x^0)\|]=\expec[\|\nabla f(x^0,\xi^0)-\nabla f(x^0)\|] \leq \expec[\|\nabla f(x^0,\xi^0)-\nabla f(x^0)\|^p]^{1/p} \leq \sigma$. Putting everything together,
    \begin{equation*}
        \expec[\|\lambda^k\|] \leq (1-\alpha)^k \sigma + \frac{2\gamma L D}{\alpha} + 2\alpha \sigma \left (\frac{1}{1-(1-\alpha)^p}\right)^{1/p}.
    \end{equation*}
    Regarding the second claim we first upper bound $\sum_{k=0}^K \expec[\|\lambda^k\|]$. We begin with the simple bound 
    \[
        \sum_{k=0}^K(1-\alpha)^k \sigma \leq \tfrac{\sigma}{\alpha} = \sigma (K+1)^{1/2}.
    \]
    Further, we have $\sum_{k=0}^K \tfrac{2\gamma L D}{\alpha} = 2\bar \gamma LD (K+1)^{3/4}.$ Finally, from the Mean Value Theorem for $t^p$ in the interval $[1-\alpha,1]$ we have for some $\xi \in (1-\alpha, 1)$ that $p\xi^{p-1} = \frac{1-(1-\alpha)^p}{\alpha}$. Since $\alpha = (K+1)^{-1/2}$, for $K \geq 3$ we have $\alpha \leq 1/2$ and thus $\xi > 1/2$. Therefore, since $p\in(1,2]$, $1-(1-\alpha)^p \geq p\alpha/2 $ and hence $(\tfrac{1}{1-(1-\alpha)^p})^{1/p} \leq (\tfrac{2}{p\alpha})^{1/p} = (K+1)^{1/2p}(\tfrac{2}{p})^{1/p}$. This implies $\sum_{k=0}^K 2\alpha \sigma (\tfrac{1}{1-(1-\alpha)^p})^{1/p} \leq 2^{(p+1)/p}\sigma (K+1)^{(p+1)/2p}/p^{1/p}$ and thus
    \begin{equation*}
        \frac{D}{K+1}\sum_{k=0}^K 2\alpha \sigma (\tfrac{1}{1-(1-\alpha)^p})^{1/p}\leq \frac{2^{(p+1)/p}\sigma D (K+1)^{(2-p)/4p}}{p^{1/p}(K+1)^{1/4}}.
    \end{equation*}
    Putting everything together leads to the claimed result.
\end{proof}

\subsection{Proof of \texorpdfstring{\cref{thm:main_res}}{Theorem 4.3}}

\begin{proof}  
    We start from \cref{thm:comp_desc2} and take expectation with respect to $\xi^k$
    \begin{equation*}
        \expec_{\xi^k}[F(x^{k+1})] \leq F(x^k) + 4\gamma D\expec_{\xi^k}[\|\lambda^k\|] + 6\gamma^2 D^2L - \expec_{\xi^k}[\gamma D_{\phi^*}(\nabla f(x^{k+1}), -\widetilde \nabla g(x^{k+1}))].
    \end{equation*}
    Taking total expectation and summing up the inequality from $k=0$ to $k=K$ leads us to
    \begin{align*}
        \gamma \sum_{k=0}^K \expec[D_{\phi^*}(\nabla f(x^{k+1}), -\widetilde \nabla g(x^{k+1}))] \leq F(x^0)-F_\star + 4 \gamma D \sum_{k=0}^K \expec[\|\lambda^k\|] + 6 L\gamma^2 D^2 (K + 1),
    \end{align*}
    where we also used $F(x) \geq F_\star$ for all $x \in E$. 
    Multiplying with $\tfrac{1}{\gamma(K+1)}$ and using the bound from \cref{thm:stoch_bound} we get
    \[
        \frac{1}{K+1}\sum_{k=0}^K\expec[D_{\phi^*}(\nabla f(x^{k+1}), -\widetilde \nabla g(x^{k+1}))]  \leq A
    \]
    where $A$ equals
    \[
        \frac{F(x^0)-F_\star}{\bar \gamma (K+1)^{1/4}} + \frac{6L\bar \gamma D^2}{(K+1)^{3/4}} +\frac{4D\sigma}{(K+1)^{1/2}} + \frac{8\bar\gamma LD^2}{(K+1)^{1/4}} +  \frac{2^{(p+1)/p}4\sigma D (K+1)^{(2-p)/4p}}{p^{1/p}(K+1)^{1/4}},
    \]
    which is the claimed result.
\end{proof}

\section{Analysis for the STORM estimator}
 \label{app:storm_proof}

We now proceed to bound the error $\lambda^k = d^k - \nabla f(x^k)$ when using the STORM estimator from \cite{cutkosky2019momentum}, i.e. 
\begin{equation} \label{eq:storm}
    d^k = (1-\alpha_{k})d^{k-1} + \alpha_k \nabla f(x^k, \xi^k) + (1-\alpha_k) (\nabla f(x^k,\xi^k) - \nabla f(x^{k-1},\xi^k)).
\end{equation}
Adapting the analysis to our setting, we have the following intermediate result.
\begin{lemma} \label{thm:lemma_bound_diff_storm}
    Let \cref{assum:stoch,assum:f_lipschitz,assum:fxi_lipschitz} hold true with $p=2$ and let $d^k$ be updated as in \eqref{eq:storm} with $d^0=\nabla f(x^0,\xi^0)$. Denote $\lambda^k = d^k - \nabla f(x^k)$. Then, we have the following estimate for $k \geq 1$
    \begin{equation*}
        \expec_{\xi^k}[\|\lambda^k\|^2] \leq (1-\alpha_k)^2\|\lambda^{k-1}\|^2 + 2\alpha_k^2 \sigma^2 + 2(1-\alpha_k)^2 \gamma_{k-1}^2 L^2 \|\nabla \phi^*(d^{k-1})+\nabla \phi^*(\widetilde \nabla g(x^k))\|^2,
    \end{equation*}
    where $\widetilde \nabla g(x^{k+1}) \in \partial g(x^{k+1})$. For $k = 0$, $\expec_{\xi^0}[\|\lambda^0\|]^2 \leq \sigma^2$.
\end{lemma}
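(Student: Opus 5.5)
We will follow the standard STORM recursion of \cite{cutkosky2019momentum}, adapted to the step structure of \eqref{eq:update}. First we rewrite the error recursively. Subtracting $\nabla f(x^k)$ from \eqref{eq:storm} and regrouping gives
\[
    \lambda^k = (1-\alpha_k)\lambda^{k-1} + \alpha_k\bigl(\nabla f(x^k,\xi^k)-\nabla f(x^k)\bigr) + (1-\alpha_k)\bigl(\nabla f(x^k,\xi^k)-\nabla f(x^{k-1},\xi^k) - \nabla f(x^k)+\nabla f(x^{k-1})\bigr).
\]
Denote the last two terms by $Z^k := \alpha_k(\nabla f(x^k,\xi^k)-\nabla f(x^k)) + (1-\alpha_k)W^k$, where $W^k$ is the bracketed difference. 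Conditioned on the past (so that $x^k$, $x^{k-1}$ and $\lambda^{k-1}$ are fixed, since $x^k$ depends only on $\xi^0,\dots,\xi^{k-1}$), \cref{assum:stoch} gives $\expec_{\xi^k}[Z^k]=0$.

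Next we expand the square. The cross term $2(1-\alpha_k)\langle \lambda^{k-1}, \expec_{\xi^k}[Z^k]\rangle$ vanishes, so
\[
    \expec_{\xi^k}\|\lambda^k\|^2 = (1-\alpha_k)^2\|\lambda^{k-1}\|^2 + \expec_{\xi^k}\|Z^k\|^2 .
\]
Applying $\|a+b\|^2\le 2\|a\|^2+2\|b\|^2$ yields
\[
    \expec_{\xi^k}\|Z^k\|^2 \le 2\alpha_k^2\sigma^2 + 2(1-\alpha_k)^2\expec_{\xi^k}\|W^k\|^2 .
\]
Here \cref{assum:stoch} with $p=2$ controls the first term. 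For the second term, since $W^k$ is a centered random variable, its second moment is at most that of the uncentered one: $\expec\|W^k\|^2 \le \expec\|\nabla f(x^k,\xi^k)-\nabla f(x^{k-1},\xi^k)\|^2$. By \cref{assum:fxi_lipschitz}, this is at most $L^2\|x^k-x^{k-1}\|^2$.

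Finally, \cref{thm:prox_thm} applied at iteration $k-1$ gives
\[
    x^k-x^{k-1} = -\gamma_{k-1}\bigl(\nabla\phi^*(d^{k-1})+\nabla\phi^*(\widetilde\nabla g(x^k))\bigr),
\]
which produces exactly the stated third term. (The subgradient in the statement should read $\widetilde\nabla g(x^k)$.)

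For $k=0$ we have $\lambda^0=\nabla f(x^0,\xi^0)-\nabla f(x^0)$, so Jensen's inequality together with \cref{assum:stoch} gives $\expec_{\xi^0}[\|\lambda^0\|]^2 \le \expec_{\xi^0}\|\lambda^0\|^2 \le \sigma^2$.

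The main delicate point is the measurability bookkeeping. We must check that $x^k$ and $\widetilde\nabla g(x^k)$ are determined before $\xi^k$ is drawn, so that the cross term vanishes and the bound holds conditionally. We must also check that the variance bound for the centered $W^k$ is legitimate. Both should follow since the proximal step at iteration $k-1$ uses only $d^{k-1}$.
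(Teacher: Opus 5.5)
Your proof is correct and follows essentially the same argument as the paper. Both use the same error recursion and let the cross terms with $\lambda^{k-1}$ vanish under $\expec_{\xi^k}$ by conditional unbiasedness. Your $\|a+b\|^2\le 2\|a\|^2+2\|b\|^2$ bound on $Z^k$ is the paper's Young step on the noise/$W^k$ cross term, and the remaining steps match: the centered-second-moment bound, Lipschitzness of $\nabla f(\cdot,\xi)$, and \cref{thm:prox_thm} at iteration $k-1$. You are also right that the subgradient in the statement should read $\widetilde\nabla g(x^k)$.
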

\begin{proof}
    Observe that (see also \cite[p.\,3]{cutkosky2019momentum})
    \begin{multline*}
        \lambda^k = (1-\alpha_k)\lambda^{k-1} + \alpha_k (\nabla f(x^k, \xi^k)-\nabla f(x^k)) \\ + (1-\alpha_k)(\nabla f(x^k,\xi^k) - \nabla f(x^{k-1},\xi^k)-(\nabla f(x^k)-\nabla f(x^{k-1}))).
    \end{multline*}
    and using the Pythagorean theorem we have
    \begin{align*}
        \|\lambda^k\|^2 
        &= (1-\alpha_k)^2\|\lambda^{k-1}\|^2 + \alpha_k^2\|\nabla f(x^k, \xi^k)-\nabla f(x^k)\|^2 
        \\
        &+ (1-\alpha_k)^2\|\nabla f(x^k,\xi^k) - \nabla f(x^{k-1},\xi^k)-(\nabla f(x^k)-\nabla f(x^{k-1}))\|^2
        \\
        &+\tilde{\alpha}_k \langle \nabla f(x^k,\xi^k) - \nabla f(x^{k-1},\xi^k)-(\nabla f(x^k)-\nabla f(x^{k-1})),\nabla f(x^k, \xi^k)-\nabla f(x^k) 
        \rangle
        \\
        &+\tilde{\alpha}_k \langle \lambda^{k-1},\nabla f(x^k, \xi^k)-\nabla f(x^k) \rangle 
        \\
        &+ 2(1-\alpha_k)^2 \langle \lambda^{k-1},\nabla f(x^k,\xi^k)- \nabla f(x^{k-1},\xi^k)-(\nabla f(x^k)-\nabla f(x^{k-1}))\rangle 
    \end{align*}
    where $\tilde{\alpha}_k = 2(1-\alpha_k)\alpha_k$.
    
    Now, by using Young's inequality for the fourth term, taking expectation with respect to $\xi^k$ and using \cite[Lemma 3]{cutkosky2019momentum}, i.e., that the expectation of the final two terms is $0$ we get
    \begin{multline*}
        \expec_{\xi^k}[\|\lambda^k\|^2] \leq (1-\alpha_k)^2\|\lambda^{k-1}\|^2 + 2\alpha_k^2 \expec_{\xi^k}[\|\nabla f(x^k, \xi^k)-\nabla f(x^k)\|^2] 
        \\
        + 2(1-\alpha_k)^2\expec_{\xi^k}[\|\nabla f(x^k,\xi^k) - \nabla f(x^{k-1},\xi^k)-(\nabla f(x^k)-\nabla f(x^{k-1}))\|^2]
    \end{multline*}
    For the last term in this inequality, we moreover have
    \begin{align*}
        \expec_{\xi^k}[\|\nabla f(x^k,\xi^k) &- \nabla f(x^{k-1},\xi^k)-(\nabla f(x^k)-\nabla f(x^{k-1}))\|^2] 
        \\
        &= \expec_{\xi^k}[\|\nabla f(x^k,\xi^k) - \nabla f(x^{k-1},\xi^k)\|^2] + \|\nabla f(x^k)-\nabla f(x^{k-1})\|^2
        \\
        &\mspace{100mu} -2\expec_{\xi^k}[\langle \nabla f(x^k,\xi^k) - \nabla f(x^{k-1},\xi^k),\nabla f(x^k)-\nabla f(x^{k-1}) \rangle]
        \\
        &\leq \expec_{\xi^k}[\|\nabla f(x^k,\xi^k) - \nabla f(x^{k-1},\xi^k)\|^2],
    \end{align*}
    where we have used the fact that the stochastic estimator is unbiased. Since $\nabla f(\cdot, \xi^k)$ is Lipschitz continuous, the last quantity in the inequality above can be upper bounded by $L^2 \|x^{k}-x^{k-1}\|^2$. Therefore, we have that
    \begin{equation*}
        \expec_{\xi^k}[\|\lambda^k\|^2] \leq (1-\alpha_k)^2\|\lambda^{k-1}\|^2 + 2\alpha_k^2 \sigma^2 + 2(1-\alpha_k)^2 \gamma_{k-1}^2 L^2 \|\nabla \phi^*(d^{k-1})+\nabla \phi^*(\widetilde \nabla g(x^k))\|^2,
    \end{equation*}
    where we also used \Cref{thm:prox_thm}.
\end{proof}
\subsection{Proof of \texorpdfstring{\cref{thm:conv_storm}}{Theorem 4.5}}
\begin{proof}
By Young's inequality, we have that
\[
    4\gamma_k D \|\lambda^k\| \leq \tfrac{L}{2\gamma_k^{1/2}} (4\gamma_k D)^2 + \tfrac{\gamma_k^{1/2}}{2L}\|\lambda^k\|^2 = 8\gamma_k^{3/2}D^2L + \tfrac{\gamma_k^{1/2}}{2L}\|\lambda^k\|^2
\]
which we substitute into \eqref{eq:c2_bound} to obtain
 \begin{equation} \label{eq:storm_est_finite_dom}
    F(x^{k+1}) \leq F(x^k) + 8\gamma_k^{3/2} D^2 L + \tfrac{\gamma_k^{1/2}}{2L}\|\lambda^k\|^2 + 6\gamma_k^2 D^2L - \gamma_k D_{\phi^*}(\nabla f(x^{k+1}), -\widetilde \nabla g(x^{k+1})).
\end{equation}
Taking total expectation in \cref{thm:lemma_bound_diff_storm}, and noting that $\|\nabla \phi^*(d^{k-1})+\nabla \phi^*(\widetilde \nabla g(x^{k+1}))\|^2 \leq 4D^2$ we have that
\[
    \tfrac{1}{L}(\tfrac{1}{\gamma_k^{1/2}}\expec[\|\lambda^k\|^2]-\tfrac{1}{\gamma_{k-1}^{1/2}}\expec[\|\lambda^{k-1}\|^2]) \leq \tfrac{1}{L}(\tfrac{(1-\alpha_k)^2}{\gamma_k^{1/2}}-\tfrac{1}{\gamma_{k-1}^{1/2}})\expec[\|\lambda^{k-1}\|^2]   +\tfrac{2 \alpha_k^2}{\gamma_k^{1/2}L}\sigma^2 + \tfrac{8(1-\alpha_k)^2 LD^2 \gamma_{k-1}^2}{\gamma_k^{1/2}}.
\]
Now note that since $\gamma_k = (k+1)^{-2/3}$ and $\alpha_k = \gamma_k$ we have $\tfrac{1}{L}(\tfrac{(1-\alpha_k)^2}{\gamma_k^{1/2}}-\tfrac{1}{\gamma_{k-1}^{1/2}}) \leq -\tfrac{1}{2L}k^{-1/3} = -\tfrac{1}{2L}\gamma_{k-1}^{1/2}$ and $(1-\alpha_k)^2 (\tfrac{\gamma_{k-1}}{\gamma_k})^{1/2} \leq 1$. Therefore,
\begin{equation} \label{eq:storm_est_finite_dom2}
    \tfrac{1}{L}\tfrac{1}{\gamma_k^{1/2}}\expec[\|\lambda^k\|^2] \leq \tfrac{1}{L}(\tfrac{1}{\gamma_{k-1}^{1/2}} -  \tfrac{\gamma_{k-1}^{1/2}}{2})\expec[\|\lambda^{k-1}\|^2]   +\tfrac{2 \alpha_k^2}{\gamma_k^{1/2}L}\sigma^2 + 8LD^2\gamma_{k-1}^{3/2}.
\end{equation}
Taking total expectation in \eqref{eq:storm_est_finite_dom} and summing with \eqref{eq:storm_est_finite_dom2} for $k \geq 1$ we obtain 
\begin{align*}
    \expec[F(x^{k+1})] + \tfrac{1}{L}(\tfrac{1}{\gamma_k^{1/2}}-\tfrac{\gamma_k^{1/2}}{2})\expec[\|\lambda^k\|^2]
    &\leq \expec[F(x^k)] + \tfrac{1}{L}(\tfrac{1}{\gamma_{k-1}^{1/2}}-\tfrac{\gamma_{k-1}^{1/2}}{2})\expec[\|\lambda^{k-1}\|^2] + \tfrac{2 \alpha_k^{3/2}}{L}\sigma^2 \\
    &\mspace{20mu}  + 8LD^2 \gamma_{k-1}^{3/2} -\gamma_k\expec[D_{\phi^*}(\nabla f(x^{k+1}), -\widetilde \nabla g(x^{k+1}))]
    \\
    &\mspace{20mu} + 8\gamma_k^{3/2} D^2 L + 6\gamma_k^2 D^2L.
\end{align*}
Summing up the inequality above from $k=1$ to $k=K$, and using standard integral upper bounds we get
\begin{align*}
    \sum_{k=1}^K\gamma_k\expec[D_{\phi^*}(\nabla f(x^{k+1}), -\widetilde \nabla g(x^{k+1}))] 
    &\leq \expec[F(x^1)-F(x^{K+1})] + \tfrac{1}{2L}\sigma^2 + \tfrac{2}{L}\ln(K+1)\sigma^2 
    \\
    & + 8LD^2(1+\ln(K))  + 8 D^2L \ln(K+1) + 18 LD^2.
\end{align*}
Summing now the above with \eqref{eq:storm_est_finite_dom} for $k=0$ we obtain:
\[
    \sum_{k=0}^K\gamma_k\expec[D_{\phi^*}(\nabla f(x^{k+1}), -\widetilde \nabla g(x^{k+1}))] \leq A
\]
where $A$ equals
\[
     F(x^0)-F_{\star}+
    \tfrac{1}{L}(1+2\ln(K+1))\sigma^2 + (40+16\ln(K+1))LD^2.
\]
Clearly $\gamma_K \leq \gamma_k$ and thus multiplying with $\tfrac{1}{\gamma_K(K+1)} = \tfrac{1}{(K+1)^{1/3}}$ leads to the claimed result.
\end{proof}

\section{Analysis of Polar Express} \label{app:polar_full}

\subsection{Polar Express reference function} \label{app:polar_discussion}
Consider a family of nonlinear preconditioners $(h^*_{\kappa})'(x) = \frac{x}{(\varepsilon^{\kappa} + |x|^{\kappa})^{1/\kappa}}$ with $\kappa \geq 1$. These arise from reference functions of the form $h(x) = \varepsilon \frac12 |x|^2 {}_2F_1(\frac{2}{\kappa}, \frac{1}{\kappa}; 1 + \frac{2}{\kappa}; |x|^\kappa)$, where ${}_2F_1$ is the ordinary hypergeometric function. It can be shown that these reference functions satisfy \Cref{assum:sc} with strong convexity parameter $\mu = \varepsilon$. Note that for $\kappa=1$, the preconditioner from \Cref{example:norm_grad} is recovered while $\kappa=2$ is related to Adagrad in light of \cite[Example 1.5]{oikonomidis2025nonlinearlypreconditionedgradientmethods}.

Although the Polar Express method \cite{amsel2025polar} is nonmonotone and therefore does not satisfy the assumptions of our framework, it approximates $(h_4^*)'$ more closely than the actual sign function, as illustrated in \Cref{fig:pe_approx_app}. In practice, a Frobenius normalization step is performed prior to applying this polynomial to ensure convergence. In the following sections, we show how our analysis can deal with this required input normalization.

\begin{figure}[thbp]
    \centering
    \begin{subfigure}[b]{0.32\textwidth}
        \centering
        \includegraphics[width=\textwidth]{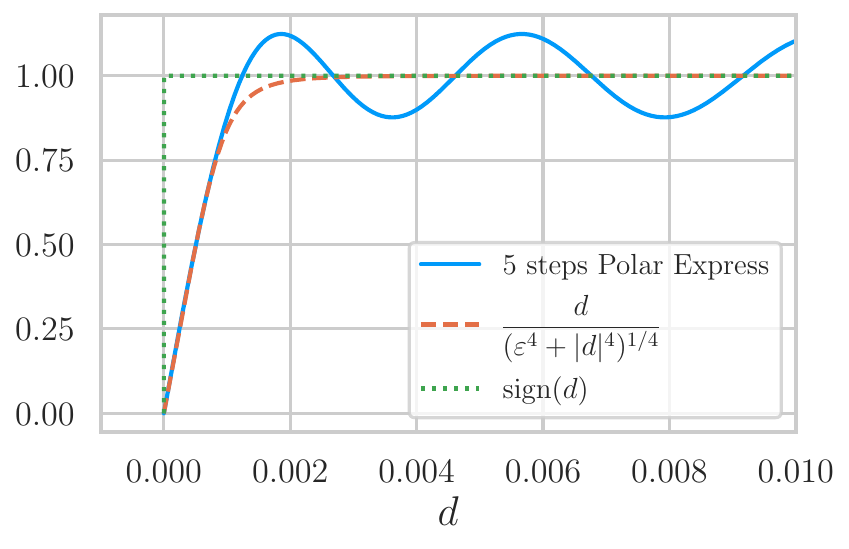}
        \caption{$\varepsilon=10^{-3}$}
    \end{subfigure} \hfill
    \begin{subfigure}[b]{0.32\textwidth}
        \centering
        \includegraphics[width=\textwidth]{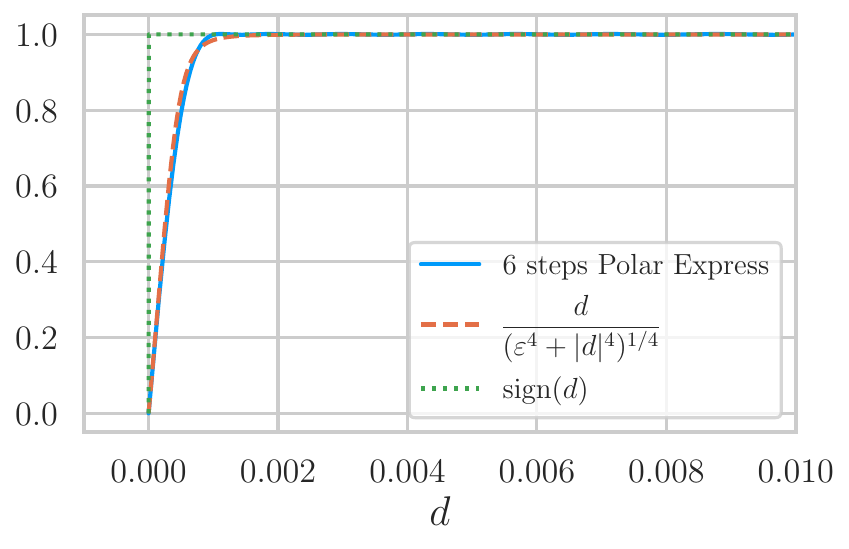}
        \caption{$\varepsilon=5\times 10^{-4}$}
    \end{subfigure} \hfill
    \begin{subfigure}[b]{0.32\textwidth}
        \centering
        \includegraphics[width=\textwidth]{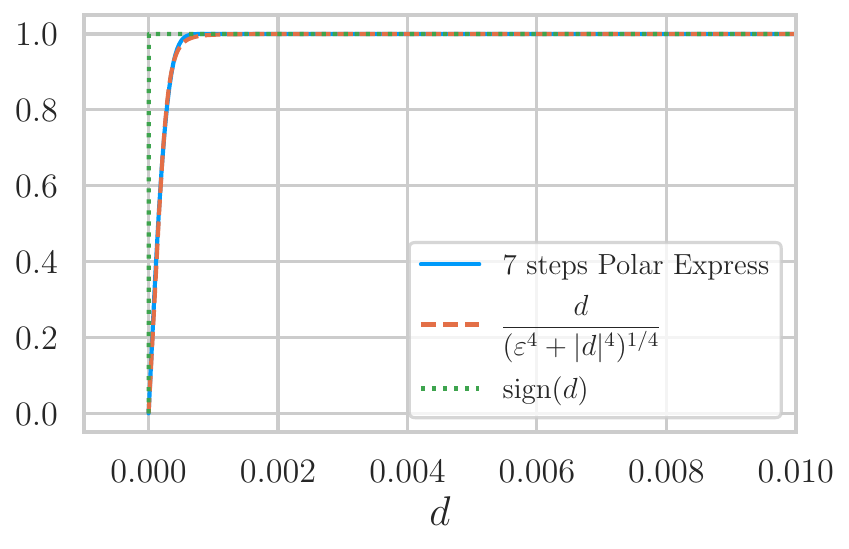}
        \caption{$\varepsilon=3\times 10^{-4}$}
    \end{subfigure}
    \caption{The Polar Express fits more closely to $(h^*_4)'$ than to the actual sign, assuming that the input is normalized to have spectral norm less than or equal to 1.}
    \label{fig:pe_approx_app}
\end{figure}
\subsection{Deterministic setting} \label{app:inexact_update}
In this subsection we show how the discussion above can be incorporated into our analysis. We first consider the vector setting. Therefore, assume that $E = \bR^n$ and $\phi$ is the separable reference function $\phi(x) = \sum_{i=1}^n h_\kappa(x_i)$, thus leading to the update
\begin{equation*}
    x^{k+1} = x^k - \gamma \nabla \phi^* \left (\frac{\nabla f(x^k)}{\|\nabla f(x^k)\|} \right),
\end{equation*}
where we take $\nabla f(x^k) /\|\nabla f(x^k)\| = 0$ if $\nabla f(x^k) = 0$. Assuming that $\nabla f(x^k) \neq 0$ for all $i \in \{0, 1,\dots, k\}$ because otherwise the algorithm would have stopped, we have the following from the Lipschitz smoothness inequality
\begin{align*}
    f(x^{k+1}) \leq f(x^k) - \gamma \left\langle \nabla f(x^k),\nabla \phi^* \left (\frac{\nabla f(x^k)}{\|\nabla f(x^k)\|} \right) \right\rangle + \frac{L\gamma^2 D^2}{2}.
\end{align*}
Note now that for $z_i = \nabla_i f(x^k)/\|\nabla f(x^k)\|$ we have 
\begin{align*}
    \left\langle \nabla f(x^k),\nabla \phi^* \left (\frac{\nabla f(x^k)}{\|\nabla f(x^k)\|} \right) \right\rangle= \|\nabla f(x^k)\|\sum_{i=1}^n \frac{z_i^2}{(\varepsilon^\kappa + |z_i|^\kappa)^{1/\kappa}}.
\end{align*}
and $(\varepsilon^\kappa + |z_i|^\kappa)^{1/\kappa} \leq \varepsilon + |z_i|$. This implies
\begin{align*}
    \left\langle \nabla f(x^k),\nabla \phi^* \left (\frac{\nabla f(x^k)}{\|\nabla f(x^k)\|} \right) \right\rangle
    &\geq \sum_{i=1}^n \frac{|\nabla_i f(x^k)|^2}{\varepsilon\|\nabla f(x^k)\| + |\nabla_i f(x^k)|}.
\end{align*}
Since $\varepsilon\|\nabla f(x^k)\| + |\nabla_i f(x^k)| \leq (1+\varepsilon) \|\nabla f(x^k)\|$, we find that
\[
    \left\langle \nabla f(x^k),\nabla \phi^* \left (\frac{\nabla f(x^k)}{\|\nabla f(x^k)\|} \right) \right\rangle \geq \frac{\|\nabla f(x^k)\|}{1+\varepsilon}.
\]
Choosing thus $\gamma$ small enough (like $(K+1)^{-1/2}$) we obtain standard convergence rates, i.e.,
\begin{equation*}
    \min_{k \in \{0,1,\dots, K\}} \|\nabla f(x^k)\| \leq (1+\varepsilon)\frac{2(f(x^0)-f_{\star})+ LD^2}{2\sqrt{K+1}}
\end{equation*}
Now, assume that $E=\bR^{m\times n}$ and denote $\nabla f(X^k) = U\Diag(s){V}^\top$ for an SVD of $\nabla f(X^k)$. Further, let $\Phi$ be the spectral anisotropic reference function associated to $h$. Then, upon using \Cref{cor:spectral_ref}, we have that
\[
    \left\langle \nabla f(X^k), \nabla \Phi^* \left(\frac{\nabla f(X^k)}{\|\nabla f(X^k)\|}\right) \right\rangle = \left\langle U\Diag(s){V}^\top, U \Diag({h^*}'(z_1), \ldots, {h^*}'(z_q)){V}^\top\right\rangle
\]
where $q = \min(m,n)$ and $z_i = s_i / \|\nabla f(X^k)\|$. Now, using real orthogonality of $U$ and $V$, and the cyclic property of the trace, we obtain
\[
    \left\langle \nabla f(X^k), \nabla \Phi^* \left(\frac{\nabla f(X^k)}{\|\nabla f(X^k)\|}\right) \right\rangle = \trace(\Diag(s)^\top \Diag({h^*}'(z_1), \ldots, {h^*}'(z_q))) = \sum_{i=1}^q s_i {h^*}'(z_i).
\]
Then, by the same calculations as in the vector setting, and using $\|\nabla f(X^k)\| = \sqrt{\sum_{i=1}^q s_i^2} = \|s\|$, we conclude that
\[
     \left\langle \nabla f(X^k), \nabla \Phi^* \left(\frac{\nabla f(X^k)}{\|\nabla f(X^k)\|}\right) \right\rangle  \geq \frac{\|\nabla f(X^k)\|}{1 + \varepsilon}.
\]

\subsection{Stochastic setting} \label{app:stoch_inexact}
In this subsection we analyze the convergence of the scheme described in \cref{app:inexact_update}, tailored to the stochastic setting. To that aim we assume that $E = \bR^n$ to ease notation, since the analysis in the matrix setting follows similarly as in \cref{app:inexact_update}. Moreover $g$ is identically zero, $f_\star = \inf_{x \in E}f(x) > -\infty$, $(h_\kappa^*)'(y) = \frac{y}{(\varepsilon^\kappa + |y|^\kappa)^{1/\kappa}}$ and $\phi(x) = \sum_{i=1}^n h_\kappa(x_i)$. We thus consider the following iterate
\begin{equation}
    x^{k+1} = x^k - \gamma \nabla \phi^*\left (\frac{d^k}{\|d^k\|+\epsilon}\right ),
\end{equation}
where $\epsilon > 0$ is chosen similarly to \cite{amsel2025polar}. We highlight the difference between $\varepsilon$ being an intrinsic part of the preconditioner $\nabla \phi^*$ and $\epsilon$ is part of the pre-normalization step that ensures the polynomial iteration does not diverge. In fact, the whole update can be seen as $x^{k+1} = x^k - \gamma (\nabla \phi^* \circ \nabla \varphi^*)(d^k)$ for $\varphi = -\epsilon(\ln(1-\|\cdot\|) + \|\cdot\|)$ similarly to \cref{example:norm_grad}.

We start the proof similarly to our analysis so far by using the Lipschitz smoothness inequality and adding and subtracting $d^k$:
\begin{equation} \label{eq:lipschitz_desc_approx}
    f(x^{k+1}) \leq f(x^k) + \gamma D \|\lambda^k\| - \gamma \left\langle d^k, \nabla \phi^* \left(\frac{d^k}{\|d^k\|+\epsilon} \right)\right\rangle + \frac{L\gamma^2 D^2}{2},
\end{equation}
where the bound follows by the fact that $\|\nabla \phi^*\| \leq D$. We have the following
\[
    \left\langle d^k, \nabla \phi^*\left(\frac{d^k}{\|d^k\|+\epsilon}\right)\right\rangle 
    = \sum_{i=1}^n\frac{ \tfrac{|d_i^k|^2}{(\|d^k\| + \epsilon)}}{(\varepsilon^\kappa + (\tfrac{|d_i^k|}{\|d^k\|+\epsilon})^\kappa)^{1/\kappa}} \geq   \sum_{i=1}^n \frac{ \tfrac{|d_i^k|^2}{(\|d^k\| + \epsilon)}}{\varepsilon + \tfrac{|d_i^k|}{\|d^k\|+\epsilon}} = \sum_{i=1}^n\frac{|d_i^k|^2}{\varepsilon(\|d^k\| + \epsilon) + |d_i^k|}
\]
where the inequality follows by $(\varepsilon^\kappa + (\tfrac{|d_i^k|}{\|d^k\|+\epsilon})^\kappa)^{1/\kappa} \leq \varepsilon + \tfrac{|d_i^k|}{\|d^k\|+\epsilon}$. Then, since  $|d_i^k| \leq \|d^k\|$, we have that
\[
     \left\langle d^k, \nabla \phi^*\left(\frac{d^k}{\|d^k\|+\epsilon}\right)\right\rangle  \geq \sum_{i=1}^n\frac{|d_i^k|^2}{(\varepsilon+1)\|d^k\| + \epsilon \varepsilon} = \frac{\|d^k\|^2}{(\varepsilon+1)\|d^k\| + \epsilon \varepsilon} =: r(\|d^k\|).
\]
The function $r$ defined by $r(t)= \tfrac{t^2}{(\varepsilon + 1)|t|+\epsilon \varepsilon}$ is $1$-Lipschitz continuous and we thus have $r(\|d^k\|) \geq r(\|\nabla f(x^k)\|) - \|d^k-\nabla f(x^k)\|$. Putting this result back into \eqref{eq:lipschitz_desc_approx} we then have
\begin{equation*}
    f(x^{k+1}) \leq f(x^k) + \gamma (D+1)\|\lambda^k\| + \frac{L\gamma^2 D^2}{2} - \gamma r(\|\nabla f(x^k)\|).
\end{equation*}
Therefore, taking total expectation we are led to
\begin{equation*}
    \gamma \expec[r(\|\nabla f(x^k)\|] \leq \expec[f(x^k) - f(x^{k+1})] + \gamma (D+1)\expec[\|\lambda^k\|] + \frac{L\gamma^2 D^2}{2}.
\end{equation*}
Now note that $r$ is a convex function and thus from Jensen's inequality $\expec[r(\|\nabla f(x^k)\|)] \geq r(\expec[\|\nabla f(x^k)\|])$. Summing the inequality from $k=0$ to $K$ and dividing by $\gamma$ we get
\begin{equation*}
    \sum_{k=0}^K r(\expec[\|\nabla f(x^k)\|]) \leq \frac{f(x^0)-f_{\star}}{\gamma} + \frac{L\gamma D^2 (K+1)}{2} + (D+1) \sum_{k=0}^K \expec[\|\lambda^k\|].
\end{equation*}
Now note again that since $r$ is convex, from Jensen's inequality applied to the sum we have
\begin{align*}
    \sum_{k=0}^K r(\expec[\|\nabla f(x^k)\|]) 
    &\geq (K+1) r\left( \sum_{k=0}^K \frac{\expec[\|\nabla f(x^k)\|]}{K+1}\right) \\
     &= \frac{(\sum_{k=0}^K\expec[\|\nabla f(x^k)\|])^2}{(1+\varepsilon)\sum_{k=0}^K\expec[\|\nabla f(x^k)\|] + \epsilon \varepsilon(K+1)}
\end{align*}
Dividing by $K+1$, and using $\gamma = (K+1)^{-3/4}$ and  \cref{thm:stoch_bound} also accounting for the fact that $\|x^{k+1}-x^k\| \leq \gamma D$ in the unconstrained setting, we then obtain
\begin{equation*}
    \frac{1}{K+1}\frac{(\sum_{k=0}^K\expec[\|\nabla f(x^k)\|])^2}{(1+\varepsilon)\sum_{k=0}^K\expec[\|\nabla f(x^k)\|] + \epsilon \varepsilon(K+1)} 
   \leq A
\end{equation*}
where $A$ equals
\[
   \frac{f(x^0)-f_{\star}}{(K+1)^{1/4}} + \frac{LD^2}{2(K+1)^{3/4}} + (D+1)\left[\frac{\sigma}{(K+1)^{1/2}} + \frac{2\bar\gamma LD}{(K+1)^{1/4}} +  \frac{2^{(p+1)/p}\sigma (K+1)^{(2-p)/4p}}{p^{1/p}(K+1)^{1/4}}\right].
\]
This is a quadratic inequality in $t := \sum_{k=0}^K\expec[\|\nabla f(x^k)\|]$ which implies
\begin{equation*}
    t \leq \frac{(K+1)(1+\varepsilon)A + \sqrt{((K+1)(1+\varepsilon)A)^2 + 4A(K+1)^2 \epsilon \varepsilon}}{2}.
\end{equation*}
Using the standard bound $\sqrt{a + b} \leq \sqrt{a} + \sqrt{b}$ and dividing by $K+1$ we get
\begin{equation*}
    \frac{1}{K+1}\sum_{k=0}^K\expec[\|\nabla f(x^k)\|] \leq (1+\varepsilon)A + \sqrt{A \epsilon \varepsilon}.
\end{equation*}
Note that since $A \leq O((K+1)^{-\min\{\tfrac{1}{4},\tfrac{p-1}{2p}\}})$, the result follows for $\epsilon \leq (K+1)^{-\min\{\tfrac{1}{4},\tfrac{p-1}{2p}\}}$.

\section{Experiments} \label{app:experiments}
Although constraints are ubiquitous in machine learning and thus the need for geometry-aware adaptive optimizers is clear, in this section we conduct some experiments that showcase the effect of weight constraints in neural network training. We remark that the scope of this section is purely to indicate the potential benefit of weight constraints and not to compare optimized algorithms. Efficient implementations and tuning strategies of the methods we describe can further be explored by practitioners.
\subsection{Modular arithmetic}
Grokking is a phenomenon in training neural networks where generalization occurs long after achieving near-zero training error \cite{power2022grokking}. This transition has been most often encountered when training for algorithmic tasks such as learning modular division. We train a small transformer architecture with modernizations (RoPE, RMSNorm, SiLU, $\ldots$), following the implementation of \url{https://github.com/atveit/torch_grokking}. The experiments were conducted on an internal CPU cluster, and the results are shown in \cref{fig:grokking}. We observe that grokking is accelerated when incorporating aggressive weight decay in AdamW, which is consistent with the findings in \cite{power2022grokking}. Similarly, enforcing spectral norm constraints speeds up generalization and allows for larger parameter ranges.
\begin{figure}[htbp]
    \centering
    \begin{subfigure}[b]{0.32\textwidth}
    \includegraphics[width=\textwidth]{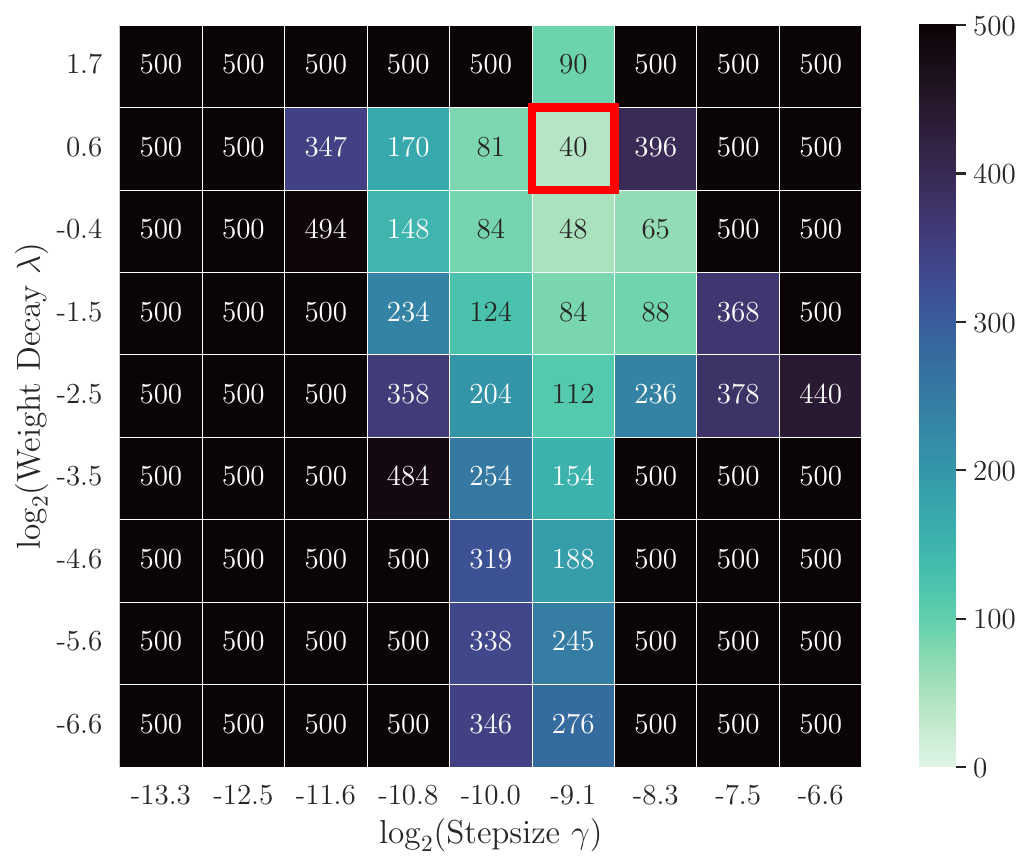}
    \end{subfigure} 
    \hfill 
     \begin{subfigure}[b]{0.32\textwidth}
    \includegraphics[width=\textwidth]{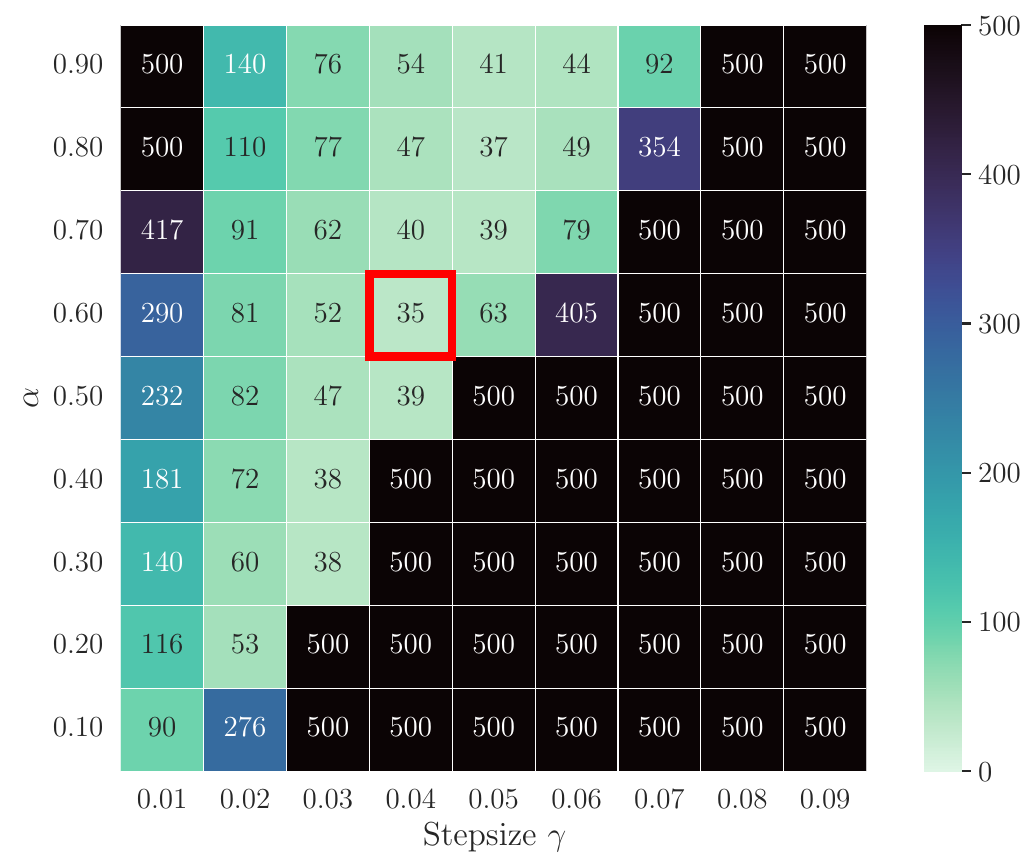}
    \end{subfigure} \hfill
    \begin{subfigure}[b]{0.32\textwidth}
    \includegraphics[width=\textwidth]{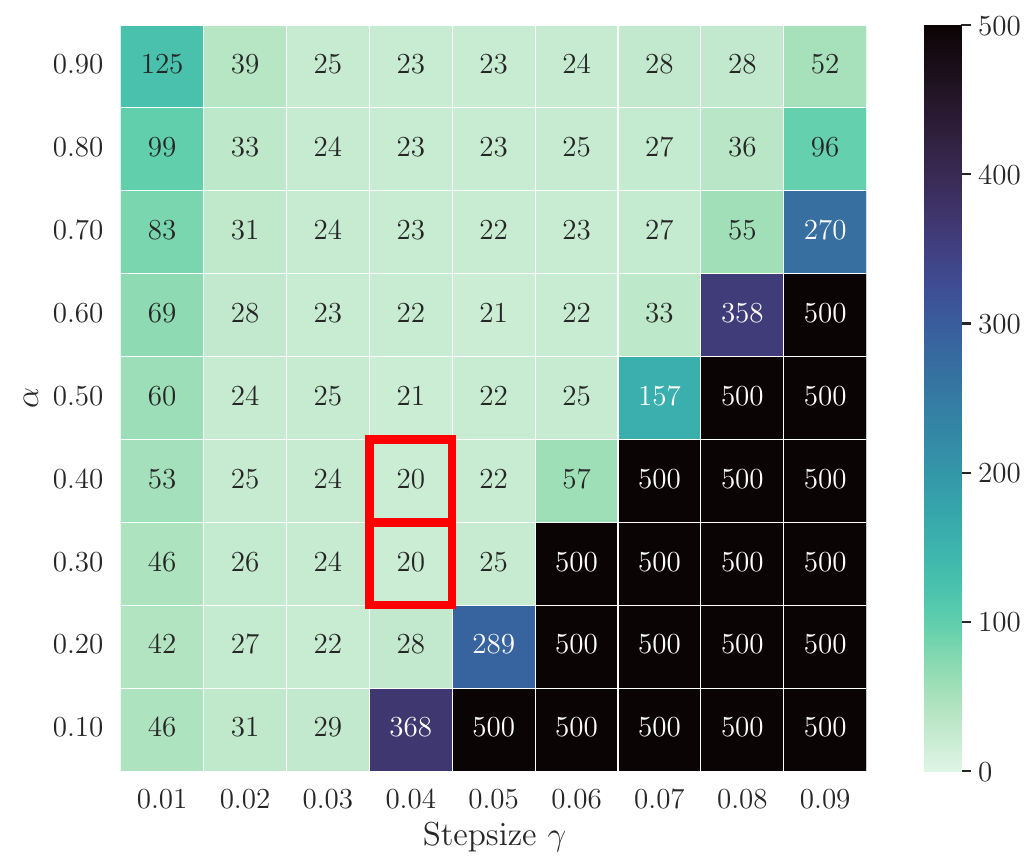} 
    \end{subfigure}
    \caption{Median number of epochs to 95\% validation accuracy for training a transformer on the \texttt{DivMod97} task across 10 different seeds and with a maximum number of epochs set to 500. (left) AdamW (middle) Unconstrained Scion \cite[Recommendation 3.1]{pethick2025trainingdeeplearningmodels} with $\rho_l=1$, $\rho_L=10$ (following the notation of \cite{pethick2025trainingdeeplearningmodels}). (right)  Unconstrained Scion with singular value clipping step on top of the spectral layers. The radius of the weight constraint was set to the same as the radius of the corresponding lmo for the gradient preconditioning. No weight constraint was applied to the embedding layers.} 
    \label{fig:grokking}
\end{figure}

\subsection{CIFAR10}
We adopt the same experimental setup as in \cite{pethick2025trainingdeeplearningmodels} and train a small convolutional network on the CIFAR10 dataset \cite{krizhevsky2009learning}. We use the standard Polyak momentum of \cref{thm:main_res}. These experiments were conducted on an NVIDIA A100 GPU. We compare three optimizers in \Cref{fig:cifar_airbench}: Unconstrained Scion \cite[Recommendation 3.1]{pethick2025trainingdeeplearningmodels}, Unconstrained Scion with singular value clipping on top (projection onto the spectral ball) and Unconstrained Scion with Euclidean projection onto the Frobenius ball as a heuristic that approximates the anisotropic projection without requiring an SVD. The results are averaged over 50 runs. It can be seen that in this setup weight constraints lead to better performance similarly to \cite[Figure 10]{pethick2025trainingdeeplearningmodels}.
\begin{figure}[hbpt]
   \centering
    \begin{subfigure}[b]{0.32\textwidth}
    \includegraphics[width=\textwidth]{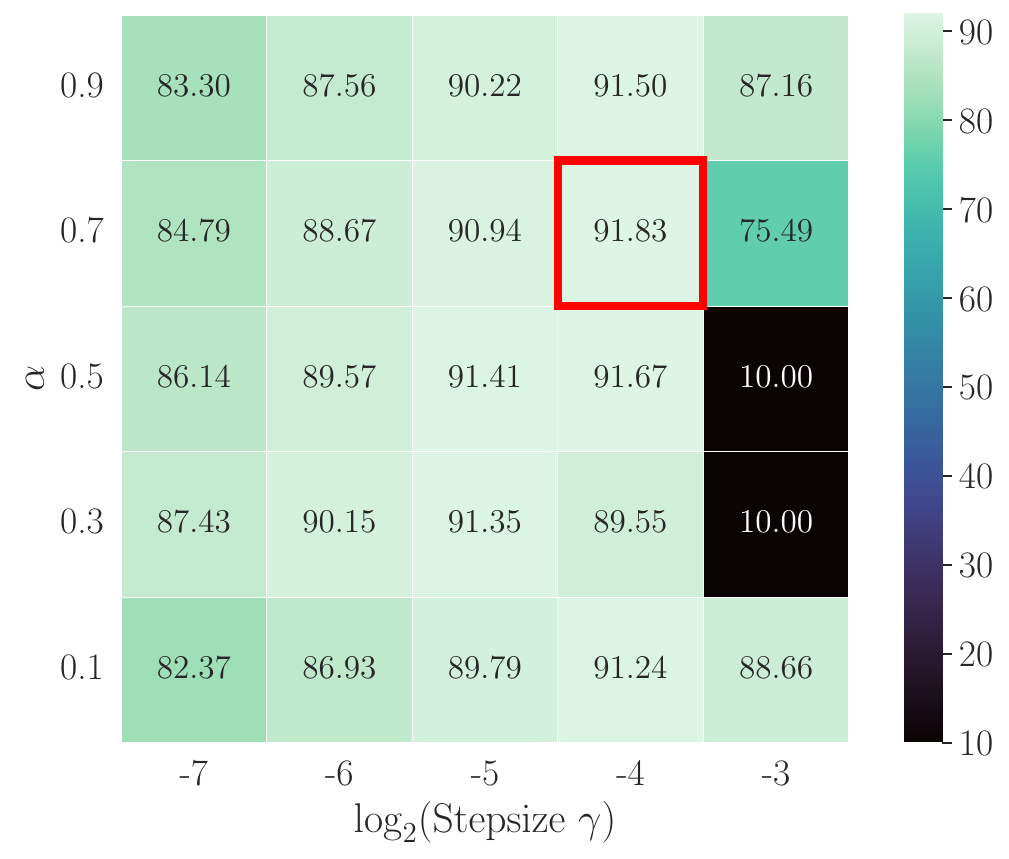}
    \end{subfigure} 
    \hfill 
     \begin{subfigure}[b]{0.32\textwidth}
    \includegraphics[width=\textwidth]{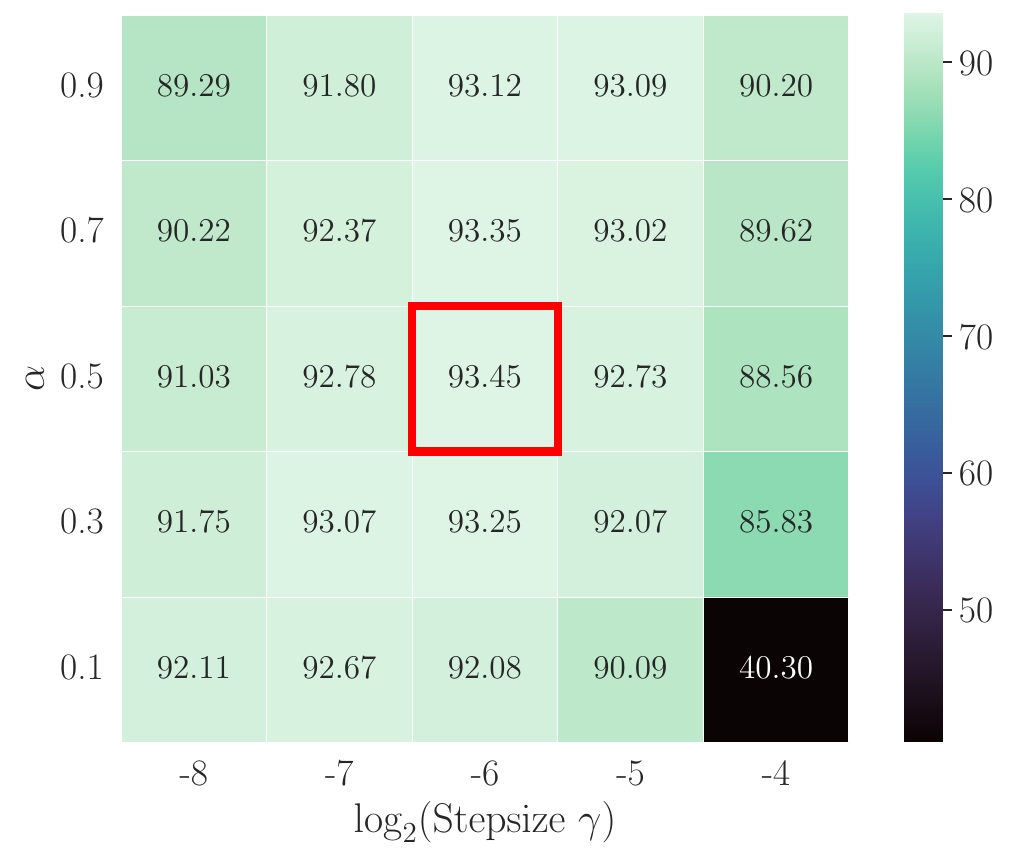}
    \end{subfigure} \hfill
    \begin{subfigure}[b]{0.32\textwidth}
    \includegraphics[width=\textwidth]{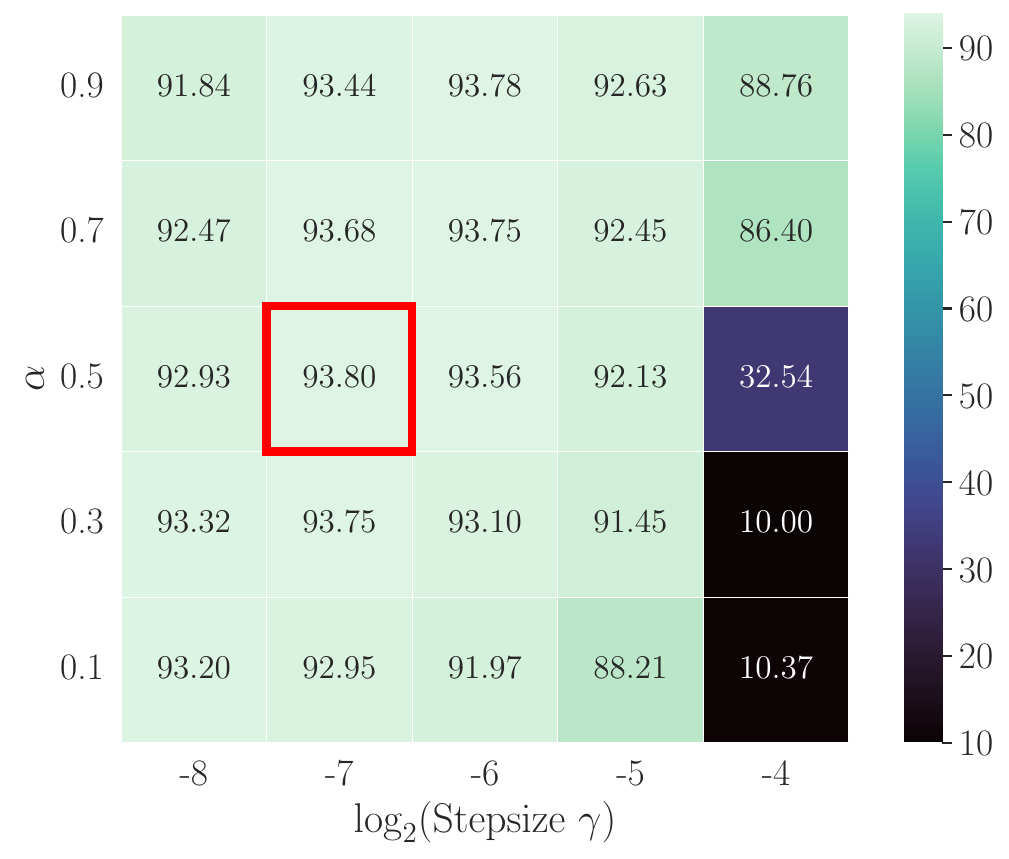} 
    \end{subfigure}
    \caption{Test accuracy for CNN trained on the CIFAR10 dataset. The radii of the lmos in the forward steps were set according to \url{https://github.com/LIONS-EPFL/scion}, i.e., $\rho_l=8, \rho_L=128$ (following the notation of \cite{pethick2025trainingdeeplearningmodels}). (left) Unconstrained Scion (middle) Unconstrained Scion with singular value clipping on top of the spectral layers. The radius of the weight constraint was set to the same radius as the lmo. (right) Unconstrained Scion with Frobenius ball constraint on top of the spectral layers, and the radius of the weight constraint was set to the same radius as the lmo.} 
    \label{fig:cifar_airbench}
\end{figure}

\subsection{NanoGPT}
We build on the code of \cite{pethick2025trainingdeeplearningmodels} and \url{https://github.com/kellerjordan/modded-nanogpt} to train on the NanoGPT benchmark with 124M parameters. Training was performed on 2 NVIDIA H100 GPUs. Instead of the usual 5100 iterations, we train for 10200 iterations so the differences are more pronounced. As a baseline, we consider unconstrained Scion, where we used the same parameters as \cite[Table 7]{pethick2025trainingdeeplearningmodels} and further swept the initial learning rate over $\{0.00018, 0.00022, 0.00026, 0.00032, 0.00036\}$, of which $0.00032$ was the best. We kept the same learning rate schedule with 2900 warmdown iterations. Moreover, we also replaced the matrix sign with the Polar Express \cite{amsel2025polar}, which gave better performance.

For our method, we apply a Frobenius ball constraint to the spectral layers with a radius equal to half of the corresponding lmo. The forward step uses the one from \cref{ex:muon_fwd} and the backward step was computed using the procedure described in \cref{sec:aniso_ball}. Exact SVDs have been used so it is exactly covered by the theory, though it leads to increased computational costs. The best initial learning rate was $0.00018$. The results are shown in \cref{fig:nanogpt_exp}.

\begin{figure}[hbpt]
    \centering
    \includegraphics[width=0.95\linewidth]{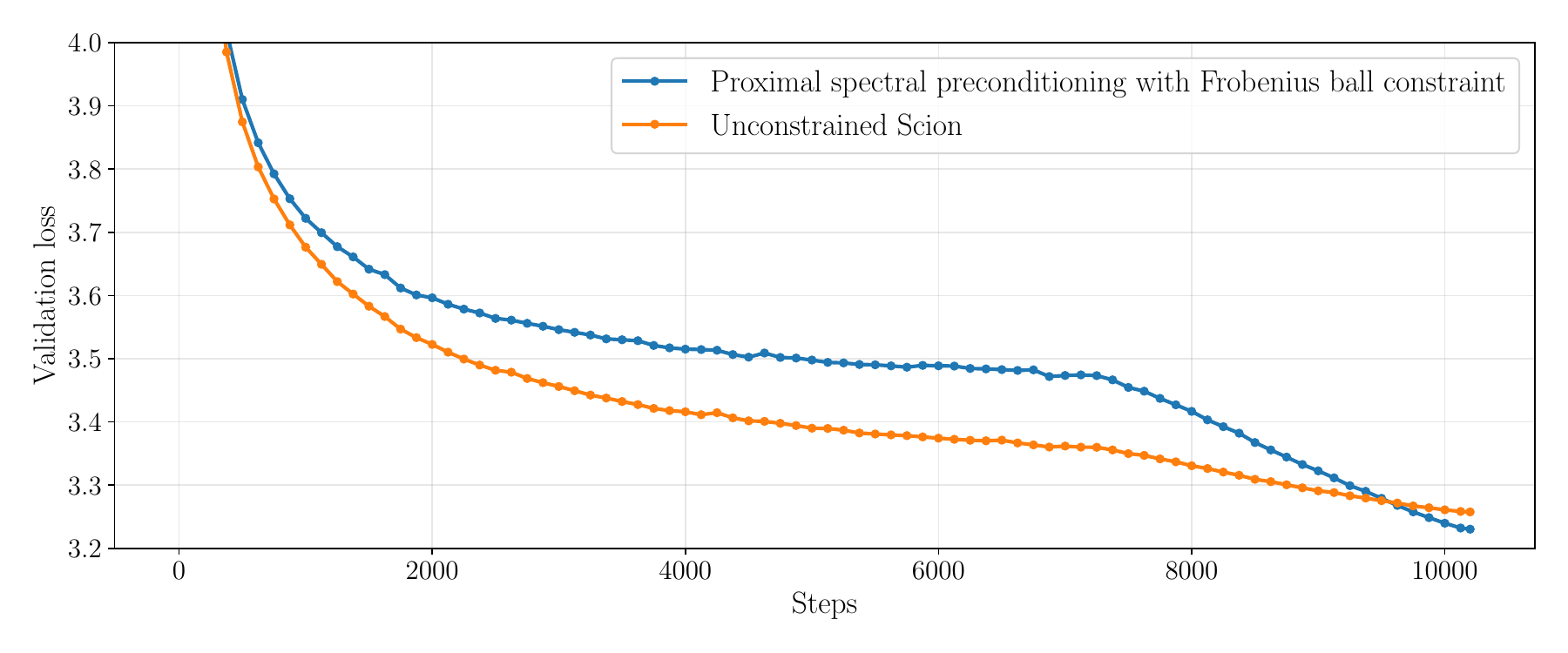}
    \caption{Comparison unconstrained Scion with proximal spectral preconditioning using a Frobenius ball constraint. The behavior is similar to Scion (the constrained version of uScion) that is covered by our theory, see \cref{rem:scion_conn}, as in \cite[Figure 9]{pethick2025trainingdeeplearningmodels}. This is expected since both methods incorporate weight norm constraints.}
    \label{fig:nanogpt_exp}
\end{figure}

\end{document}